\newcommand{\eps}{\varepsilon}
\newcommand{\N}{\mathbb N}
\newcommand{\Z}{\mathbb Z}
\newcommand{\R}{\mathbb R}
\newcommand{\F}{\mathscr F}
\newcommand{\E}{\mathbb E}
\newcommand{\T}{\mathbb T}
\renewcommand{\H}{\mathbb H}
\renewcommand{\S}{\mathbb S}
\renewcommand{\P}{\mathbb P}
\newcommand{\p}{\mathcal{P}}
\newcommand{\hp}{\mathcal{P}^h}
\newcommand{\argsh}{\mathrm{argsh}\,}
\newcommand{\argch}{\mathrm{argch}\,}
\theoremstyle{definition}
\newtheorem{thm}{Theorem}
\newenvironment{idproof}{
  \noindent \textit{Sketch of proof. }}{\hfill \(\square\) }
\newtheorem{defn}{Definition}[section]
\newtheorem{rem}[defn]{Remark}
\newtheorem{prop}[defn]{Proposition}
\newtheorem{corr}[defn]{Corollary}
\newtheorem{lem}[defn]{Lemma}
\definecolor{darkgreen}{rgb}{0,0.3,0}
\tikzstyle{every node}=[circle, draw, fill=black!50, inner sep=0pt, minimum width=4pt]
\tikzstyle{noir}=[circle, draw, fill=black, inner sep=0pt, minimum width=6pt]
\tikzstyle{blanc}=[circle, draw, fill=white, inner sep=0pt, minimum width=6pt]
\tikzstyle{bleu}=[circle, draw, fill=blue, inner sep=0pt, minimum width=4pt]
\tikzstyle{rouge}=[circle, draw, fill=red, inner sep=0pt, minimum width=4pt]
\tikzstyle{voilet}=[circle, draw, fill=violet, inner sep=0pt, minimum width=4pt]
\tikzstyle{texte}=[draw=none, fill=none]
\tikzstyle{marche}=[circle, draw, fill=orange, inner sep=0pt, minimum width=8pt]
\tikzstyle{edge}=[rectangle, draw, fill=violet, inner sep=0pt, minimum width=4pt, minimum height=4pt]
\tikzstyle{edge'}=[rectangle, draw, fill=violet, inner sep=0pt, minimum width=2pt, minimum height=2pt]
\tikzstyle{rootedge}=[rectangle, draw, fill=red, inner sep=0pt, minimum width=4pt, minimum height=4pt]
\tikzstyle{dual}=[circle, draw=red, fill=white, inner sep=0pt, minimum width=4pt]
\title{\bf{Infinite geodesics in hyperbolic random triangulations}}
\author{Thomas \bsc{Budzinski} \footnote{ENS Paris and Université Paris-Saclay, \url{thomas.budzinski@ens.fr}}}
\begin{document}

\maketitle

\begin{abstract}
We study the structure of infinite geodesics in the Planar Stochastic Hyperbolic Triangulations $\T_{\lambda}$ introduced in \cite{CurPSHIT}. We prove that these geodesics form a supercritical Galton--Watson tree with geometric offspring distribution. The tree of infinite geodesics in $\T_{\lambda}$ provides a new notion of boundary, which is a realization of the Poisson boundary. By scaling limit arguments, we also obtain a description of the tree of infinite geodesics in the hyperbolic Brownian plane. Finally, by combining our main result with a forthcoming paper \cite{B18}, we obtain new hyperbolicity properties of $\T_{\lambda}$: they satisfy a weaker form of Gromov-hyperbolicity and admit bi-infinite geodesics.
\end{abstract}

\section*{Introduction}

The construction and study of random infinite triangulations has been a very active field of reasearch in recent years. The first such triangulation that was built is the UIPT \cite{AS03, Ang03}. A key feature in the study of this object is its spatial Markov property, which motivated the introduction of a one-parameter family $(\T_{\lambda})_{0<\lambda \leq \lambda_c}$ of type-I triangulations\footnote{To be exact, type-II triangulations (i.e. with no loops) were considered in \cite{CurPSHIT}, while the type-I triangulations (with loops) were built in \cite{B16}. In this work, unless specified otherwise, we only consider type-I triangulations.} with $\lambda_c=\frac{1}{12\sqrt{3}}$, satisfying a similar property \cite{CurPSHIT, B16} (see also \cite{AR13} for similar constructions in the halfplanar case). The case $\lambda=\lambda_c$ corresponds to the UIPT, whereas for $\lambda<\lambda_c$ the triangulation $\T_{\lambda}$ has hyperbolic behaviour. For example, it was proved that $\T_{\lambda}$ has a.s. exponential volume growth and that the simple random walk on it has positive speed \cite{CurPSHIT}. The goal of this work is to establish hyperbolicity properties of these maps related to their geodesics.

\paragraph{Leftmost geodesic rays.}
Our first goal in this work is to describe precisely the structure of infinite geodesics in the triangulations $\T_{\lambda}$. More precisely, all the triangulations considered here are \emph{rooted}, that is, equipped with a distinguished oriented edge called the \emph{root edge}. The \emph{root vertex}, that we write $\rho$, is the starting point of the root edge. For any two vertices $x$ and $y$ in $\T_{\lambda}$, we call a geodesic $\gamma$ from $x$ to $y$ \emph{leftmost} if for any geodesic $\gamma'$ from $x$ to $y$, the union of $\gamma$ and $\gamma'$ cuts $\T_{\lambda}$ in two parts, and the part on the left of $\gamma$ is infinite. It is easy to see that for any vertices $x$ and $y$ there is a unique leftmost geodesic from $x$ to $y$. A \emph{leftmost geodesic ray} is a sequence of vertices $\left( \gamma(n) \right)_{n \geq 0}$ such that $\gamma(0)=\rho$ and for any $n \geq 0$, the path $\left( \gamma(i) \right)_{0 \leq i \leq n}$ is a leftmost geodesic from $\rho$ to $\gamma(n)$. We denote by $\mathbf{T}^g_{\lambda}$ the union of all the leftmost geodesic rays of $\T_{\lambda}$. We can see this set of vertices as a graph by relating two vertices if they are consecutive on the same geodesic ray. By uniqueness of the leftmost geodesic between two points, the graph $\mathbf{T}^g_{\lambda}$ is an infinite tree with no leaf. Moreover, the tree $\mathbf{T}^g_{\lambda}$ divides $\T_{\lambda}$ into infinite maps with geodesic boundaries, that we call \emph{strips} (see the left part of Figure \ref{TreeAndSlices}). Our first main result describes the distribution of $\mathbf{T}^g_{\lambda}$ and of these strips. Let $0<\lambda \leq \lambda_c$. Let $h \in \left( 0, \frac{1}{4} \right]$ be such that
\begin{equation}\label{eqn_h_lambda}
\lambda=\frac{h}{(1+8h)^{3/2}},
\end{equation}
and let
\begin{equation}\label{eqn_m_lambda}
m_{\lambda}=\frac{1-2h-\sqrt{1-4h}}{2h} \leq 1.
\end{equation}

\begin{thm}\label{thm1_GW}
\begin{itemize}
\item
The tree $\mathbf{T}^g_{\lambda}$ is a Galton--Watson tree with offspring distribution $\mu_{\lambda}$, where $\mu_{\lambda}(0)=0$ and $\mu_{\lambda}(k)=m_{\lambda} (1-m_{\lambda})^{k-1}$ for $k \geq 1$.
\item
There are two random infinite strips $S^0_{\lambda}$ and $S^1_{\lambda}$ such that the following holds. Conditionally on $\mathbf{T}^g_{\lambda}$:
\begin{enumerate}
\item
the strips delimited by $\mathbf{T}^g_{\lambda}$ are independent,
\item
the strip containing the face lying on the right of the root edge has the same distribution as $S^1_{\lambda}$,
\item
all the other strips have the same distribution as $S^0_{\lambda}$.
\end{enumerate}
\end{itemize}
\end{thm}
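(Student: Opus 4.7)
The plan is to combine a carefully designed peeling exploration with the spatial Markov property of $\T_{\lambda}$ (as established in \cite{CurPSHIT, B16}). The proof should decompose into (i) establishing existence of infinite leftmost geodesic rays, (ii) designing a peeling that reveals the strips one at a time, and (iii) reading off the Galton--Watson structure and offspring law from the peeling transitions.

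First I would establish that infinite leftmost geodesic rays exist almost surely from $\rho$. Because leftmost geodesics to vertices at distance $n$ are nested in a consistent way (a prefix of a leftmost geodesic is leftmost), a compactness/diagonal extraction gives a leftmost geodesic ray from $\rho$ as soon as there are vertices arbitrarily far from $\rho$, which is guaranteed by the a.s.\ infinite volume and exponential growth of $\T_\lambda$ proved in \cite{CurPSHIT}. This identifies $\mathbf{T}^g_\lambda$ as the union of such rays, hence an infinite locally finite tree with no leaf.

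Next I would run a \emph{peeling along the tree} $\mathbf{T}^g_\lambda$ that reveals the strips one after another. Start with the strip $S^1$ immediately to the right of the root edge: peel faces to the right of the root edge in such a way that the ``leftmost'' boundary of the revealed region coincides with a leftmost geodesic ray from $\rho$; each peeling step either prolongs this left boundary along the same geodesic ray, or pushes it to the right, thereby opening a branching point at which a new strip of type $S^0$ begins. The key point is that, by the spatial Markov property, once the first strip $S^1$ is fully revealed the complement is an independent half-planar triangulation of the appropriate perimeter, and one can iterate the exploration on each of the newly opened strips independently. This simultaneously gives the conditional independence of strips (point (1) of the theorem), the identification of the strip distributions $S^0$ and $S^1$ (points (2)--(3)), and a recursive description of $\mathbf{T}^g_\lambda$.

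The Galton--Watson structure of $\mathbf{T}^g_\lambda$ and the geometric form of $\mu_\lambda$ then follow from a \emph{single local computation}: at each vertex of $\mathbf{T}^g_\lambda$, the number of children is the number of strips that meet at that vertex, which by the construction above is the number of successive ``open a new strip'' peeling events before a ``close the current branch'' event. Because the underlying peeling steps of $\T_\lambda$ are i.i.d.\ (by the spatial Markov property), this count is geometric, with parameter $m_\lambda$ to be identified explicitly from the peeling probabilities of $\T_\lambda$. Using the expressions of those peeling probabilities in terms of $h$ from \cite{CurPSHIT, B16} and the change of variables \eqref{eqn_h_lambda}, a straightforward algebraic computation should yield the value \eqref{eqn_m_lambda}.

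The main obstacle will be to justify rigorously that the ``leftmost boundary'' of the exploration coincides with a genuine leftmost geodesic ray of $\T_\lambda$, i.e.\ that no leftmost geodesic from $\rho$ to a far-away vertex can enter the interior of a revealed strip and exit through the other side. This is the consistency statement that turns the local construction into a global one; I expect it to follow from the fact that the two geodesic rays bounding a strip are indeed geodesic in $\T_\lambda$ and that any competing path through the strip would strictly increase length, combined with a finiteness argument ruling out pathological configurations inside the strip.
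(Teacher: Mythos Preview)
Your peeling-based outline is a genuinely different route from the paper, and as written it has a real gap that is more serious than the consistency issue you flag at the end.

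The paper does not peel. It uses the \emph{skeleton decomposition}: each hull $B_r^{\bullet}(\T_\lambda^1)$ is encoded by an explicit forest with an exact product formula (Lemma~\ref{skeleton}), and the leftmost geodesics from $\rho$ are exactly the paths running \emph{between} the trees of this forest. Infinite leftmost geodesic rays therefore correspond to the boundaries between the \emph{infinite reverse trees} of the skeleton, and the tree $\mathbf{T}^g_\lambda$ is canonically isomorphic to the tree $\mathbf{U}(\mathbf{f})$ that records how these infinite reverse trees sit next to each other. The proof of Theorem~\ref{thm1_GW} then consists in \emph{guessing} the right decomposition (a Galton--Watson tree $\mathbf{U}$ with i.i.d.\ reverse-Galton--Watson trees $\boldsymbol{\tau}^0$, $\boldsymbol{\tau}^1$ attached to its faces), computing the law of the ball of radius $r$ of the resulting forest, and matching it with the exact formula~\eqref{LoiForet} via the generating-function identity~\eqref{coefficients}. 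The geometric offspring law and the value of $m_\lambda$ drop out of this computation; $m_\lambda$ is not read off peeling probabilities but arises as the mean $g_\lambda'(1)$ of the skeleton offspring distribution.

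The gap in your approach is the sentence ``Because the underlying peeling steps of $\T_\lambda$ are i.i.d.\ (by the spatial Markov property), this count is geometric''. Peeling steps of the full-plane $\T_\lambda$ are \emph{not} i.i.d.: the transition at each step depends on the current boundary length through the ratio $c_\lambda(p')/c_\lambda(p)$. They become i.i.d.\ only in the half-planar limit $\H_\lambda$, and you cannot invoke that limit before you have already peeled out an infinite region. Relatedly, ``once the first strip $S^1$ is fully revealed the complement is an independent half-planar triangulation'' is not a statement the spatial Markov property gives you for free: the Markov property is formulated for finite explored regions with simple boundary, and passing to an infinite explored strip with two infinite geodesic boundaries requires exactly the kind of limit argument (Lemma~\ref{halfplane_peeling}) that already presupposes control of the peeling. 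Finally, ``the number of children is the number of successive `open a new strip' events before a `close the current branch' event'' is not a well-defined local event: whether a given leftmost geodesic from $\rho$ to $\partial B_r^{\bullet}$ extends to an \emph{infinite} leftmost geodesic ray cannot be decided by finitely many peeling steps, so the branching of $\mathbf{T}^g_\lambda$ at height $r$ is not a stopping time for any natural peeling filtration. The skeleton approach sidesteps all of this by working with exact finite-$r$ distributions and letting $r\to\infty$ only at the end.
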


Note that for $\lambda=\lambda_c$, we have $h=\frac{1}{4}$ so $m_{\lambda_c}=1$ and $\mu_{\lambda_c}(1)=1$, so the tree $\mathbf{T}^g_{\lambda_c}$ consists of a single ray. This is reminiscent of the geodesics confluence properties already observed in \cite{CMMinfini} for the UIPQ (the natural analog of $\T_{\lambda_c}$ for quadrangulations): there are infinitely many points that lie on every geodesic ray. See also \cite{CM18} for similar results in the UIPT. However, the results of \cite{CMMinfini} are about all the geodesic rays whereas we only study the leftmost ones, so our result for $\lambda=\lambda_c$ does not obviously imply those of \cite{CMMinfini}. On the other hand, for $\lambda<\lambda_c$, we have $m_{\lambda}<1$, so the offspring distribution $\mu_{\lambda}$ is supercritical and there are infinitely many leftmost geodesic rays. Moreover, the rate of exponential growth $\mu_{ \lambda}$ of $\mathbf{T}^g_{\lambda}$ is the same as the rate of exponential volume growth of $\T_{\lambda}$.

We will also describe the distributions of $S^0_{\lambda}$ and $S^1_{\lambda}$ explicitly in terms of reverse Galton--Watson trees. For $\lambda<\lambda_c$, these strips should be thought of as "thin", in the sense that their width is of constant order as the distance from the root goes to $+\infty$.

We also state right now a consequence of Theorem \ref{thm1_GW} that will be useful later. Let $\gamma_{\ell}$ (resp. $\gamma_r$) be the path in $\mathbf{T}^g_{\lambda}$ that bounds $S^1_{\lambda}$ on its right (resp. on its left). Then $\gamma_ {\ell}$ (resp. $\gamma_r$) can be thought of as the leftmost (resp. rightmost) path in $\mathbf{T}^g_{\lambda}$, seen from the root (see Figure \ref{TreeAndSlices}). We write $\S_{\lambda}$ for the part of $\T_{\lambda}$ lying between $\gamma_{\ell}$ and $\gamma_r$, including the initial segment that $\gamma_{\ell}$ and $\gamma_r$ have in common (cf. Figure \ref{TreeAndSlices}). Then $\S_{\lambda}$ can be seen as a gluing of infinitely many independent copies of $S^0_{\lambda}$ along $\mathbf{T}^g_{\lambda}$. This implies that $\S_{\lambda}$ has an interesting self-similarity property. Indeed, let $r>0$. We condition on $B_r \left( \mathbf{T}^g_{\lambda} \right)$, the finite subtree of $\mathbf{T}^g_{\lambda}$ formed by those vertices lying at distance at most $r$ from $\rho$. Let $x$ be a vertex of $\mathbf{T}^g_{\lambda}$ such that $d(\rho,x)=r$. Let $\gamma_{\ell}^x$ (resp. $\gamma_r^x$) be the leftmost (resp. rightmost) infinite path in $\mathbf{T}^g_{\lambda}$ started from $\rho$ and passing through $x$. Then the part of $\T_{\lambda}$ lying between $\gamma_{\ell}^x$ and $\gamma_r^x$ above $x$ has the same distribution as $\S_{\lambda}$ (see the right part of Figure \ref{TreeAndSlices}). Indeed, this part consists of a gluing of i.i.d. copies of $S^0_{\lambda}$ in the faces of the tree of descendants of $x$ in $\mathbf{T}^g_{\lambda}$, which has the same distribution (conditionally on $B_r \left( \mathbf{T}^g_{\lambda} \right)$) as $\mathbf{T}^g_{\lambda}$. We will denote this part of $\T_{\lambda}$ by $\S[x]$. In all the rest of this work, "thin" maps such as $S^0_{\lambda}$ and $S^1_{\lambda}$ will be refered to as \emph{strips}, and "thick" maps like $\S_{\lambda}$ as \emph{slices}.

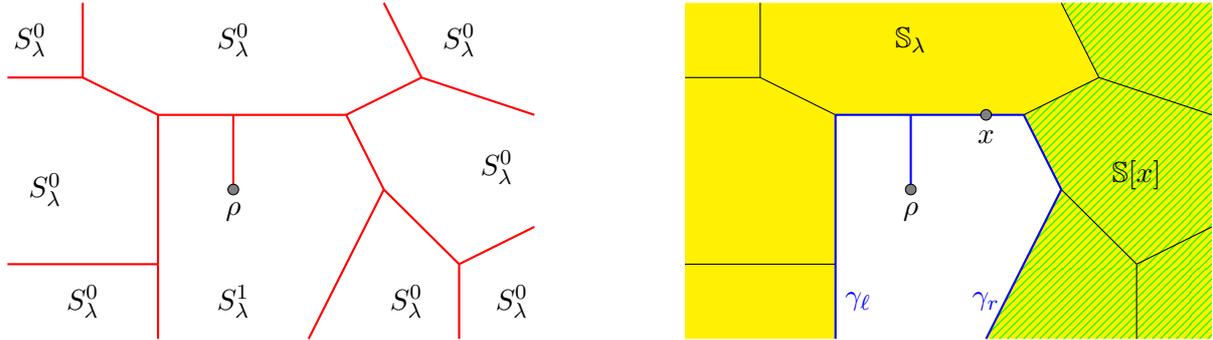
\begin{figure}
\begin{center}
\begin{tikzpicture}[scale=0.99]
\draw[red, thick] (0,0)--(0,1);
\draw[red, thick] (0,1)--(1.5,1);
\draw[red, thick] (0,1)--(-1,1);
\draw[red, thick] (-1,1)--(-1,-1);
\draw[red, thick] (-1,-1)--(-1,-2);
\draw[red, thick] (-1,1)--(-2,1.5);
\draw[red, thick] (-2,1.5)--(-2,2.5);
\draw[red, thick] (-2,1.5)--(-3,1.5);
\draw[red, thick] (-1,-1)--(-3,-1);
\draw[red, thick] (1.5,1)--(2.5,1.5);
\draw[red, thick] (1.5,1)--(2,0);
\draw[red, thick] (2,0)--(1,-2);
\draw[red, thick] (2,0)--(3,-1);
\draw[red, thick] (3,-1)--(3,-2);
\draw[red, thick] (3,-1)--(4,-0.5);
\draw[red, thick] (2.5,1.5)--(2,2.5);
\draw[red, thick] (2.5,1.5)--(4,1);

\draw (0,0) node{};

\draw (0,-0.3) node[texte]{$\rho$};
\draw (0,-1.5) node[texte]{$S^1_{\lambda}$};
\draw (-2,-1.5) node[texte]{$S^0_{\lambda}$};
\draw (-2.5,0) node[texte]{$S^0_{\lambda}$};
\draw (-2.7,2) node[texte]{$S^0_{\lambda}$};
\draw (0,2) node[texte]{$S^0_{\lambda}$};
\draw (3,2) node[texte]{$S^0_{\lambda}$};
\draw (3.5,0.3) node[texte]{$S^0_{\lambda}$};
\draw (2.3,-1.5) node[texte]{$S^0_{\lambda}$};
\draw (3.7,-1.5) node[texte]{$S^0_{\lambda}$};

\begin{scope}[shift={(9,0)}]
\fill[yellow] (-1,-2)--(-3,-2)--(-3,2.5)--(4,2.5)--(4,-2)--(1,-2)--(2,0)--(1.5,1)--(-1,1)--(-1,-2);
\fill[pattern color=green, pattern=north east lines] (2,2.5)--(4,2.5)--(4,-2)--(1,-2)--(2,0)--(1.5,1)--(2.5,1.5)--(2,2.5);

\draw[blue, thick] (0,0)--(0,1);
\draw[blue, thick] (0,1)--(1.5,1);
\draw[blue, thick] (0,1)--(-1,1);
\draw[blue, thick] (-1,1)--(-1,-1);
\draw[blue, thick] (-1,-1)--(-1,-2);
\draw (-1,1)--(-2,1.5);
\draw (-2,1.5)--(-2,2.5);
\draw (-2,1.5)--(-3,1.5);
\draw (-1,-1)--(-3,-1);
\draw (1.5,1)--(2.5,1.5);
\draw[blue, thick] (1.5,1)--(2,0);
\draw[blue, thick] (2,0)--(1,-2);
\draw (2,0)--(3,-1);
\draw (3,-1)--(3,-2);
\draw (3,-1)--(4,-0.5);
\draw (2.5,1.5)--(2,2.5);
\draw (2.5,1.5)--(4,1);

\draw (0,0) node{};
\draw (1,1) node{};

\draw (0,-0.3) node[texte]{$\rho$};
\draw (1,0.7) node[texte]{$x$};
\draw[blue] (-0.7,-1.5) node[texte]{$\gamma_{\ell}$};
\draw[blue] (1,-1.5) node[texte]{$\gamma_r$};
\draw (0,2) node[texte]{$\S_{\lambda}$};
\draw (3,0.2) node[texte]{$\S[x]$};

\end{scope}
\end{tikzpicture}
\end{center}
\caption{The strip decomposition of $\T_{\lambda}$. On the left, the tree $\mathbf{T}^g_{\lambda}$ is in red. On the right $\S_{\lambda}=\S[\rho]$ is colored in yellow. The part $\S[x]$ of $\S_{\lambda}$ that is shaded in green has the same distribution as $\S_{\lambda}$.}\label{TreeAndSlices}
\end{figure}

\paragraph{Hyperbolicity properties related to geodesics.}
By Theorem \ref{thm1_GW}, for $\lambda<\lambda_c$, the map $\T_{\lambda}$ contains a supercritical Galton--Watson tree. By combining this with the results of our forthcoming paper \cite{B18}, we obtain that $\T_{\lambda}$ satisfies two metric hyperbolicity properties: a weak form of Gromov-hyperbolicity, and the existence of bi-infinite geodesics.

More precisely, we recall that a graph $G$ is hyperbolic in the sense of Gromov if there is a constant $k \geq 0$ such that all the triangles are $k$-thin in the following sense. Let $x$, $y$ and $z$ be vertices of $G$ and $\gamma_{xy}$, $\gamma_{yz}$, $\gamma_{zx}$ be geodesics from $x$ to $y$, from $y$ to $z$ and from $z$ to $x$. Then for any vertex $v$ on $\gamma_{xy}$, the graph distance between $v$ and $\gamma_{yz} \cup \gamma_{zx}$ is at most $k$. As usual, such a strong, uniform statement cannot hold for $\T_{\lambda}$ since any finite triangulation appears somewhere in $\T_{\lambda}$. Therefore, we need to study an "anchored" version.

\begin{defn}
Let $M$ be a planar map. We say that $M$ is \emph{weakly anchored hyperbolic} if there is $k \geq 0$ such that the following holds. Let $x$, $y$ and $z$ be three vertices of $M$, and let $\gamma_{xy}$ (resp. $\gamma_{yz}$, $\gamma_{zx}$) be a geodesic from $x$ to $y$ (resp. $y$ to $z$, $z$ to $x$). Assume the triangle formed by $\gamma_{xy}$, $\gamma_{yz}$ and $\gamma_{zx}$ surrounds the root vertex $\rho$. Then
\[ d \left( \rho, \gamma_{xy} \cup \gamma_{yz} \cup \gamma_{zx} \right) \leq k.\]
\end{defn}

Another property studied in \cite{B18} is the existence of bi-infinite geodesics, i.e. paths $\big( \gamma(i) \big)_{i \in \Z}$ such that for any $i$ and $j$, the graph distance between $\gamma(i)$ and $\gamma(j)$ is exactly $|i-j|$. This is not strictly speaking a hyperbolicity property, since such geodesics exist e.g. in $\Z^2$. However, they are expected to disappear after perturbations of the metric like first-passage percolation (see for example \cite{Kes86SF}). On the other hand, bi-infinite geodesics are much more stable in hyperbolic graphs \cite{BT17}. In \cite{B18}, we prove that any random planar map containing a supercritical Galton--Watson tree with no leaf is a.s. weakly anchored hyperbolic, and contains bi-infinite geodesics. In particular, the following result follows from Theorem \ref{thm1_GW}.

\begin{corr}
Let $0<\lambda<\lambda_c$. Almost surely, the map $\T_{\lambda}$ is weakly anchored hyperbolic and admits bi-infinite geodesics.
\end{corr}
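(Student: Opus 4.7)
\begin{idproof}
The plan is to combine the structural description from Theorem \ref{thm1_GW} with the main result announced above from the forthcoming paper \cite{B18}: any random planar map containing a supercritical Galton--Watson tree with no leaf is almost surely weakly anchored hyperbolic and admits bi-infinite geodesics. Given this black box, my task reduces to exhibiting such a tree inside $\T_\lambda$.

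The natural candidate is of course $\mathbf{T}^g_\lambda$, which by Theorem \ref{thm1_GW} is a Galton--Watson tree with offspring distribution $\mu_\lambda$ supported on $\{1,2,\dots\}$, hence with no leaf by construction. The remaining step is to verify that $\mu_\lambda$ is supercritical whenever $\lambda<\lambda_c$. A direct computation with the shifted geometric law $\mu_\lambda(k)=m_\lambda(1-m_\lambda)^{k-1}$ gives mean $1/m_\lambda$. Since $\lambda<\lambda_c$ corresponds via \eqref{eqn_h_lambda} to $h<\tfrac14$, formula \eqref{eqn_m_lambda} yields $m_\lambda<1$, so the mean strictly exceeds $1$.

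Putting these together, $\T_\lambda$ almost surely contains a supercritical leafless Galton--Watson tree, and applying the result of \cite{B18} to $(\T_\lambda,\mathbf{T}^g_\lambda)$ gives both conclusions simultaneously. I expect no genuine obstacle in the present paper: once Theorem \ref{thm1_GW} is established, this corollary is essentially a one-line translation statement, and the real geometric content — turning the presence of a supercritical Galton--Watson subtree into the two hyperbolicity properties — is entirely deferred to \cite{B18}.
\end{idproof}
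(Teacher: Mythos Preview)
Your proposal is correct and matches the paper's approach exactly: the paper does not even spell out a separate proof, simply noting that the corollary follows from Theorem~\ref{thm1_GW} together with the announced result of \cite{B18}. Your verification that $\mu_\lambda$ is supercritical with no leaf for $\lambda<\lambda_c$ is precisely the missing detail, and there is nothing more to do.
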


The existence of bi-infinite geodesics answers a question of Benjamini and Tessera \cite{BT17}. Once again, there is a sharp contrast between the hyperbolic setting and "usual" random planar maps. For example, the results of \cite{CMMinfini} imply that such bi-infinite geodesics do not exist in the UIPQ.

\paragraph{Poisson boundary.}
Another goal of this work is to give a new description of the Poisson boundary of $\T_{\lambda}$ for $0<\lambda<\lambda_c$ in terms of the tree $\mathbf{T}^g_{\lambda}$. Let $G$ be an infinite, locally finite graph,  and let $G \cup \partial G$ be a compactification of $G$, i.e. a compact metric space in which $G$ is dense. Let also $(X_n)$ be the simple random walk on $G$. We say that $\partial G$ is a \emph{realization of the Poisson boundary} of $G$ if the following two properties hold:
\begin{itemize}
\item
$(X_n)$ converges a.s. to a point $X_{\infty} \in \partial G$,
\item
every bounded harmonic function $h$ on $G$ can be written in the form
\[h(x)=\mathbb{E}_x \left[ g(X_{\infty}) \right],\]
where $g$ is a bounded measurable function from $\partial G$ to $\R$.
\end{itemize}
A first realization of the Poisson boundary of $\T_ {\lambda}$ is given by a work of Angel, Hutchcroft, Nachmias and Ray \cite{AHNR15}: let $\partial_{CP} \T_{\lambda}$ be the boundary of the circle packing of $\T_{\lambda}$ in the unit disk. We may equip $\T_{\lambda} \cup \partial_{CP} \T_{\lambda}$ with the topology induced by the usual topology on the unit disk. Then almost surely, $\partial_{CP} \T_{\lambda}$ is a realization of the Poisson boundary of $\T_{\lambda}$. Moreover, almost surely, the distribution of the limit point $X_{\infty}$ has full support and no atoms in $\partial_{CP} \T_{\lambda}$.

We write $\partial \mathbf{T}^g_{\lambda}$ for the space of infinite rays of $\mathbf{T}^g_{\lambda}$. If $\gamma, \gamma' \in \partial \mathbf{T}^g_{\lambda}$, we write $\gamma \sim \gamma'$ if $\gamma=\gamma'$ or if $\gamma$ and $\gamma'$ are the left and right boundaries of the same strip. Then $\sim$ is a.s. an equivalence relation in which countably many equivalence classes have cardinal $2$, and all the others have cardinal $1$. We write $\widehat{\partial} \mathbf{T}^g_{\lambda}= \partial \mathbf{T}^g_{\lambda} / \! \sim$. There is a natural way to equip $\T_{\lambda} \cup \widehat{\partial} \mathbf{T}^g_{\lambda}$ with a topology that makes it a compact space, see Section 3.1 for more details. Hence, $\T_{\lambda} \cup \widehat{\partial} \mathbf{T}^g_{\lambda}$ can be seen as a compactification of the infinite graph $\T_{\lambda}$. We show that $\widehat{\partial} \mathbf{T}^g_{\lambda}$ is also a realization of the Poisson boundary.

\begin{thm}\label{thm2_Poisson}
Let $0<\lambda<\lambda_c$. Then almost surely:
\begin{enumerate}
\item
the limit $\lim X_n=X_{\infty}$ exists, and its distribution has full support and no atoms in $\widehat{\partial} \mathbf{T}^g_{\lambda}$,
\item
$\widehat{\partial} \mathbf{T}^g_{\lambda}$ is a realization of the Poisson boundary of $\T_{\lambda}$.
\end{enumerate}
\end{thm}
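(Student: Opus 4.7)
The plan is to reduce everything to the circle-packing realization of the Poisson boundary of $\T_\lambda$ from \cite{AHNR15} by constructing a measurable bijection, on the relevant supports, between $\partial_{CP}\T_\lambda$ and $\widehat{\partial}\mathbf{T}^g_\lambda$. Write $X_\infty^{CP}$ for the a.s. limit of $X_n$ in $\partial_{CP}\T_\lambda$, and, for each vertex $x$ of $\mathbf{T}^g_\lambda$, let $A_x \subset \partial_{CP}\T_\lambda$ be the closed arc obtained as the accumulation on $\partial \mathbb{D}$ of the circle-packing centers of vertices of $\S[x]$. The first step is to check, by standard circle-packing arguments together with the fact that the infinite tree rays $\gamma_\ell^x, \gamma_r^x$ separate $\S[x]$ from the rest of $\T_\lambda$, that for $x \neq x'$ at the same generation $r$ the arcs $A_x, A_{x'}$ have disjoint interiors, and that their union over all generation-$r$ vertices covers $\partial_{CP}\T_\lambda$. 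Combined with the non-atomic distribution of $X_\infty^{CP}$ (from \cite{AHNR15}), this yields a.s. a unique ray $(x_r)_{r \geq 0}$ of $\mathbf{T}^g_\lambda$ such that $X_\infty^{CP} \in \mathrm{int}\, A_{x_r}$ for every $r$; the consistency $\S[x_{r+1}] \subseteq \S[x_r]$ makes $(x_r)$ a genuine ray. Since $X_\infty^{CP} \in \mathrm{int}\, A_{x_r}$ also forces $X_n$ to eventually lie in $\S[x_r]$, the class of this ray in $\widehat{\partial} \mathbf{T}^g_\lambda$ is the a.s. limit $X_\infty$ of $X_n$ in the compactification of Section 3.1.

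The construction above provides a measurable map $\phi : \partial_{CP}\T_\lambda \to \widehat{\partial} \mathbf{T}^g_\lambda$ with $\phi(X_\infty^{CP}) = X_\infty$ a.s. The main technical difficulty is to show that $\phi$ is a.s. injective on the support of harmonic measure, equivalently that the nested arcs $A_{x_r}$ shrink to a single point of $\partial_{CP}\T_\lambda$, and that $\phi$ factors correctly through the equivalence $\sim$. I expect both to follow from the "thinness" of the strips $S^0_\lambda$ and $S^1_\lambda$: by the reverse Galton--Watson description of Theorem \ref{thm1_GW}, the two tree rays bounding such a strip stay at uniformly bounded graph distance in $\T_\lambda$, so Benjamini--Schramm-type estimates for the circle packing force the $\partial \mathbb{D}$-width of $A_{x_r}$ to collapse as $r \to \infty$, and force the two boundary rays of a single strip to share the same CP-limit. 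This geometric estimate is the main obstacle of the proof.

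Once $\phi$ is an a.s. bijection between the supports, the three remaining assertions are formal. Full support of the limit law of $X_\infty$: a basic open "rays through $x$" in $\widehat{\partial} \mathbf{T}^g_\lambda$ is $\phi^{-1}(\mathrm{int}\, A_x)$, and $\mathrm{int}\, A_x$ has positive harmonic mass by the full-support part of \cite{AHNR15}. No atoms: $\P(X_\infty = \xi) \leq \P\bigl( X_\infty^{CP} \in \phi^{-1}(\{\xi\}) \bigr) = 0$ by injectivity of $\phi$ and the no-atom part of \cite{AHNR15}. Poisson realization: by the bijection, $\sigma(X_\infty) = \sigma(X_\infty^{CP})$ up to null sets, and the latter is the invariant $\sigma$-algebra of the walk by \cite{AHNR15}; hence $\widehat{\partial} \mathbf{T}^g_\lambda$ is a realization of the Poisson boundary of $\T_\lambda$.
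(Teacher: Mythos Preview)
Your strategy is genuinely different from the paper's: you try to transport everything from the circle-packing boundary via a measurable bijection, while the paper works intrinsically with the geodesic boundary and never touches the circle packing. The paper's route is: (a) two independent walks $X^1,X^2$ on $\T_\lambda$ are a.s.\ separated by rays of $\mathbf{T}^g_\lambda$ lying eventually in each of the two halfplanes $[X^1,X^2]$ and $[X^2,X^1]$ (this uses the peeling process, the halfplanar limit $\H_\lambda$, and the existence of geodesics away from the boundary in $\H_\lambda$); (b) from this separation one shows that the accumulation set of a single walk on $\widehat{\partial}\mathbf{T}^g_\lambda$ is a point, and four independent walks give non-atomicity; (c) full support follows from the self-similarity of the slices $\S[x]$; (d) the Poisson-boundary realization then drops out of Hutchcroft--Peres \cite{HP17}, which only needs a.s.\ convergence to a non-atomic law on a topological circle. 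No comparison with $\partial_{CP}$ is required anywhere.

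Your proposal has a real gap, precisely at the step you flag as ``the main obstacle''. First, the sentence ``the two tree rays bounding such a strip stay at uniformly bounded graph distance in $\T_\lambda$'' is not correct: the width $Y(r)$ of a strip at height $r$ is a positive-recurrent Markov chain (Lemma~\ref{distribY}), hence tight but not uniformly bounded. Second, even tightness of the combinatorial width does not by itself give that the $\partial\mathbb{D}$-diameter of $A_{x_r}$ shrinks to zero: that would require quantitative control of circle radii along the strip, i.e.\ genuine analytic estimates on the packing of $\T_\lambda$, and nothing of that sort is available in the paper or in \cite{AHNR15} at the level of generality you need. Third, your covering claim ``the union of $A_x$ over generation-$r$ vertices is all of $\partial_{CP}\T_\lambda$'' already presupposes that each of the finitely many strips rooted below level $r$ accumulates at a single CP-boundary point, which is again the unproved thinness estimate. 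So the argument is circular at that point: both injectivity of $\phi$ and the basic set-up of $\phi$ hinge on the same missing input.

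In short, the reduction to \cite{AHNR15} is an attractive idea, but carrying it out would require new circle-packing estimates specific to the strip geometry, and these are not a routine consequence of Theorem~\ref{thm1_GW}. The paper sidesteps all of this by proving convergence and non-atomicity directly on $\widehat{\partial}\mathbf{T}^g_\lambda$ via the peeling/halfplane machinery, and then invoking \cite{HP17} rather than \cite{AHNR15} for the Poisson-boundary identification.
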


Note that, by a result of Hutchcroft and Peres \cite{HP17}, the second point will follow from the first one. Note also that once we have Theorem \ref{thm2_Poisson}, since the exit measure on $\widehat{\partial} \mathbf{T}^g_{\lambda}$ is nonatomic and only countably many pairs of points of $\partial \mathbf{T}^g_{\lambda}$ are identified, we also have almost sure convergence of $(X_n)$ to a point of $\partial \mathbf{T}^g_{\lambda}$.
Finally, we show in \cite{B18} that the existence of the limit $X_{\infty}$ and the fact that it has full support are true in the more general setting of planar maps obtained by "filling the faces" of a supercritical Galton--Watson tree with i.i.d. strips. However, we did not manage to prove non-atomicity and to obtain a precise description of the Poisson boundary in this general setting. Our proof of non-atomicity here uses an argument specific to $\T_{\lambda}$, based on its peeling process.

\paragraph{Geodesic rays in the hyperbolic Brownian plane.}
Finally, the last goal of this work is to take the scaling limit of Theorem \ref{thm1_GW} to obtain results about geodesics in continuum objects. Indeed, another purpose of the theory of random planar maps is to build continuum random surfaces. The first such surface that was introduced is the Brownian map \cite{LG11,Mie11}, which is now known to be the scaling limit of a wide class of finite planar maps conditioned to be large \cite{Ab13,AA13,BLG13,BJM13,CLGmodif,Mar16}. A noncompact version $\p$ called the Brownian plane was introduced in \cite{CLGplane} and is the scaling limit of the UIPQ and also of the UIPT \cite{B16}. Finally, it was shown in \cite{B16} that the hyperbolic random triangulations have a near-critical scaling limit called the hyperbolic Brownian plane and denoted $\hp$. More precisely, let $(\lambda_n)$ be a sequence of numbers in $(0,\lambda_c]$ satisfying
\[\lambda_n=\lambda_c \left( 1-\frac{2}{3n^4} +o \left( \frac{1}{n^4} \right) \right).\]
Then $\frac{1}{n} \T_{\lambda_n}$ converges for the local Gromov--Hausdorff distance to $\hp$. By taking the scaling limit of Theorem \ref{thm1_GW} and checking that the tree of infinite leftmost geodesics behaves well in the scaling limit, we obtain a precise description of the geodesic rays in $\hp$. Let $\mathbf{B}$ be the infinite tree in which every vertex has exactly two children, except the root which has only one.

\begin{thm} \label{thm3_hbp}
The infinite geodesic rays of $\hp$ form a tree $\mathbf{T}^g(\hp)$ that is distributed as a Yule tree with parameter $2 \sqrt{2}$, i.e. the tree $\mathbf{B}$ in which the lengths of the edges are i.i.d. exponential variables with parameter $2 \sqrt{2}$.
\end{thm}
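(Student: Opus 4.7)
The approach is to push the statement of Theorem~\ref{thm1_GW} through the scaling limit $\frac{1}{n}\T_{\lambda_n} \to \hp$ of \cite{B16}. First I would work out the asymptotics of the parameters. Setting $h_n = \frac{1}{4}-\delta_n$, a Taylor expansion of \eqref{eqn_h_lambda} yields $\lambda_n = \lambda_c\bigl(1 - \tfrac{8}{3}\delta_n^2 + O(\delta_n^3)\bigr)$, so the chosen scaling $\lambda_n = \lambda_c(1 - \tfrac{2}{3n^4} + o(n^{-4}))$ forces $\delta_n \sim \frac{1}{2n^2}$. Writing $u_n = 2\sqrt{\delta_n}$, formula \eqref{eqn_m_lambda} simplifies algebraically to $m_{\lambda_n} = \frac{1-u_n}{1+u_n}$, hence
\[
1 - m_{\lambda_n} \;=\; \frac{2 u_n}{1+u_n} \;=\; \frac{2\sqrt{2}}{n} + o(n^{-1}).
\]

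With this asymptotic in hand, the next step is to show that the rescaled discrete tree $\frac{1}{n}\mathbf{T}^g_{\lambda_n}$ converges in distribution, for the local Gromov--Hausdorff topology, to the Yule tree on $\mathbf{B}$ with parameter $2\sqrt{2}$. By Theorem~\ref{thm1_GW}, each vertex of $\mathbf{T}^g_{\lambda_n}$ has exactly one child with probability $m_{\lambda_n} = 1 - \frac{2\sqrt{2}}{n}(1+o(1))$, and conditionally on branching, the number of additional children is geometric with parameter $m_{\lambda_n} \to 1$, so it equals one with probability tending to one. Hence the branching times along any fixed lineage, rescaled by $\frac{1}{n}$, converge to a Poisson process of rate $2\sqrt{2}$, while the event of two or more extra children at a single branching becomes negligible; iterating along each branch gives the claimed limit for the abstract tree.

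The main obstacle is the third step: showing that this convergence of the abstract tree is compatible with the joint convergence $\frac{1}{n}\T_{\lambda_n} \to \hp$, in such a way that the limiting subtree really coincides with the tree $\mathbf{T}^g(\hp)$ of infinite geodesic rays of $\hp$. For this I would establish three points. \emph{Tightness:} each rescaled leftmost ray is $1$-Lipschitz in the rescaled metric, so subsequential limits exist and are continuum geodesic rays in $\hp$. \emph{No loss:} every infinite geodesic ray from the root of $\hp$ arises as such a limit, which I would prove by approximating it by near-geodesic discrete paths in $\T_{\lambda_n}$ and then ``straightening'' these into leftmost geodesics using the uniqueness property of leftmost geodesics stated before Theorem~\ref{thm1_GW}. \emph{No collision:} two leftmost rays of $\mathbf{T}^g_{\lambda_n}$ that branch at discrete height of order $h$ remain at graph distance of order $n$ at discrete heights much larger than $h$, so distinct discrete rays yield distinct continuum rays in the limit. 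The last point is the delicate one; the natural tool is the strip/slice decomposition recalled after Theorem~\ref{thm1_GW}, for once two leftmost rays separate, the slice between them is a gluing of i.i.d.\ copies of $S^0_{\lambda_n}$, and combined with the exponential volume growth of $\T_{\lambda_n}$ this should force the bounding paths to diverge at a macroscopic rate. Quantifying this separation uniformly in $n$, so that the scaling $1/n$ is precisely the right one to reconcile the branching rate $2\sqrt{2}$ with the continuum geodesic geometry of $\hp$, is the core technical estimate required to complete the proof.
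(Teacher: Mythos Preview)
Your overall strategy---push Theorem~\ref{thm1_GW} through the scaling limit $\frac{1}{n}\T_{\lambda_n}\to\hp$---is exactly the paper's, and your asymptotic computation $1-m_{\lambda_n}\sim 2\sqrt{2}/n$ and the convergence of the abstract tree $\frac{1}{n}\mathbf{T}^g_{\lambda_n}$ to $\mathbf{Y}_{2\sqrt{2}}$ both match the paper (Lemma~\ref{GHtree}).

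Where you diverge from the paper is in the three technical sub-steps, and two of them have real gaps.

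\textbf{No collision.} You propose to use the strip decomposition plus exponential volume growth to force macroscopic separation of two leftmost rays. The paper does \emph{not} argue this way: its Lemma~\ref{nocollision} is proved by contradiction using the topology of $\hp$. If the two boundary geodesics of a strip came within $\delta n$ at heights in $[an,bn]$, then with probability bounded below (independent of $n$) this strip is the \emph{only} one reaching height $(b+1)n$, and gluing its boundaries gives a separating cycle of length $\leq 2\delta n$ in $\T_{\lambda_n}$; passing to the limit yields a cut point in $\hp$, contradicting that $\hp$ is homeomorphic to the plane. Your volume-growth heuristic does not obviously give a uniform-in-$n$ lower bound on the \emph{distance} between the boundaries, only on the size of what lies between them; these are different statements and the paper's topological argument bypasses the issue entirely.

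\textbf{No loss.} Your plan is to approximate a continuum geodesic of $\hp$ by near-geodesics in $\T_{\lambda_n}$ and then ``straighten'' to leftmost geodesics. The paper instead proves a discrete estimate (Lemma~\ref{No_new_geodesics}) going the other way: via the skeleton decomposition it shows that every point of $S^0_{\lambda_n}$ far from the root has a geodesic to $\rho$ that coincides with one of the boundary rays on an initial segment of length $rn$. This is what survives in the limit and forces every continuum geodesic to pass through the prescribed points $x_1,x_2$. Your straightening step is missing a mechanism to control how far the straightened leftmost geodesic can drift from the original approximant.

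\textbf{A missing ingredient.} You do not invoke any continuum input about Brownian geodesics, but the paper relies essentially on Proposition~\ref{no-o-} (from \cite{AKM15}): two geodesics from $\rho$ to $x$ in the Brownian map that agree near $x$ must coincide. Transferred to $\hp$ by absolute continuity, this is what guarantees that $\mathbf{T}^g(\hp)$ is a genuine tree (uniqueness of the geodesic from $\rho$ to any $x\in\mathbf{T}^g(\hp)$), and it is used again in Lemma~\ref{deterministic} to pin down the points $x_1,x_2$ uniquely when unwinding the Gromov--Hausdorff-closed approximating events. Without this, you cannot rule out that the limit object has extra branching or non-tree geometry.

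Finally, the paper spends Section~\ref{subsec:GH_closed} and the Appendix building explicit Gromov--Hausdorff-closed events $\mathscr{A}_{r,C,R}^{\delta,\eps}(a,b)$ to make the passage to the limit rigorous; this bookkeeping is not trivial and your outline does not address it.
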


Once again, this behaviour is very different from the non-hyperbolic setting: in the Brownian plane, there is only one geodesic ray (this is Proposition 15 of \cite{CLGplane}, and is an easy consequence of the local confluence of geodesics proved in \cite{LG09} for the Brownian map). We also note that the rate $2\sqrt{2}$ of exponential growth of $\mathbf{T}^g(\hp)$ is the same as the rate of exponential growth of the perimeters and volumes of the hulls of $\hp$ \cite[Corollary 1]{B16}.

\paragraph{The skeleton decomposition.}
Our main tool for proving these results will be the skeleton decomposition of planar triangulations introduced by Krikun \cite{Kri04} for the type-II UIPT. See also \cite{CLGmodif} for the adaptation to the (slightly easier) type-I case. This decomposition encodes a triangulation by a reverse forest, where leftmost geodesics from the root pass between the trees and between their branches. The infinite forest describing the UIPT consists of a single tree, which can be seen as a reverse Galton--Watson tree with critical offspring distribution, started at time $-\infty$, and conditioned to have exactly $1$ vertex at time $0$ and to die at time $1$. We obtain a similar description for the infinite forest encoding $\T_{\lambda}$ for $0<\lambda<\lambda_c$, but here the offspring distribution is subcritical. A key feature is that the forest now contains infinitely many infinite trees. The parts of $\T_{\lambda}$ described by each of these trees are the strips delimited by the leftmost geodesic rays. Finally, let us highlight that the construction of infinite reverse Galton--Watson trees that we present in Section \ref{subsec:construction_reversee_trees} holds for any subcritical or critical Galton--Watson process and might be of independent interest.

\paragraph{Structure of the paper.}
The structure of the paper is as follows. In Section 1, we recall some combinatorial results about planar triangulations, and we recall the definition and some basic properties of the maps $\T_{\lambda}$ and their halfplanar analogs. In Section 2, we prove Theorem \ref{thm1_GW} by computing the skeleton decomposition of $\T_{\lambda}$. Section 3 is devoted to the proof of Theorem \ref{thm2_Poisson}, and Section 4 to the proof of Theorem \ref{thm3_hbp}. In Appendix A, we prove a technical result needed in Section 4, which shows that a wide class of events related to geodesics are closed for the Gromov--Hausdorff distance.

\paragraph{Acknowledgments:} I thank Nicolas Curien for carefully reading several earlier versions of this manuscript, and Itai Benjamini for his question about bi-infinite geodesics. I am also grateful to the anonymous reviewers for their useful comments. I acknowledge the support of ANR Liouville (ANR-15-CE40-0013) and ANR GRAAL (ANR-14-CE25-0014).

\tableofcontents

\section{Combinatorics and preliminaries}

\subsection{Combinatorics}

For $n \geq 0$ and $p \geq 1$, a \emph{triangulation of the $p$-gon with $n$ inner vertices} is a planar map with $n+p$ vertices and a distinguished face called the \emph{outer face}, in which all faces except perhaps the outer face are triangles, and such that the boundary of the outer face is a simple cycle of length $p$. It is equipped with a root edge such that the outer face touches the root edge on its right.
We consider type-I triangulations, which means we allow triangulations containing loops and multiple edges. We denote by $\mathscr{T}_{n,p}$ the set of triangulations of the $p$-gon with $n$ inner vertices.

The number of triangulations with fixed volume and perimeter can be computed by a result of Krikun, as a special case of the main theorem of \cite{Kri07}:
\begin{equation}\label{enumeration}
\# \mathscr{T}_{n,p} = \frac{p(2p)!}{(p!)^2} \frac{4^{n-1} (2p+3n-5)!!}{n! (2p+n-1)!!} \underset{n \to +\infty}{\sim} c(p) \lambda_c^{-n} n^{-5/2},
\end{equation}
where $\lambda_c=\frac{1}{12 \sqrt{3}}$ and 
\begin{equation} \label{formula_Cp}
c(p) = \frac{3^{p-2} p (2p)!}{4 \sqrt{2 \pi} (p!)^2} \underset{p \to +\infty}{\sim} \frac{1}{36 \pi \sqrt{2}} 12^p \sqrt{p}.
\end{equation}
For $p \geq 1$ and $\lambda \geq 0$, we write $w_{\lambda}(p)=\sum_{n \geq 0} \# \mathscr{T}_{n,p} \lambda^n$. Note that by the asymptotics \eqref{enumeration}, we have $w_{\lambda}(p)<+\infty$ if and only if $\lambda \leq \lambda_c$. We finally write $ W_{\lambda}(x)=\sum_{p \geq 1} w_{\lambda}(p) x^p$. Formula (4) of \cite{Kri07} computes $W_{\lambda}$ after a simple change of variables:
\begin{equation}\label{G}
W_{\lambda}(x) = \frac{\lambda}{2} \bigg( \Big(1-\frac{1+8h}{h}x \Big) \sqrt{1-4(1+8h)x} -1+\frac{x}{\lambda}\bigg),
\end{equation}
where $h \in \big( 0, \frac{1}{4} \big]$ is given\footnote{Note that our $h$ corresponds to the $h^3$ of Krikun.} by \eqref{eqn_h_lambda}. From this formula, we easily get
\begin{equation}\label{Z_1}
w_{\lambda}(1)=\frac{1}{2}-\frac{1+2h}{2\sqrt{1+8h}}
\end{equation}
and, for $p \geq 2$,
\begin{equation}\label{Z_p}
w_{\lambda}(p)=( 2+16h)^{p}\frac{(2p-5)!!}{p!} \frac{(1-4h)p+6h}{4 (1+8h)^{3/2} }.
\end{equation}
We also define a \emph{Boltzmann triangulation of the $p$-gon with parameter $\lambda$} as a random triangulation $T$ such that $\P \left( T=t \right)=\frac{\lambda^n}{w_{\lambda}(p)}$ for every $n \geq 0$ and $t \in \mathscr{T}_{n,p}$.

\subsection{Planar and halfplanar hyperbolic type-I triangulations}

We recall from \cite{B16} the definition of the random triangulations $\T_{\lambda}$ for $0<\lambda \leq \lambda_c$. A \emph{finite triangulation with a hole of perimeter $p$} is a rooted map with a distinguished face called the \emph{hole} in which all the faces are triangles except perhaps the hole, and where the boundary of the hole is a simple cycle of length $p$. The difference with a triangulation of the $p$-gon is that we do not require the root to lie on the boundary.
Let $t$ be a finite triangulation with a hole of perimeter $p$ and let $T$ be an infinite, one-ended triangulation of the plane. We write $t \subset T$ if $T$ can be obtained by filling the hole of $t$ with an infinite triangulation of the $p$-gon. For $0 < \lambda \leq \lambda_c$, the distribution of $\T_{\lambda}$ can be characterized as follows. For any finite triangulation $t$ with a hole of perimeter $p$, we have
\[ \mathbb{P} \left( t \subset \T_{\lambda} \right)=c_{\lambda}(p) \lambda^{|t|}, \]
where $|t|$ is the total number of vertices of $t$ and
\begin{equation}\label{Cformula}
c_{\lambda}(p)=\frac{1}{\lambda} \Big( 8+\frac{1}{h}\Big)^{p-1} \sum_{q=0}^{p-1} \binom{2q}{q} h^{q},
\end{equation}
where $h$ is as in \eqref{eqn_h_lambda}. Equivalently, we can compute the generating function
\begin{equation}\label{generating_cplambda}
C_{\lambda}(x)=\sum_{p \geq 1} c_{\lambda}(p) x^p=\frac{x}{\lambda \Big(1-\frac{1+8h}{h}x \Big) \sqrt{1-4(1+8h)x}}.
\end{equation}
Note that the numbers $c_{\lambda_c}(p)$ are equal to the $c(p)$ defined by \eqref{formula_Cp} and $\T_{\lambda_c}$ corresponds to the type-I UIPT \cite{CLGmodif,St18}. As in the type-II case \cite{CurPSHIT}, the triangulations $\T_{\lambda}$ exhibit a spatial Markov property similar to that of the UIPT: if $t$ is a finite triangulation with a hole of perimeter $p$, conditionally on $t \subset \T_{\lambda}$, the distribution of the infinite triangulation that fills the hole of $t$ only depends on $p$. We denote by $\T^p_{\lambda}$ a triangulation with this distribution. By a simple root transformation (more precisely duplicating the root edge, adding a loop in between and rooting the map at this new loop, see Figure 2 of \cite{CLGmodif}), triangulations of the plane are equivalent to infinite triangulations of the $1$-gon. In particular, the image of $\T_{\lambda}$ under this root transformation is $\T_{\lambda}^1$, so studying one or the other are equivalent. In particular, the root transformation does not affect leftmost geodesics from the root, so all the results we will first obtain about geodesic rays in $\T_{\lambda}^1$ are immediate to transfer to $\T_{\lambda}$.

We also define the halfplanar analog of $\T_{\lambda}$, that we will denote by $\mathbb{H}_{\lambda}$. The goal of the next paragraphs is to adapt to the type-I setting results from \cite{AR13} and \cite{CurPSHIT} that are already well-known in the type-II setting. This will only be used in Section 3 to study the random walk on $\T_{\lambda}$, so all the rest of Section 1 can be skipped in the first read-through. A \emph{triangulation of the halfplane} is an infinite planar map in which all the faces are triangles except one called the \emph{outer face}, whose boundary is simple and infinite. Triangulations of the halfplane are rooted in such a way that the root edge touches the outer face on its right.
We note that Angel and Ray build in \cite{AR13} a family $(\mathbb{H}^{\mathrm{II}}_{\alpha})_{2/3<\alpha<1}$ of hyperbolic triangulations of the halfplane in the type-II setting and explain in Section 3.4 how to "add loops" to obtain type-I triangulations. The triangulations $\mathbb{H}_{\lambda}$ we define are a particular case of their construction. However, as in \cite{B16} and in order to limit the computations, we prefer to construct the maps $\mathbb{H}_{\lambda}$ directly instead of relying on the type-II case.

The law of $\mathbb{H}_{\lambda}$ is characterized by the following. Let $t$ be a triangulation of the $p$-gon with a marked segment of edges $\partial_{\mathrm{out}} t$ on its boundary, such that $\partial_{\mathrm{out}} t$ contains the root edge. Such triangulations will be called \emph{marked triangulations}. Let $\partial_{\mathrm{in}} t=\partial t \backslash \partial_{\mathrm{out}} t$. We write $|t|_{\mathrm{in}}$ for the number of vertices of $t$ that do not lie on $\partial_{\mathrm{out}} t$. We also write $| \partial_{\mathrm{in}} t|$ (resp. $| \partial_{\mathrm{out}} t |$) for the number of edges on $\partial_{\mathrm{in}} t$ (resp. $\partial_{\mathrm{out}} t$). We write $t \subset \H_{\lambda}$ if $\H_{\lambda}$ can be obtained by gluing a triangulation of the halfplane $H$ to $t$ in such a way that $\partial t \cap \partial H=\partial_{\mathrm{in}} t$.

\begin{prop}
For any $0< \lambda \leq \lambda_c$, there is a unique (in distribution) random triangulation $\mathbb{H}_{\lambda}$ of the halfplane such that for every marked triangulation $t$, we have
\[\P \left( t \subset \mathbb{H}_{\lambda} \right)= \left( 8+\frac{1}{h} \right)^{|\partial_{\mathrm{in}} t| - |\partial_{\mathrm{out}} t|} \lambda^{|t|_{\mathrm{in}}}. \]
\end{prop}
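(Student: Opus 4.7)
The plan is to establish uniqueness via a standard $\pi$-system argument and existence by constructing $\H_{\lambda}$ through a peeling Markov chain. Uniqueness is immediate: as $t$ ranges over marked triangulations, the cylinder events $\{t \subset H\}$ form a $\pi$-system generating the Borel $\sigma$-algebra for the local topology on triangulations of the halfplane, so any probability measure is determined by the values it assigns to such events.

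For existence, I would fix a deterministic peeling rule that selects an edge $e \in \partial_{\mathrm{in}} t$ and define one-step transition probabilities according to where the apex of the peeled triangle lies: (i) a new interior vertex; (ii) a vertex on $\partial_{\mathrm{in}} t$ at finite distance on the left or right, cutting off a pocket filled by an independent Boltzmann triangulation of parameter $\lambda$; (iii) a vertex on the halfplane boundary outside $\partial_{\mathrm{out}} t$. For each such outcome I would take as transition weight precisely the factor by which the target formula $(8+1/h)^{|\partial_{\mathrm{in}}|-|\partial_{\mathrm{out}}|}\lambda^{|\cdot|_{\mathrm{in}}}$ would change when passing to the peeled triangulation; this determines the weights as explicit monomials in $\lambda$, $(8+1/h)$ and values of $w_\lambda$.

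The key step is to verify that these weights sum to $1$, which after bookkeeping reduces to an identity involving $W_\lambda$ evaluated at the special point $x_0 := h/(1+8h) = (8+1/h)^{-1}$. At this value the factor $1 - \frac{1+8h}{h}\,x$ appearing in formula \eqref{G} vanishes, so $W_\lambda(x_0) = (x_0-\lambda)/2$, and combined with \eqref{Z_1} and the relation $\lambda = h/(1+8h)^{3/2}$ the required identity reduces to a short algebraic check. This is the type-I, halfplanar analogue of the identity appearing in the planar construction of \cite{B16} and in the type-II halfplanar construction of \cite{AR13}, so no new conceptual input is needed.

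With summability established, Kolmogorov's extension theorem produces a Markov chain on marked triangulations realizing a random triangulation $\H_\lambda$ of the halfplane. An induction on the exploration time then shows that the marginals satisfy the claimed formula, since each peeling step multiplies it by exactly the assigned transition weight. The main technical obstacle is the careful bookkeeping of perimeter and vertex counts across all three peeling cases (in particular the outer-boundary case inherent to the halfplanar setting), but this is routine and follows the template of \cite{AR13}.
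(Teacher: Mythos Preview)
Your proposal is correct and follows the paper's approach: uniqueness from the $\pi$-system of cylinder events, existence via a peeling construction whose one-step weights are read off the target formula and shown to sum to $1$ using $W_\lambda(h/(1+8h))$ computed from \eqref{G}. The paper simplifies slightly by peeling directly an edge on the boundary of the unexplored region (initially the root edge on $\partial\H_\lambda$), so that the complement is always a fresh halfplane and your cases (ii) and (iii) collapse into the single case ``apex $i$ edges along the active boundary'' with weight $(8+1/h)^{-i}w_\lambda(i+1)$; this removes the extra bookkeeping you anticipate, and note that \eqref{Z_1} is not actually needed for the summability identity.
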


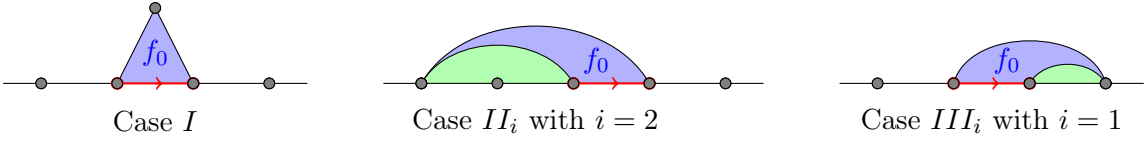
\begin{figure}
\begin{center}
\begin{tikzpicture}
\fill[blue!30] (1,0) -- (1.5,1)--(2,0)--(1,0);
\draw (-0.5,0) -- (0,0) node{};
\draw (0,0) node{} -- (1,0) node{};
\draw[red, thick, ->] (1,0) node{} -- (1.6,0);
\draw[red, thick] (1.5,0) -- (2,0) node{};
\draw (2,0) node{} -- (3,0) node{};
\draw (3,0) node{} -- (3.5,0);
\draw (1,0) node{}-- (1.5,1) node{};
\draw (2,0) node{}-- (1.5,1)node{};
\draw[blue] (1.5,0.4) node[texte]{$f_0$};
\draw (1.5,-0.5) node[texte]{Case $I$};

\begin{scope}[shift={(6,0)}]
\fill[blue!30] (1,0) to[bend right=60] (-1,0) to[bend left=60] (2,0)--(1,0);
\fill[green!30] (-1,0) to[bend left=60] (1,0)--(-1,0);
\draw (-1.5,0) -- (-1,0) node{};
\draw (-1,0) node{} -- (0,0) node{};
\draw (0,0) node{} -- (1,0) node{};
\draw[red, thick, ->] (1,0) node{} -- (1.6,0);
\draw[red, thick] (1.5,0) -- (2,0) node{};
\draw (2,0) node{} -- (3,0) node{};
\draw (3,0) node{} -- (3.5,0);
\draw (1,0) node{} to[bend right=60] (-1,0) node{};
\draw (2,0) node{} to[bend right=60] (-1,0)node{};
\draw[blue] (1.3,0.3) node[texte]{$f_0$};
\draw (0.5,-0.5) node[texte]{Case $II_i$ with $i=2$};
\end{scope}

\begin{scope}[shift={(11,0)}]
\fill[blue!30] (1,0) to[bend left=75] (3,0) to[bend right=60] (2,0)--(1,0);
\fill[green!30] (2,0) to[bend left=60] (3,0)--(2,0);
\draw (-0.5,0) -- (0,0) node{};
\draw (0,0) node{} -- (1,0) node{};
\draw[red, thick, ->] (1,0) node{} -- (1.6,0);
\draw[red, thick] (1.5,0) -- (2,0) node{};
\draw (2,0) node{} -- (3,0) node{};
\draw (3,0) node{} -- (3.5,0);
\draw (1,0) node{} to[bend left=75] (3,0) node{};
\draw (2,0) node{} to[bend left=60] (3,0)node{};
\draw[blue] (1.7,0.3) node[texte]{$f_0$};
\draw (1.5,-0.5) node[texte]{Case $III_i$ with $i=1$};
\end{scope}
\end{tikzpicture}
\end{center}
\caption{The three cases of peeling. In the two last cases, the parameter $i \geq 0$ corresponds to the number of edges of $\partial \mathbb{H}_{\lambda}$ that $f_0$ separates from infinity.} \label{construction_peeling}
\end{figure}
\begin{proof}
Uniqueness is stanard. To prove existence, we construct this triangulation by peeling along the same lines as in \cite{AR13}. If $\mathbb{H}_{\lambda}$ exists, let $f_0$ be its triangular face that is adjacent to the root. Then $f_0$ has one of the three forms described by Figure \ref{construction_peeling}.
Moreover, we have $\P \left( \mbox{Case $I$ occurs} \right) =\lambda \left(8+\frac{1}{h} \right)=\frac{1}{\sqrt{1+8h}}$. By summing over all possible ways to fill the green zone, we also have
\[ \P \left( \mbox{Case $II_i$ occurs} \right)=\P \left( \mbox{Case $III_i$ occurs} \right)=\left( 8+\frac{1}{h}\right)^{-i} w_{\lambda}(i+1) \]
for all $i \geq 0$. If we sum up these probabilities, we obtain
\begin{equation}
\frac{1}{\sqrt{1+8h}}+2 \sum_{i \geq 0} \left( 8+\frac{1}{h} \right)^{-i} w_{\lambda}(i+1) = \frac{1}{\sqrt{1+8h}}+2\left( 8+\frac{1}{h} \right) W_{\lambda} \left( \frac{h}{1+8h} \right)=1
\end{equation}
by \eqref{G}. Since these probabilities sum up to $1$, we can construct $\mathbb{H}_{\lambda}$ by peeling with the transitions described above. Everytime case $II_i$ or $III_i$ occurs, we fill the green bounded region with a Boltzmann triangulation of the $(i+1)$-gon with parameter $\lambda$. As in \cite{AR13} (see also \cite{CurPSHIT} in the planar case), we can check that we indeed obtain a triangulation of the halfplane, that its distribution does not depend on the choice of the peeling algorithm, and that the random triangulation we obtain has the right distribution.
\end{proof}

%It follows from the proof that $\mathbb{H}_{\lambda}$ has a spatial Markov property: in each of the three cases of Figure \ref{construction_peeling}, the infinite component of $\mathbb{H}_{\lambda} \backslash f_0$ has the same distribution as $\mathbb{H}_{\lambda}$.

We now state a coupling result between $\T_{\lambda}$ and $\mathbb{H}_{\lambda}$ similar to the one stated in \cite{ANR14} and (implicitly) \cite{CurPSHIT} in the type-II case. We recall that a \emph{peeling algorithm} is a way to assign to every triangulation with a hole an edge on its boundary (see e.g. \cite[Section 1.3]{CurPSHIT}). To any peeling algorithm is naturally associated a filled-in exploration of $\T_{\lambda}$. By \emph{filled-in}, we mean that every time the face just explored separates a finite region from infinity, the interior of the finite region is entirely discovered.

\begin{lem}\label{halfplane_peeling}
\begin{itemize}
\item[(i)]
For $0 < \lambda \leq \lambda_c$, the triangulation $\mathbb{H}_{\lambda}$ is the local limit as $p \to +\infty$ of $\T_{\lambda}^p$.
\item[(ii)]
For $0 < \lambda<\lambda_c$, consider a filled-in peeling algorithm $\mathscr{A}$ with infinitely many peeling steps, and let $T$ be the part of $\T_{\lambda}$ that is discovered by $\mathscr{A}$. Let $(T_i)_{i \in I}$ be the infinite connected components of $\T_{\lambda} \backslash T$ and, for every $i \in I$, let $e_i$ be an edge of $\partial T$ that is glued to $T_i$, chosen in a way that only depends on $T$. Then conditionally on $T$, the maps $T_i$ rooted at $e_i$ are independent copies of $\mathbb{H}_{\lambda}$.
\end{itemize}
\end{lem}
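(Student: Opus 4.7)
Both parts rely on the explicit form \eqref{Cformula} of $c_\lambda$ combined with the spatial Markov property of $\T_\lambda$, and part (ii) will be deduced from part (i) by iterating over peeling steps.

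For part (i), given a marked triangulation $t$ (whose marked boundary segment $\partial_{\mathrm{out}} t$ contains the root), applying the spatial Markov property of $\T_\lambda$ twice gives
\[
\P\bigl(t \subset \T_\lambda^p\bigr) \;=\; \lambda^{|t|_{\mathrm{in}}} \,\frac{c_\lambda\bigl(p - |\partial_{\mathrm{out}} t| + |\partial_{\mathrm{in}} t|\bigr)}{c_\lambda(p)} ,
\]
valid as soon as $p \geq |\partial_{\mathrm{out}} t|$. From \eqref{Cformula}, for every fixed $k \in \Z$ one has $c_\lambda(p+k)/c_\lambda(p) \to (8+1/h)^k$ as $p \to \infty$: when $\lambda<\lambda_c$ this is immediate since $\sum_{q\geq 0} \binom{2q}{q} h^q = (1-4h)^{-1/2}$ converges; when $\lambda=\lambda_c$ (so $h=1/4$ and $8+1/h=12$) one only needs $\sqrt{(p+k)/p}\to 1$. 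Substituting back gives $\P(t \subset \T_\lambda^p) \to \lambda^{|t|_{\mathrm{in}}} (8+1/h)^{|\partial_{\mathrm{in}} t|-|\partial_{\mathrm{out}} t|}$, which is precisely the defining formula for $\P(t\subset \mathbb{H}_\lambda)$. Since the local topology is determined by such marked events, this yields the local convergence $\T_\lambda^p \to \mathbb{H}_\lambda$.

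For part (ii), I iterate the spatial Markov property along the peeling. After $n$ steps, the finite explored region $T_n$ partitions $\T_\lambda \setminus T_n$ into infinite components, and conditionally on $T_n$ these components are independent, each being a copy of $\T_\lambda^{p_{n,j}}$ for the corresponding perimeter $p_{n,j}$. Now fix a final infinite component $T_i$ and let $(C_{n,i})_n$ denote the sequence of components of $\T_\lambda \setminus T_n$ containing it; its perimeter $p_{n,i}$ performs a random walk driven by the peeling transitions (the same as for $\mathbb{H}_\lambda$, derived in the preceding proposition). In the hyperbolic regime $\lambda<\lambda_c$ a direct computation shows that the associated drift is strictly positive, so $p_{n,i}\to+\infty$ along the steps at which the peeling touches $\partial C_{n,i}$, and this happens infinitely often precisely when $\partial T_i \subset \partial T$ is itself infinite (which is exactly the case for the infinite components of the statement, since components with finite boundary are absorbed into $T$ by the filling procedure). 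Combined with part (i), this shows that the conditional law of $T_i$ rooted at $e_i$ given $T_n$ converges to $\mathbb{H}_\lambda$; joint independence of the family $(T_i)_{i\in I}$ conditionally on $T$ then follows from the step-by-step conditional independence by a standard monotone class argument.

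The delicate point will be the limiting argument in (ii): carefully identifying each surviving infinite component $T_i$ with the nested limit $\bigcap_n C_{n,i}$, verifying that $p_{n,i}\to\infty$ almost surely (this is precisely where the strict inequality $\lambda<\lambda_c$ is used, rather than the critical case), and pushing the conditional independence through the limit $n \to \infty$ to obtain independence conditionally on the full exploration $T$.
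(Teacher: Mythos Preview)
Your proof of part (i) is correct and essentially identical to the paper's: both compute $\P(t\subset\T_\lambda^p)$ via the ratio $c_\lambda(p+|\partial_{\mathrm{in}}t|-|\partial_{\mathrm{out}}t|)/c_\lambda(p)$ and let $p\to\infty$ using the explicit form \eqref{Cformula}.

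For part (ii), however, there is a genuine gap rooted in a structural misconception. You write that after $n$ steps ``the finite explored region $T_n$ partitions $\T_\lambda\setminus T_n$ into infinite components'', which are then independent copies of $\T_\lambda^{p_{n,j}}$. But a filled-in peeling of a one-ended triangulation has, at every finite time $n$, a \emph{single} simply connected complement: $\T_\lambda\setminus T_n$ is one copy of $\T_\lambda^{P_n}$, not several. The splitting into the components $(T_i)_{i\in I}$ only occurs in the limit $n\to\infty$. (A related minor issue: the perimeter $P_n$ is not a random walk, since its transition probabilities depend on the current value through the ratios $c_\lambda(P_n+k)/c_\lambda(P_n)$; the relevant fact that $P_n\to\infty$ a.s.\ for $\lambda<\lambda_c$ is a known property of the hyperbolic peeling process, not a drift computation for an i.i.d.\ increment walk.)

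This matters because your independence argument relies on ``step-by-step conditional independence'' of the several components, which does not exist. The correct route (which the paper only alludes to, referring to \cite{ANR14}) is to work inside the single component $\T_\lambda^{P_n}$: once $n$ is large enough that the chosen edges $e_1,\dots,e_k$ all lie on $\partial T_n$ and are far enough apart that marked triangulations $t_1,\dots,t_k$ glued at these edges do not overlap, the spatial Markov property gives
\[
\P\bigl(t_1\subset T_1,\dots,t_k\subset T_k \,\big|\, T_n\bigr)=\lambda^{\sum_j |t_j|_{\mathrm{in}}}\,\frac{c_\lambda\!\left(P_n+\sum_j(|\partial_{\mathrm{in}}t_j|-|\partial_{\mathrm{out}}t_j|)\right)}{c_\lambda(P_n)},
\]
and letting $P_n\to\infty$ this factorises as $\prod_j \P(t_j\subset\mathbb{H}_\lambda)$. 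That is where the asymptotic independence actually comes from.
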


\begin{proof}
\begin{itemize}
\item[(i)]
If $p \geq 1$ and $t$ is a marked triangulation with $|\partial_{\mathrm{out}} t| \leq p$, let $t_0$ be a triangulation with a hole of perimeter $p$ and let $t_0+t$ be a triangulation obtained by gluing $\partial_{\mathrm{out}} t$ to a segment of $\partial t_0$. Then by the definition of $\T_{\lambda}^p$, we have
\begin{eqnarray*}
\P \left( t \subset \T_{\lambda}^p \right)=\frac{ \P \left( t_0+t \subset \T_{\lambda} \right)}{\P \left( t_0 \subset \T_{\lambda} \right)} &=& \frac{c_{\lambda}(p+|\partial_{\mathrm{in}} t|-|\partial_{\mathrm{out}} t|)} {c_{\lambda}(p)} \lambda^{|t|_{\mathrm{in}}}\\
&\xrightarrow[p \to +\infty]{}& \left( 8+\frac{1}{h} \right)^{|\partial_{\mathrm{in}} t| - |\partial_{\mathrm{out}} t|} \lambda^{|t|_{\mathrm{in}}}\\
&=& \P \left( t \subset \mathbb{H}_{\lambda} \right),
\end{eqnarray*}
which is enough to conclude.
%The tightness follows from the fact that the distribution of $\mathbb{H}_{\lambda}$ is a probability measure so no mass can "escape" to infinity.
\item[(ii)]
We first note that there are infinitely many peeling steps and all the finite holes are filled-in, so every connected component of $\mathbb{T}_{\lambda} \backslash T$ is halfplanar. The second point then follows from the first one since the perimeter of the region discovered after $n$ peeling steps a.s. goes to $+\infty$ as $n \to +\infty$. See the proof of Lemma 2.16 in \cite{ANR14} for the same result in the type-II case. Note that for $\lambda= \lambda_c$, we have $T=\T_{\lambda}$ a.s. by Corollary 7 of \cite{CLGpeeling}, so the statement (ii) is irrelevant.
\end{itemize}
\end{proof}

\section{The skeleton decomposition of hyperbolic triangulations}

The aim of this section is to prove Theorem \ref{thm1_GW}. It is organized as follows. In Section 2.1, we describe the finite skeleton decomposition, which associates to every finite triangulation a finite forest, and its infinite counterpart. In Section 2.2, we compute the distribution of the skeletons of the hulls of $\T_{\lambda}$. This characterizes the skeleton of $\T_{\lambda}$ entirely, but in a form that is not convenient for the proof of Theorem \ref{thm1_GW}. In Section 2.3, we explain why the infinite skeleton of $\T_{\lambda}$ is related to infinite leftmost geodesics in the triangulation. Section 2.4 contains a description of the strips $S^0_{\lambda}$ and $S^1_{\lambda}$ by the distribution of their hulls, without a proof of their existence. In Section 2.5, we use all that precedes to prove Theorem \ref{thm1_GW}. Finally, Section 2.6 is devoted to the construction of $S^0_{\lambda}$ and $S^1_{\lambda}$.

\subsection{The skeleton decomposition of finite and infinite triangulations}

\paragraph{The finite setting: skeleton decomposition of triangulations of the cylinder.}
We first recall the skeleton decomposition of triangulations introduced by Krikun \cite{Kri04, Kri05} for type-II triangulations and quadrangulations, and described in \cite{CLGmodif} for type-I triangulations (see also \cite{AR18,M16}). This decomposition applies to so-called triangulations of the cylinder. Most of the presentation here is adapted from \cite{CLGmodif}.

\begin{defn}
Let $r \geq 1$. A \emph{triangulation of the cylinder of height $r$} is a rooted planar map in which all faces are triangles except two distinguished faces called the top and the bottom faces, such that the following properties hold. The boundaries of the top and bottom faces are simple cycles. The bottom face lies on the right of the root edge. Finally, every vertex incident to the top face is at distance $r$ from the boundary of the bottom face, and every edge adjacent to the top face is also adjacent to a face whose third vertex is at distance $r-1$ from the boundary of the bottom face.
\end{defn}

If $\Delta$ is a triangulation of the cylinder of height $r$, we write $\partial \Delta$ and $\partial_* \Delta$ for the boundaries of the bottom and top faces. Let $p=|\partial \Delta|$ and $q=|\partial_* \Delta|$. The skeleton decomposition encodes $\Delta$ by a forest of $q$ plane trees and a family of triangulations of polygons indexed by the vertices of this forest. For $1 \leq j \leq r-1$, we define the ball $B_j(\Delta)$ to be the map formed by all the faces of $\Delta$ having at least one vertex at distance at most $j-1$ from $\partial \Delta$, along with their vertices and edges. We also define the hull $B_j^{\bullet} (\Delta)$ as the union of $B_j(\Delta)$ and all the connected components of its complement, except the one that contains $\partial_* \Delta$. It is easy to see that $B_j^{\bullet} (\Delta)$ is a triangulation of the cylinder of height $j$. We denote by $\partial_j \Delta$ the top boundary of $B_j^{\bullet}(\Delta)$, with the conventions $\partial_0 \Delta=\partial \Delta$ and $\partial_r \Delta=\partial_* \Delta$.

If $1 \leq j \leq r$, every edge of $\partial_j \Delta$ is incident to exactly one triangle whose third vertex belongs to $\partial_{j-1} \Delta$. Such triangles are called \emph{downward triangles at height $j$}. We can define a genealogy on $\bigcup_{j=0}^r \partial_j \Delta$ by saying that $e \in \partial_j \Delta$ for $j \geq 1$ is the parent of $e' \in \partial_{j-1} \Delta$ if the downward triangle adjacent to $e$ is the first one that one encounters when moving along $\partial_{j-1} \Delta$ in clockwise order starting from the middle of the edge $e'$ (see Figure \ref{Skeleton_decomposition}). We obtain $q$ trees rooted on $\partial_* \Delta$. Let $(t_1, \dots, t_q)$ be the forest obtained by listing these trees in clockwise order in such a way that the root edge of $\Delta$ lies in $t_1$. This forest is called the \emph{skeleton of $\Delta$} and we denote it by $\mathrm{Skel}(\Delta)$. Note that $t_1$ has a distinguished vertex at height $r$. The set of possible values of $\mathrm{Skel}(\Delta)$ is called the set of \emph{$(p,q,r)$-admissible forests} and is described by the next definition.

\begin{defn}\label{admissible}
Let $p,q,r \geq 1$: a $(p,q,r)$-pre-admissible forest is a sequence $f=(t_1, \dots, t_q)$ of plane trees equipped with a distinguished vertex $\rho$ such that:
\begin{itemize}
\item
the maximal height of the trees $t_i$ is $r$,
\item
the total number of vertices at height $r$ in the trees $t_i$ is $p$,
\item
$\rho$ lies at height $r$.
\end{itemize}
We write $\F_{p,q,r}$ for the set of $(p,q,r)$-pre-admissible forests. If furthermore $\rho \in t_1$, we say that $f$ is $(p,q,r)$-admissible, and we write $\F'_{p,q,r}$ for the set of $(p,q,r)$-admissible forests.
\end{defn}

Let $f=(t_1, \dots, t_p) \in \F_{p,q,r}$. Most of the time, we will represent $f$ with the roots of the trees $t_i$ on the top. Hence, if $x \in t_i$ is a vertex of $f$, we define the \emph{reverse height of $x$ in $f$} as $r$ minus the height of $x$ in $t_i$, and we write it $h_f^{\mathrm{rev}}(v)$. In particular, the roots of the trees $t_i$ have reverse height $r$ and $\rho$ has reverse height $0$. Although quite unusual, this convention is natural because the reverse heights in $\mathrm{Skel}(\Delta)$ match the distances to the root in the triangulation $\Delta$. It will also be more convenient when we will deal with reverse forests with infinite height.

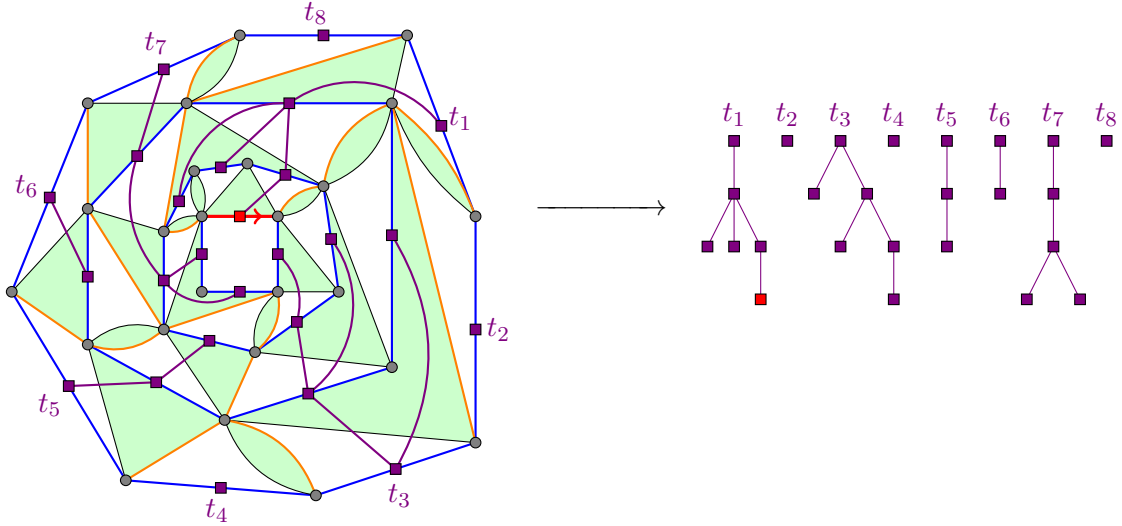
\begin{figure}
\begin{center}
\begin{tikzpicture}
\fill[green!20] (-0.5,-0.5)--(0,1)--(0,0)--(1,0);
\fill[green!20] (1.8,0)--(1,1)--(1,0);
\fill[green!20] (0,1)--(1,1)--(0.6,1.7);
\fill[green!20] (1.6,1.4)--(2.5,-1)--(0.7,-0.8)--(1.8,0);
\fill[green!20] (0.3,-1.7)--(0.7,-0.8)--(-0.5,-0.5);
\fill[green!20] (-1.5,1.1)--(-0.5,0.8)--(-0.5,-0.5);
\fill[green!20] (-0.5,0.8)--(-0.2,2.5)--(1.6,1.4)--(0.6,1.7)--(-0.1,1.6);
\fill[green!20] (-0.2,2.5)--(2.7,3.4)--(2.5,2.5);
\fill[green!20] (0.3,-1.7)--(3.6,-2)--(2.5,2.5)--(2.5,-1);
\fill[green!20] (0.3,-1.7)--(-1,-2.5)--(-1.5,-0.7);
\fill[green!20] (-1.5,-0.7)--(-2.5,0)--(-1.5,1.1);
\fill[green!20] (-1.5,1.1)--(-1.5,2.5)--(-0.2,2.5);
\fill[green!20] (0,1)to[bend left](-0.1,1.6)to[bend left](0,1);
\fill[green!20] (0,1)to[bend left](-0.5,0.8)to[bend left](0,1);
\fill[green!20] (1,0)to[bend left](0.7,-0.8)to[bend left](1,0);
\fill[green!20] (1,1)to[bend left](1.6,1.4)to[bend left](1,1);
\fill[green!20] (2.5,2.5)to[bend right](1.6,1.4)to[bend right] (2.5,2.5);
\fill[green!20] (-1.5,-0.7)to[bend right](-0.5,-0.5)to[bend right] (-1.5,-0.7);
\fill[green!20](2.5,2.5)to[bend left=15](3.6,1)to[bend left=15](2.5,2.5);
\fill[green!20](0.3,-1.7)to[bend left](1.5,-2.7)to[bend left](0.3,-1.7);
\fill[green!20](-0.2,2.5)to[bend left](0.5,3.4)to[bend left](-0.2,2.5);

\draw[red, very thick, ->] (0,1) --(0.8,1);
\draw[red, very thick] (0.7,1) --(1,1);
\draw[blue, thick] (0,0) --(1,0);
\draw[blue, thick] (0,0) --(0,1);
\draw[blue, thick] (1,1) --(1,0);

\draw[blue, thick] (-0.5,-0.5)--(0.7,-0.8);
\draw[blue, thick] (1.8,0)--(0.7,-0.8);
\draw[blue, thick] (1.8,0)--(1.6,1.4);
\draw[blue, thick] (0.6,1.7)--(1.6,1.4);
\draw[blue, thick] (0.6,1.7)--(-0.1,1.6);
\draw[blue, thick] (-0.5,0.8)--(-0.1,1.6);
\draw[blue, thick] (-0.5,0.8)--(-0.5,-0.5);

\draw[blue, thick] (-1.5,1.1)--(-1.5,-0.7);
\draw[blue, thick] (-1.5,1.1)--(-0.2,2.5);
\draw[blue, thick] (2.5,2.5)--(-0.2,2.5);
\draw[blue, thick] (2.5,2.5)--(2.5,-1);
\draw[blue, thick] (0.3,-1.7)--(2.5,-1);
\draw[blue, thick] (0.3,-1.7)--(-1.5,-0.7);

\draw[blue, thick] (-2.5,0)--(-1.5,2.5);
\draw[blue, thick] (0.5,3.4)--(-1.5,2.5);
\draw[blue, thick] (0.5,3.4)--(2.7,3.4);
\draw[blue, thick] (3.6,1)--(2.7,3.4);
\draw[blue, thick] (3.6,1)--(3.6,-2);
\draw[blue, thick] (1.5,-2.7)--(3.6,-2);
\draw[blue, thick] (1.5,-2.7)--(-1,-2.5);
\draw[blue, thick] (-2.5,0)--(-1,-2.5);

\draw(0,1)--(0.6,1.7);
\draw(0,1)to[bend left](-0.1,1.6);
\draw(0,1)to[bend right](-0.1,1.6);
\draw[thick, orange](0,1)to[bend left](-0.5,0.8);
\draw(0,1)to[bend right](-0.5,0.8);
\draw(0,1)--(-0.5,-0.5);
\draw[thick, orange](1,0)--(-0.5,-0.5);
\draw[thick, orange](1,0)to[bend left](0.7,-0.8);
\draw(1,0)to[bend right](0.7,-0.8);
\draw(1,0)--(1.8,0);
\draw(1,1)--(1.8,0);
\draw[thick, orange](1,1)to[bend left](1.6,1.4);
\draw(1,1)to[bend right](1.6,1.4);
\draw(1,1)--(0.6,1.7);

\draw(-0.2,2.5)--(1.6,1.4);
\draw[thick, orange](2.5,2.5)to[bend right](1.6,1.4);
\draw(2.5,2.5)to[bend left](1.6,1.4);
\draw(2.5,-1)--(1.6,1.4);
\draw(2.5,-1)--(0.7,-0.8);
\draw[thick, orange](0.3,-1.7)--(0.7,-0.8);
\draw(0.3,-1.7)--(-0.5,-0.5);
\draw[thick, orange](-1.5,-0.7)to[bend right](-0.5,-0.5);
\draw(-1.5,-0.7)to[bend left](-0.5,-0.5);
\draw[thick, orange](-1.5,1.1)--(-0.5,-0.5);
\draw(-1.5,1.1)--(-0.5,0.8);
\draw[thick, orange](-0.2,2.5)--(-0.5,0.8);

\draw[thick, orange](-0.2,2.5)--(2.7,3.4);
\draw(2.5,2.5)--(2.7,3.4);
\draw[thick, orange](2.5,2.5)to[bend left=15](3.6,1);
\draw(2.5,2.5)to[bend right=15](3.6,1);
\draw[thick, orange](2.5,2.5)--(3.6,-2);
\draw(0.3,-1.7)--(3.6,-2);
\draw[thick, orange](0.3,-1.7)to[bend left](1.5,-2.7);
\draw(0.3,-1.7)to[bend right](1.5,-2.7);
\draw[thick, orange](0.3,-1.7)--(-1,-2.5);
\draw(-1.5,-0.7)--(-1,-2.5);
\draw[thick, orange](-1.5,-0.7)--(-2.5,0);
\draw(-1.5,1.1)--(-2.5,0);
\draw[thick, orange](-1.5,1.1)--(-1.5,2.5);
\draw(-0.2,2.5)--(-1.5,2.5);
\draw[thick, orange](-0.2,2.5)to[bend left](0.5,3.4);
\draw(-0.2,2.5)to[bend right](0.5,3.4);

\draw[violet, thick] (0,0.5)--(-0.5,0.15);
\draw[violet, thick] (0.5,1)--(1.1,1.55);
\draw[violet, thick] (1,0.5) to[bend left] (1.25,-0.4);
\draw[violet, thick] (0.5,0) to[bend left=45] (-0.5,0.15);

\draw[violet, thick] (0.1,-0.65)--(-0.6,-1.2);
\draw[violet, thick] (1.25,-0.4)--(1.4,-1.35);
\draw[violet, thick] (1.7,0.7) to[bend left=45] (1.4,-1.35);
\draw[violet, thick] (1.1,1.55)--(1.15,2.5);
\draw[violet, thick] (0.25,1.65)--(1.15,2.5);
\draw[violet, thick] (-0.3,1.2) to[bend left=45] (1.15,2.5);
\draw[violet, thick] (-0.5,0.15) to[bend left] (-0.85,1.8);

\draw[violet, thick] (-1.5,0.2)--(-2,1.25);
\draw[violet, thick] (-0.85,1.8)--(-0.5,2.95);
\draw[violet, thick] (1.15,2.5) to[bend left=45] (3.15,2.2);
\draw[violet, thick] (2.5,0.75) to[bend left] (2.55,-2.35);
\draw[violet, thick] (1.4,-1.35)--(2.55,-2.35);
\draw[violet, thick] (-0.6,-1.2)--(-1.75,-1.25);

\draw (0,0.5) node[edge]{};
\draw (0.5,1) node[rootedge]{};
\draw (1,0.5) node[edge]{};
\draw (0.5,0) node[edge]{};

\draw (0.1,-0.65) node[edge]{};
\draw (1.25,-0.4) node[edge]{};
\draw (1.7,0.7) node[edge]{};
\draw (1.1,1.55) node[edge]{};
\draw (0.25,1.65) node[edge]{};
\draw (-0.3,1.2) node[edge]{};
\draw (-0.5,0.15) node[edge]{};

\draw (-1.5,0.2) node[edge]{};
\draw (1.15,2.5) node[edge]{};
\draw (2.5,0.75) node[edge]{};
\draw (1.4,-1.35) node[edge]{};
\draw (-0.6,-1.2) node[edge]{};
\draw (-0.85,1.8) node[edge]{};

\draw (-2,1.25) node[edge]{};
\draw (-0.5,2.95) node[edge]{};
\draw (1.6,3.4) node[edge]{};
\draw (3.15,2.2) node[edge]{};
\draw (3.6,-0.5) node[edge]{};
\draw (2.55,-2.35) node[edge]{};
\draw (0.25,-2.6) node[edge]{};
\draw (-1.75,-1.25) node[edge]{};

\draw (0,0)node{};
\draw (0,1)node{};
\draw (1,1)node{};
\draw (1,0)node{};

\draw (-0.5,-0.5)node{};
\draw (0.7,-0.8)node{};
\draw (1.8,0)node{};
\draw (1.6,1.4)node{};
\draw (0.6,1.7)node{};
\draw (-0.1,1.6)node{};
\draw (-0.5,0.8)node{};

\draw (-1.5,-0.7)node{};
\draw (-1.5,1.1)node{};
\draw (-0.2,2.5)node{};
\draw (2.5,2.5)node{};
\draw (2.5,-1)node{};
\draw (0.3,-1.7)node{};

\draw (-2.5,0)node{};
\draw (-1.5,2.5)node{};
\draw (0.5,3.4)node{};
\draw (2.7,3.4)node{};
\draw (3.6,1)node{};
\draw (3.6,-2)node{};
\draw (1.5,-2.7)node{};
\draw (-1,-2.5)node{};

\draw[violet] (3.4,2.3) node[texte]{$t_1$};
\draw[violet] (3.9,-0.5) node[texte]{$t_2$};
\draw[violet] (2.6,-2.7) node[texte]{$t_3$};
\draw[violet] (0.2,-2.9) node[texte]{$t_4$};
\draw[violet] (-2,-1.5) node[texte]{$t_5$};
\draw[violet] (-2.3,1.4) node[texte]{$t_6$};
\draw[violet] (-0.6,3.3) node[texte]{$t_7$};
\draw[violet] (1.5,3.7) node[texte]{$t_8$};

\begin{scope}[shift={(7,2)}, scale=0.7]

\draw[violet] (0,0)--(0,-1);
\draw[violet] (0,-1)--(-0.5,-2);
\draw[violet] (0,-1)--(0,-2);
\draw[violet] (0,-1)--(0.5,-2);
\draw[violet] (0.5,-2)--(0.5,-3);
\draw[violet] (2,0)--(1.5,-1);
\draw[violet] (2,0)--(2.5,-1);
\draw[violet] (2.5,-1)--(2,-2);
\draw[violet] (2.5,-1)--(3,-2);
\draw[violet] (3,-2)--(3,-3);
\draw[violet] (4,0)--(4,-1);
\draw[violet] (4,-1)--(4,-2);
\draw[violet] (5,0)--(5,-1);
\draw[violet] (6,0)--(6,-1);
\draw[violet] (6,-1)--(6,-2);
\draw[violet] (6,-2)--(5.5,-3);
\draw[violet] (6,-2)--(6.5,-3);

\draw (-2.5,-1.5) node[texte]{$\xrightarrow[\hspace{1.5cm}]{}$};
\draw (0,0) node[edge]{};
\draw (0,-1) node[edge]{};
\draw (-0.5,-2) node[edge]{};
\draw (0,-2) node[edge]{};
\draw (0.5,-2) node[edge]{};
\draw (0.5,-3) node[rootedge]{};
\draw (1,0) node[edge]{};
\draw (2,0) node[edge]{};
\draw (1.5,-1) node[edge]{};
\draw (2.5,-1) node[edge]{};
\draw (2,-2) node[edge]{};
\draw (3,-2) node[edge]{};
\draw (3,-3) node[edge]{};
\draw (3,0) node[edge]{};
\draw (4,0) node[edge]{};
\draw (4,-1) node[edge]{};
\draw (4,-2) node[edge]{};
\draw (5,0) node[edge]{};
\draw (5,-1) node[edge]{};
\draw (6,0) node[edge]{};
\draw (6,-1) node[edge]{};
\draw (6,-2) node[edge]{};
\draw (5.5,-3) node[edge]{};
\draw (6.5,-3) node[edge]{};
\draw (7,0) node[edge]{};

\draw[violet] (0,0.5) node[texte]{$t_1$};
\draw[violet] (1,0.5) node[texte]{$t_2$};
\draw[violet] (2,0.5) node[texte]{$t_3$};
\draw[violet] (3,0.5) node[texte]{$t_4$};
\draw[violet] (4,0.5) node[texte]{$t_5$};
\draw[violet] (5,0.5) node[texte]{$t_6$};
\draw[violet] (6,0.5) node[texte]{$t_7$};
\draw[violet] (7,0.5) node[texte]{$t_8$};
\end{scope}

\end{tikzpicture}
\end{center}
\caption{A triangulation of the cylinder $\Delta$ and its skeleton $\mathrm{Skel}(\Delta)  \in \F'_{4,8,3}$ in purple. The cycles $\partial_j \Delta$ are in blue, and the root edge is in red. The leftmost geodesics from the vertices of $\partial_* \Delta$ to $\partial \Delta$ are in orange. The green holes must be filled by triangulations of polygons.} \label{Skeleton_decomposition}
\end{figure}

The forest $\mathrm{Skel}(\Delta)$ is not enough to completely describe $\Delta$: if we consider all the downward triangles of $\Delta$, there is a family of holes, each of which is naturally associated to an edge of $\bigcup_{j=0}^r \partial_j \Delta$. If $e \in \partial_j \Delta$ with $1 \leq j \leq r$, the associated hole is bounded by the edges of $\partial_{j-1} \Delta$ that are children of $e$ and by two vertical edges connecting the initial vertex of $e$ to two vertices of $\partial_{j-1} \Delta$. This hole has perimeter $c_e+2$, where $c_e$ is the number of children of $e$, so it must be filled by a triangulation of a $(c_e+2)$-gon. If $c_e=0$, it is possible that the hole of perimeter $2$ is filled by the triangulation of the $2$-gon consisting of a single edge, which means that the two vertical edges are simply glued together.

If $f$ is a $(p,q,r)$-admissible forest, let $f^*$ be the set of those vertices $v$ of $f$ such that $h^{\mathrm{rev}}_f(v)>0$. The decomposition we just described is a bijection between triangulations of the cylinder $\Delta$ with height $r$ such that $\partial \Delta=p$ and $\partial_* \Delta=q$, and pairs consisting of a $(p,q,r)$-admissible forest $f$ and a family $(M_v)_{v \in f^*}$ of maps such that $M_v$ is a finite triangulation of a $(c_v+2)$-gon for every $v$.

Moreover, this decomposition encodes informations about leftmost geodesics from the vertices of $\partial_* \Delta$ to $\partial \Delta$: as can be seen on Figure \ref{Skeleton_decomposition}, these geodesics (in orange on Figure \ref{Skeleton_decomposition}) are the paths going between the trees of $\mathrm{Skel}(\Delta)$. These geodesics cut $\Delta$ into $q$ slices, each of which contains an edge of $\partial_* \Delta$. Moreover, the slice containing the $i$-th edge of $\partial_* \Delta$ can be completely described by the $i$-th tree $t_i$ of $\mathrm{Skel}(\Delta)$ and the maps $M_v$ for $v \in t_i$.

\paragraph{The infinite setting: infinite reverse forests.}
If $f \in \mathscr{F}_{p,q,r}$ and $0 \leq j \leq r$, let $x_1^j, \dots, x_s^j$ be the vertices of $f$ lying at reverse height $j$ in $f$, from left to right. For every $1 \leq i \leq s$, let $t_i^j$ be the tree of descendants of $x_i^j$. Let also $i_0$ be the index such that the distinguished vertex belongs to $t^j_{i_0}$. We define $B_j(f)$ as the forest $(t_1^j, \dots, t_s^j)$, with the same distinguished vertex as $f$ (see Figure \ref{ballforest}), and $B'_j(f)$ as the forest $(t_{i_0}^j, t_{i_0+1}^j, \dots, t_s^j, t_1^j, \dots, t_{i_0-1}^j)$. Note that $B_j(f)  \in \F_{p,s,j}$ and $B'_j(f)  \in \F'_{p,s,j}$, but it is not always the case that $B_j(f) \in \F'_{p,s,j}$. Note also that if $v$ is a vertex of $B_j(f)$, then $h^{\mathrm{rev}}_{B_j(f)}(x)=h^{\mathrm{rev}}_{B'_j(f)}(x)=h^{\mathrm{rev}}_{f}(x)$.

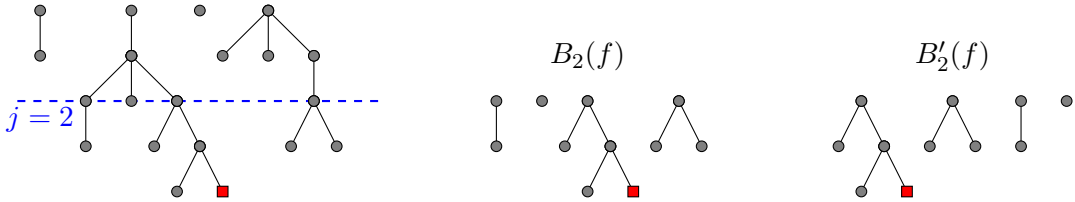
\begin{figure}
\begin{center}
\begin{tikzpicture}[scale=0.6]
\draw[thick, blue, dashed] (-2.5,2)--(5.5,2);

\draw (-2,4) node{}--(-2,3) node{};

\draw (0,4) node{}--(0,3) node{};
\draw (0,3) node{}--(0,2) node{};
\draw (0,3) node{}--(1,2) node{};
\draw (0,3) node{}--(-1,2) node{};
\draw (1,2) node{}--(0.5,1) node{};
\draw (1,2) node{}--(1.5,1) node{};
\draw (-1,2) node{}--(-1,1) node{};
\draw (1.5,1) node{}--(1,0) node{};
\draw (1.5,1) node{}--(2,0);

\draw(2,0) node[rootedge]{};
\draw (1.5,4) node{};

\draw (3,4) node{}--(2,3) node{};
\draw (3,4) node{}--(3,3) node{};
\draw (3,4) node{}--(4,3) node{};
\draw (4,3) node{}--(4,2) node{};
\draw (4,2) node{}--(3.5,1) node{};
\draw (4,2) node{}--(4.5,1) node{};

\draw[blue] (-2,1.6) node[texte]{$j=2$};

\begin{scope}[shift={(8,0)}]
\draw (2,2) node{}--(1.5,1) node{};
\draw (2,2) node{}--(2.5,1) node{};
\draw (2.5,1) node{}--(2,0) node{};
\draw (2.5,1) node{}--(3,0);
\draw (1,2) node{};
\draw (0,2) node{}--(0,1) node{};
\draw (4,2) node{}--(3.5,1) node{};
\draw (4,2) node{}--(4.5,1) node{};
\draw (3,0) node[rootedge]{};
\draw (2,3) node[texte]{$B_2(f)$};
\end{scope}

\begin{scope}[shift={(16,0)}]
\draw (0,2) node{}--(-0.5,1) node{};
\draw (0,2) node{}--(0.5,1) node{};
\draw (0.5,1) node{}--(0,0) node{};
\draw (0.5,1) node{}--(1,0);
\draw (4.5,2) node{};
\draw (3.5,2) node{}--(3.5,1) node{};
\draw (2,2) node{}--(1.5,1) node{};
\draw (2,2) node{}--(2.5,1) node{};
\draw (1,0) node[rootedge]{};
\draw (2,3) node[texte]{$B'_2(f)$};
\end{scope}
\end{tikzpicture}
\end{center}
\caption{A forest $f$ of height $4$ and the forests $B_2(f)$ and $B'_2(f)$. The ancestors lie on the top and the distinguished vertex is in red.}\label{ballforest}
\end{figure}

\begin{defn}\label{defn_reverse_forest}
Let $p \geq 1$. A \emph{$p$-pre-admissible infinite forest} is a sequence $\mathbf{f}=(f_r)_{r \geq 1}$ of forests of plane trees such that
\begin{itemize}
\item[(i)]
for every $r \geq 1$, there is $q \geq 1$ such that the forest $f_r$ is $(p,q,r)$-pre-admissible,
\item[(ii)]
for every $s \geq r \geq 1$, we have $B_r(f_{s})=f_r$.
\end{itemize}
We write $\F_{p, \infty, \infty}$ for the set of $p$-pre-admissible infinite forests. We will also write $B_r(\mathbf{f})=f_r$ for $r \geq 1$.
\end{defn}

Note that $\mathbf{f} \in \F_{p,\infty,\infty}$ can also be seen as an increasing sequence of finite graphs, and therefore as an infinite graph. We call \emph{infinite reverse trees} the connected components of $\mathbf{f}$. If $v$ is a vertex of $\mathbf{f}$, we have $v \in f_r$ for $r$ large enough. Note that $h^{\mathrm{rev}}_{f_r}(v)$ does not depend on $r$ as long as $v \in f_r$. We call it the \emph{reverse height of $v$ in $\mathbf{f}$} and denote it by $h^{\mathrm{rev}}_{\mathbf{f}}(v)$. We also write $\mathbf{f}^*$ for the set of vertices of $\mathbf{f}$ that do not lie at reverse height $0$.

\begin{defn}
A $p$-pre-admissible forest  $\mathbf{f}$ is called \emph{$p$-admissible} if the distinguished vertex lies in the leftmost infinite tree of $\mathbf{f}$, i.e. if for every $r \geq 0$, the leftmost vertex of $\mathbf{f}$ at reverse height $r$ is in the same infinite tree as the distinguished vertex.
We write $\F'_{p, \infty, \infty}$ for the set of $p$-admissible infinite forests.
\end{defn}

%More formally, if $B_r(\mathbf{f})=(t_1, \dots, t_{q_r})$, for $1 \leq i,j \leq q_r$, we write $i \sim_r j$ if there is an $r' >r$ such that the trees $t_i$ and $t_j$ lie in the same tree in the forest $B_{r'}(\mathbf{f})$. It is easy to see that all the equivalence classes are of the form $[\![a,b]\!]$. Let $i_0(r)$ be the index such that the distinguished vertex of $B_r(\mathbf{f})$ lies in $t_{i_0(r)}$. We say that $\mathbf{f}$ is \emph{$p$-admissible} if for every $r$, we have $1 \sim_r i_0(r)$.  We also call $\mathbf{f}$ an \emph{infinite reverse tree} if it is connected, i.e. $i \sim_r j$ for every $r \geq 0$ and $1 \leq i,j \leq q_r$.

\paragraph{Skeleton decomposition of infinite triangulations of the $p$-gon.}
We now introduce the skeleton decomposition of infinite triangulations of the $p$-gon. Let $T$ be an infinite, one-ended triangulation of the $p$-gon. For every $r \geq 1$, the hull $B_r^{\bullet}(T)$ is a triangulation of the cylinder of height $r$, so we can define its skeleton $f'_r=\mathrm{Skel} \left(  B_r^{\bullet}(T) \right)\in \F'_{p,q,r}$ for some $q$. It is also easy to see that the forests $f'_r$ are consistent in the sense that $B'_r(f'_{s})=f'_r$ for every $s \geq r \geq 1$.
We claim that such a family $(f'_r)$ always defines an infinite $p$-admissible forest. More precisely, if $\mathbf{f} \in \F'_{p,\infty,\infty}$ and $r \geq 1$, let $B'_r(\mathbf{f})$ be the reordered ball of radius $r$ in $\mathbf{f}$ (that is, the ball $B_r(\mathbf{f})$ in which the trees have been cyclically permutated so that the distinguished vertex lies in the first tree). Then there is a unique $\mathbf{f} \in \F'_{p,\infty,\infty}$ such that $B'_r(\mathbf{f})=f'_r$ for every $r \geq 1$. We do not prove this formally, but explain how to build $\mathbf{f}$ from $(f'_r)$: for any $s \geq r$, the forest $B_r(f'_{s})$ is a cyclic permutation of $f'_r$, and this cyclic permutation does not depend on $s$ for $s$ large enough. Hence, we can set $f_r=B_r(f'_{s})$ for $s$ large enough and $\mathbf{f}=(f_r)_{r \geq 1}$.
Therefore, there is a unique $p$-admissible forest, that we denote by $\mathrm{Skel}(T)$ and call the \emph{skeleton} of $T$, such that $B'_r \left( \mathrm{Skel}(T) \right)= \mathrm{Skel} \left(  B_r^{\bullet}(T) \right)$ for every $r \geq 1$.
As in the finite case, the skeleton decomposition establishes a bijection between one-ended infinite triangulations of the $p$-gon and pairs consisting of a $p$-admissible infinite forest $\mathbf{f}$ and a family $(M_v)_{v \in \mathbf{f}^*}$ of maps such that $M_v$ is a finite triangulation of a $(c_v+2)$-gon for every $v$.

\begin{rem}
In order to define the skeleton decomposition, it would have been more convenient to define an infinite reverse forest $\mathbf{f}$ as the sequence $\left( B'_r(\mathbf{f}) \right)$ instead of $\left( B_r(\mathbf{f}) \right)$. The reason why we chose this definition is that it will later make the decomposition of an infinite forest in infinite reverse trees much more convenient to define.
\end{rem}

\subsection{Computation of the skeleton decomposition of the hulls of \texorpdfstring{$\T_{ \lambda}$}{TEXT}}

We now compute the law of the skeletons of the hulls of $\T_{\lambda}^p$. The map $\T_{\lambda}^p$ can be seen as a triangulation of the cylinder with infinite height. For every $r \geq 1$, the map $B_r^{\bullet}(\T^p_{\lambda})$ is a triangulation of the cylinder of height $r$ with bottom boundary length equal to $p$.

\begin{lem} \label{skeleton}
Let $0<\lambda \leq \lambda_c$. Let $\Delta$ be a triangulation of the cylinder of height $r$. We write $p$ (resp. $q$) for the length of its bottom (resp. top) boundary. Let $f=\mathrm{Skel}(\Delta) \in \F'_{p,q,r}$. Then
\[ \mathbb{P} \left( B_r^{\bullet}(\T^p_{\lambda})=\Delta \right)=\frac{q \, h_{\lambda}(q)}{p \, h_{\lambda}(p)} \prod_{v \in f^*} \theta_{\lambda}(c_v) \prod_{v \in f^*} \frac{\lambda^{|M_v|}}{w_{\lambda}(c_v+2)},\]
where:
\begin{itemize}
\item
$c_v$ is the number of children of $v$ in $f$,
\item
$h \in \left( 0,\frac{1}{4} \right]$ is given by \eqref{eqn_h_lambda},
\item
$h_{\lambda}(p)=\frac{1}{p} \left( 8+\frac{1}{h}\right)^{-p} c_{\lambda}(p)$,
\item
$\theta_{\lambda}$ is the offspring distribution whose generating function $g_{\lambda}$ is given by
\begin{equation}\label{generating}
 g_{\lambda}(x)=\sum_{i \geq 0} \theta_{\lambda}(i) x^i=\frac{1}{x}-\frac{(1-x)(1-\sqrt{1-4hx})}{2h x^2}.
\end{equation}
\end{itemize}
\end{lem}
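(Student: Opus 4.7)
The plan is to compute $\P(B_r^{\bullet}(\T^p_\lambda) = \Delta)$ directly from the spatial Markov property and then factor the result along the skeleton $f$. Since $\Delta$ is a triangulation of the cylinder of height $r$, any one-ended filling of its top $q$-gon yields an infinite triangulation whose hull $B_r^{\bullet}$ is exactly $\Delta$, so $\{B_r^{\bullet}(\T^p_\lambda) = \Delta\} = \{\Delta \subset \T^p_\lambda\}$. Combining $\P(t \subset \T_\lambda) = c_\lambda(|\partial t|) \lambda^{|t|}$ with the characterization of $\T^p_\lambda$ via the Markov property gives
$$\P(B_r^{\bullet}(\T^p_\lambda) = \Delta) \;=\; \frac{c_\lambda(q)}{c_\lambda(p)}\, \lambda^{|\Delta| - p},$$
where $|\Delta| - p$ is the number of vertices of $\Delta$ not lying on the bottom $p$-gon.

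I then partition these non-bottom vertices using the skeleton. Writing $L_j := |\partial_j \Delta|$ (so $L_0 = p$ and $L_r = q$), the vertices on the cycles $\partial_j \Delta$ for $1 \leq j \leq r$ provide $\sum_{j=1}^{r} L_j = q + \sum_{j=1}^{r-1} L_j$ of them, while the inner vertices of each filling $M_v$ contribute $|M_v|$ more. Setting $\alpha := 8 + \tfrac{1}{h}$, the definition of $h_\lambda$ rewrites the prefactor as $c_\lambda(q)/c_\lambda(p) = \alpha^{q-p} \cdot q h_\lambda(q)/(p h_\lambda(p))$, so after collecting $\prod_{v \in f^*} \lambda^{|M_v|}$ the lemma reduces to the identity
$$\prod_{v \in f^*} \frac{\theta_\lambda(c_v)}{w_\lambda(c_v + 2)} \;=\; \alpha^{q - p}\, \lambda^{q + \sum_{j=1}^{r-1} L_j}.$$
Two elementary identities for $f$ — namely $|f^*| = q + \sum_{j=1}^{r-1} L_j$ (its vertices at positive reverse heights) and $\sum_{v \in f^*} c_v = p + \sum_{j=1}^{r-1} L_j$ (its edges) — show that the displayed identity holds for every admissible $f$ as soon as
$$\theta_\lambda(c) \;=\; \lambda\, \alpha^{1 - c}\, w_\lambda(c + 2) \qquad \text{for every } c \geq 0.$$

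The last step is to verify this closed form by matching generating functions with \eqref{generating}. Summing the proposed weights and setting $y = x/\alpha = hx/(1+8h)$, one gets $\sum_{c \geq 0} \theta_\lambda(c)\, x^c = (\lambda \alpha^3 / x^2)\bigl(W_\lambda(y) - y\, w_\lambda(1)\bigr)$. Substituting the explicit formula \eqref{G} for $W_\lambda$ (which at $y = hx/(1+8h)$ satisfies $(1+8h)y/h = x$ and $1 - 4(1+8h)y = 1-4hx$), the value \eqref{Z_1} of $w_\lambda(1)$, and the identities $\alpha \lambda = (1+8h)^{-1/2}$ and $\lambda^2 \alpha^3 = 1/h$ (both immediate from \eqref{eqn_h_lambda}), the expression collapses after cancellation to exactly $1/x - (1-x)(1-\sqrt{1-4hx})/(2hx^2) = g_\lambda(x)$. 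This generating-function manipulation is the only genuine computation in the proof; the rest is bookkeeping of the skeleton decomposition against the spatial Markov property. As a sanity check, both sides evaluate to $\theta_\lambda(0) = 1 - h$ via \eqref{Z_p}.
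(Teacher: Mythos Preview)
Your proof is correct and follows essentially the same approach as the paper, which defers to \cite{CLGmodif}: the spatial Markov property gives $\P(B_r^{\bullet}(\T^p_\lambda)=\Delta)=\frac{c_\lambda(q)}{c_\lambda(p)}\lambda^{|\Delta|-p}$, and factoring along the skeleton forces the closed form $\theta_\lambda(c)=\lambda\alpha^{1-c}w_\lambda(c+2)$, which is exactly the paper's \eqref{theta_lambda} rewritten via $\lambda\alpha=(1+8h)^{-1/2}$. Your generating-function verification then reproduces \eqref{generating} from \eqref{G}, which is what the paper asserts in its one-line proof.
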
       

\begin{proof}
The proof is exactly the same as in the case $\lambda=\lambda_c$ (Lemma 2 in \cite{CLGmodif}), up to changes of notation (the $\rho$ of \cite{CLGmodif} corresponds to our $\lambda$ and the $\alpha$ corresponds to $8+\frac{1}{h}$). The same computations yield
\begin{equation}\label{theta_lambda}
\theta_{\lambda}(i)= \frac{1}{\sqrt{1+8h}} \left( \frac{h}{1+8h} \right)^i w_{\lambda}(i+2),
\end{equation}
and the computation of $g_{\lambda}$ follows from \eqref{G}.
\end{proof}

Let $p,q,r \geq 1$ and $f \in \F'_{p,q,r}$. We sum the formula of Lemma \ref{skeleton} over all families $(M_v)_{v \in f^*}$ such that $M_v$ is a triangulation of a $(c_v+2)$-gon for every $v$. By the definition of $w_{\lambda}(i+2)$, we have $\sum_{n \geq 0} \# \mathscr{T}_{i+2,n} \frac{\lambda^n}{w_{\lambda}(i+2)}=1$ for every $i \geq 0$, so we get
\begin{equation}\label{LoiForet}
\mathbb{P} \left( \mathrm{Skel} \left(B_r^{\bullet}(\T^p_{\lambda}) \right) =f \right)=\frac{q h_{\lambda}(q)}{p h_{\lambda}(p)} \prod_{v \in f^*} \theta_{\lambda}(c_v).
\end{equation}

Note that \eqref{LoiForet} describes explicitly the distribution of $B'_r \left( \mathrm{Skel}(\T^p_{\lambda}) \right)$, so we completely know the law of $\mathrm{Skel}(\T^p_{\lambda})$. As we will see in Section 2.3, this is in theory enough to prove Theorem \ref{thm1_GW}. However, infinite leftmost geodesics are not very tractable in this characterization. Hence, we will need to find another construction of the $\mathrm{Skel} \left( \T^p_{\lambda} \right)$ and prove it is equivalent to \eqref{LoiForet}. This will be the main goal of the rest of Section 2. Before moving on to the proof of Theorem \ref{thm1_GW}, we end this subsection with a few remarks about the perimeter process of $\T_{\lambda}$.

We notice that \eqref{LoiForet} can be used to study the perimeter process of $\T_{\lambda}$ in the same way as the perimeter process of $\T_{\lambda_c}$ is studied in \cite{Kri04} and \cite{M16}. More precisely, by the same computation as in the proof of Lemma 3 in \cite{CLGmodif}, by summing \eqref{LoiForet} over all $(p,q,r)$-admissible forests, we obtain
\[\mathbb{P} \left( |\partial B_r^{\bullet} \left( \T^p_{\lambda} \right)|=q \right) = \frac{h_{\lambda}(q)}{h_{\lambda}(p)} \mathbb{P}_q \left( X_{\lambda}(r)=p \right),\]
where $X_{\lambda}$ is the Galton--Watson process with offspring distribution $\theta_{\lambda}$.
%More careful proof : recall $\F'_{p,q,r}$ is the set of pointed forests verifying the same conditions as in Definition \ref{admissible}, but where $\rho$ may be in any of the trees $t_i$. We also define $\F''_{p,q,r}$ as the set of non-pointed forests verifying only the first two conditions of Definition \ref{admissible}. We sum \eqref{LoiForet} over all admissible forests with fixed values of $p$, $q$ and $r$:
%\begin{eqnarray*}
%\mathbb{P} \left( |\partial B_r^{\bullet} \left( \T^p_{\lambda} \right)|=q \right) &=& \frac{q}{p} \frac{h_{\lambda}(q)}{h_{\lambda}(p)} \sum_{f \in \F_{p,q,r}}  \prod_{v \in f^*} \theta_{\lambda}(c_v)\\
%&=& \frac{1}{p} \frac{h_{\lambda}(q)}{h_{\lambda}(p)} \sum_{f \in \F'_{p,q,r}} \prod_{v \in f^*} \theta_{\lambda}(c_v)\\
%&=& \frac{h_{\lambda}(q)}{h_{\lambda}(p)} \sum_{f \in \F''_{p,q,r}} \prod_{v \in f^*} \theta_{\lambda}(c_v)\\
%&=& \frac{h_{\lambda}(q)}{h_{\lambda}(p)} \mathbb{P}_q \left( X_{\lambda}(r)=p \right),
%\end{eqnarray*}
%where $X_{\lambda}$ is the Galton--Watson process with offspring distribution $\theta_{\lambda}$. The second equality expresses the fact that a forest $f \in \F'_{p,q,r}$ can be obtained in a unique way by applying a cyclic permutation to the trees of a forest of $\F_{p,q,r}$. 
Since we know that $\left( |\partial B_r^{\bullet} (\T_{\lambda})|\right)_{r \geq 0}$ is a Markov chain (by the spatial Markov property) and has the same transitions as the perimeter process of $\T^1_{\lambda}$, we even get, for every $p,q \geq 1$ and $s \geq r \geq 0$,
\begin{equation}\label{branching2}
\mathbb{P} \left( |\partial B_s^{\bullet} \left( \T_{\lambda} \right)|=q \big| |\partial B_r^{\bullet} \left( \T_{\lambda} \right)|=p \right)=\frac{h_{\lambda}(q)}{h_{\lambda}(p)} \mathbb{P}_q \left( X_{\lambda}(s-r)=p \right).
\end{equation}

Let $m_{\lambda}=\sum_{i \geq 0} i \theta_{\lambda}(i)$ be the mean number of children. By \eqref{generating}, we can compute $m_{\lambda}=g'_{\lambda}(1)$ and obtain \eqref{eqn_m_lambda}.
In particular, we have $m_{\lambda} \leq 1$, with equality if and only if $\lambda=\lambda_c$. Hence, the Galton--Watson process $X_{\lambda}$ is subcritical for $\lambda<\lambda_c$. We can therefore see the perimeter process of $\T_{\lambda}$ for $\lambda<\lambda_c$ as a time-reversed subcritical branching process. Note that the perimeters and volumes of the hulls in $\T_{\lambda}$ are already quite well-known. Sharp exponential growth for a fixed $\lambda<\lambda_c$ is proved in Section 2 of \cite{CurPSHIT}, whereas the near-critical scaling limit as $\lambda \to \lambda_c$ is studied in Section 3 of \cite{B16}. Equation \eqref{branching2} together with Lemma \ref{itererg} can give explicit formulas for the generating function of the perimeters of the hull, so it should be possible to recover these results by using the same techniques as in \cite{M16}, but we do not do this in this work.

\subsection{Slicing the skeleton}

The goal of this subsection is twofold. First, we explain the link between the skeleton decomposition and the infinite leftmost geodesics of an infinite triangulation. Second, we introduce some formalism, that will later allow us to obtain a construction of $\mathrm{Skel}(\T_{\lambda})$ that is more suitable for our purpose than \eqref{LoiForet}.

\paragraph{Decomposition of a $1$-admissible forest in reverse trees.}
An infinite $1$-admissible forest $\mathbf{f}$ may contain several infinite reversed trees, and we will need to study the way these trees are placed with respect to each other. This can be encoded by a genealogical structure, which is described by the red tree on Figure \ref{tree_mathfrak_T}. If $\mathbf{t}$ is one of the infinite trees of $\mathbf{f}$, let $h^{\mathrm{rev}}_{\mathrm{min}}(\mathbf{t})$ be the reverse height of the lowest vertex of $\mathbf{t}$. We consider the set of pairs $(\mathbf{t},i)$ where $\mathbf{t}$ is an infinite tree of $\mathbf{f}$ and $i \geq h^{\mathrm{rev}}_{\mathrm{min}}(\mathbf{t})$. If $i>h^{\mathrm{rev}}_{\mathrm{min}}(\mathbf{t})$, the parent of $(\mathbf{t},i)$ is $(\mathbf{t},i-1)$. If $i=h^{\mathrm{rev}}_{\mathrm{min}}(\mathbf{t})>0$, let $\mathbf{t}'$ be the first infinite tree on the left of $\mathbf{t}$ such that $h^{\mathrm{rev}}_{\mathrm{min}}(\mathbf{t}') \leq i-1$ (note that $\mathbf{t}'$ always exists because $\mathbf{f}$ is admissible). Then the parent of $(\mathbf{t},i)$ is $(\mathbf{t}',i-1)$. Finally, if $i=0$, then $(\mathbf{t},i)$ has no parent. This genealogy is encoded in an infinite plane tree with no leaf that we denote by $\mathbf{U}(\mathbf{f})$ (see Figure \ref{tree_mathfrak_T}). More intuitively, the tree $\mathbf{U}(\mathbf{f})$ is the tree whose branches pass between the infinite trees of $\mathbf{f}$. Note that the genealogy in $\mathbf{U}(\mathbf{f})$ is "reversed" compared to the genealogy in $\mathbf{f}$: the parent of a vertex $x$ of $\mathbf{U}(\mathbf{f})$ lies below $x$, whereas in the forest $\mathbf{f}$, the parent of a vertex lies above it. Therefore, the heights in $\mathbf{U}(\mathbf{f})$ match the reverse heights in $\mathbf{f}$. We also write $B_r(\mathbf{U}(\mathbf{f}))$ for the subtree of $\mathbf{U}(\mathbf{f})$ whose vertices are the $(\mathbf{t},i)$ with $i \leq r$. Note that the tree $B_r(\mathbf{U}(\mathbf{f}))$ is not a function of $B_r(\mathbf{f})$ (it is impossible by looking at $B_r(\mathbf{f})$ to know if two vertices belong to the same infinite tree). Finally, it is easy to see that a $1$-admissible forest is completely described by the tree $\mathbf{U}(\mathbf{f})$ and the infinite trees it contains.
%Note that the decomposition we juste described still makes sense for $p$-admissible forests, the only difference is that then $\mathbf{U}(\mathbf{f})$ is a forest that may contain several infinite trees. We will only be interested in the case $p=1$ in what follows.

\begin{figure}
\begin{center}
\begin{tikzpicture}
\draw(0,4)--(0,3);
\draw(1,4)--(0.5,3);
\draw(1,4)--(1,3);
\draw(1,3)--(1,2);
\draw(1,3)--(1.5,2);
\draw(1,2)--(1,1);
\draw(1.5,2)--(1.5,1);
\draw(1,2)--(0.5,1);
\draw(1,1)--(1,0);
\draw(3,4)--(2.5,3);
\draw(3,4)--(3.5,3);
\draw(3.5,3)--(3,2);
\draw(3.5,3)--(3.5,2);
\draw(3.5,2)--(3.5,1);
\draw(5,4)--(4.5,3);
\draw(5,4)--(5.5,3);
\draw(4.5,3)--(4.5,2);
\draw[dashed](0,4)to[bend left=15](0.5,5.5);
\draw[dashed](1,4)to[bend right=15](0.5,5.5);
\draw[dashed](0.5,5.5)--(0.5,6);
\draw[dashed](3,4)to[bend left=15](3.5,5.5);
\draw[dashed](4,4)to[bend right=15](3.5,5.5);
\draw[dashed](3.5,5.5)--(3.5,6);
\draw[dashed](2,4)--(2,6);
\draw[dashed](5,4)--(5,6);

\draw(0,4)node{};
\draw(0,3)node{};
\draw(1,4)node{};
\draw(0.5,3)node{};
\draw(1,3)node{};
\draw(1,2)node{};
\draw(1.5,2)node{};
\draw(1,1)node{};
\draw(0.5,1)node{};
\draw(1.5,1)node{};
\draw(1,0)node{};
\draw(2,4)node{};
\draw(3,4)node{};
\draw(2.5,3)node{};
\draw(3.5,3)node{};
\draw(3,2)node{};
\draw(3.5,2)node{};
\draw(3.5,1)node{};
\draw(4,4)node{};
\draw(5,4)node{};
\draw(4.5,3)node{};
\draw(5.5,3)node{};
\draw(4.5,2)node{};
\draw(0.5,5.5)node{};
\draw(3.5,5.5)node{};

\draw[red, thick](2,0)--(2,1);
\draw[red, thick](2,1)--(2,2);
\draw[red, thick](2,2)--(2,3);
\draw[red, thick](2,3)--(1.5,4);
\draw[red, thick](2,3)--(2.5,4);
\draw[red, thick](2,0) to[bend right=15] (4,1);
\draw[red, thick](4,1)--(4,2);
\draw[red, thick](4,2)--(4,3);
\draw[red, thick](4,3)--(4.5,4);
\draw[red, thick](4,1)--(5.5,2);
\draw[red, thick](5.5,2)--(6,3);
\draw[red, thick](6,3)--(6,4);
\draw[red, thick, dashed](1.5,4)--(1.5,6);
\draw[red, thick, dashed](2.5,4)--(2.5,6);
\draw[red, thick, dashed](4.5,4)--(4.5,6);
\draw[red, thick, dashed](6,4)--(6,6);

\draw(2,0)node[dual]{};
\draw(2,1)node[dual]{};
\draw(2,2)node[dual]{};
\draw(2,3)node[dual]{};
\draw(1.5,4)node[dual]{};
\draw(2.5,4)node[dual]{};
\draw(4,1)node[dual]{};
\draw(4,2)node[dual]{};
\draw(4,3)node[dual]{};
\draw(4.5,4)node[dual]{};
\draw(5.5,2)node[dual]{};
\draw(6,3)node[dual]{};
\draw(6,4)node[dual]{};

\draw(0.2,5.7)node[texte]{$t_1$};
\draw(1.8,5.7)node[texte]{$t_2$};
\draw(3.2,5.7)node[texte]{$t_3$};
\draw(4.8,5.7)node[texte]{$t_4$};

\draw[red](2,-0.3)node[texte]{$(t_1,0)$};
\draw[red](2.7,1)node[texte]{$(t_1,1)$};
\draw[red](4.5,0.7)node[texte]{$(t_3,1)$};
\draw[red](6,1.7)node[texte]{$(t_4,2)$};
\draw[red](6.6,3)node[texte]{$(t_4,3)$};
\draw[red](6.6,4)node[texte]{$(t_4,4)$};
\end{tikzpicture}
\end{center}
\caption{An infinite $1$-admissible forest $\mathbf{f}$ in which $4$ trees $t_1$, $t_2$, $t_3$ and $t_4$ reach reverse height $4$. In red, the tree $\mathbf{U}(\mathbf{f})$ and the names of some of its vertices. In the proof of Proposition \ref{otherconstructionForest} for $r=4$, the $p_j^r$ are equal to $2$, $1$, $2$ and $1$ and the $h(f_j)$ are equal to $4$, $0$, $3$ and $2$.}\label{tree_mathfrak_T}
\end{figure}
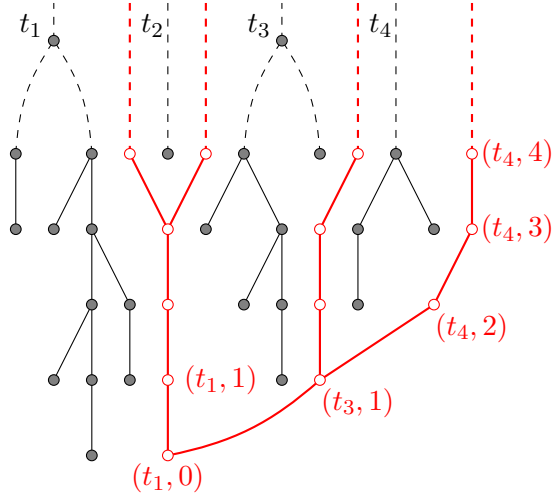

\paragraph{Leftmost infinite geodesics and decomposition of the skeleton in reverse trees.}
Let $T$ be an infinite triangulation of a $1$-gon, let $\rho$ be its root vertex (i.e. the unique point on its boundary), and let $\mathbf{f}=\mathrm{Skel}(T)$. We have seen that for every $r \geq 0$, the paths going between the trees of $B_r(\mathbf{f})$ correspond to the leftmost geodesics from $\rho$ to the vertices of $\partial B_r^{\bullet}(T)$ (cf. Figure \ref{Skeleton_decomposition}). Therefore, infinite paths started from $\rho$ in $\mathbf{U}(\mathbf{f})$ correspond to leftmost geodesic rays in $T$, so the tree of leftmost infinite geodesics in $T$ is isomorphic to $\mathbf{U}(\mathbf{f})$.

\paragraph{The skeleton decomposition of infinite strips.}
We will also need to describe the skeleton decomposition of \emph{strips}, which are infinite triangulations with two infinite geodesic boundaries. They correspond to the $S^0$ and $S^1$ appearing in Theorem \ref{thm1_GW}.

\begin{defn}\label{defn_strip}
An \emph{infinite strip} is a one-ended planar triangulation bounded by two infinite geodesics $\gamma_{\ell}$ (on its left) and $\gamma_r$ (on its right), and equipped with a root vertex $\rho$, such that:
\begin{itemize}
\item[(i)]
$\rho$ is the only common point of $\gamma_{\ell}$ and $\gamma_r$,
\item[(ii)]
for every $i,j \geq 0$, the path $\gamma_r$ is the only geodesic from $\gamma_r(i)$ to $\gamma_r(j)$,
\item[(iii)]
$\gamma_r$ and $\gamma_{\ell}$ are the only leftmost geodesic rays in $S$.
\end{itemize}
\end{defn}

Exactly as for infinite triangulations of the $p$-gon, if $S$ is an infinite strip, we define its ball $B_r(S)$ of radius $r$ as the union of all its faces containing a vertex at distance at most $r-1$ from $\rho$. We also define its hull $B_r^{\bullet}(S)$ of radius $r$ as the union of $B_r(S)$ and all the finite connected components of its complement.
To define the skeleton of an infinite strip $S$, we note that there is a simple transformation that associates to $S$ an infinite triangulation of the $p$-gon, where $p$ is the number of edges on $\partial B_1^{\bullet}(S)$. More precisely, we write $\widetilde{S}$ for the map obtained by rooting $S \backslash B_1^{\bullet}(S)$ at the leftmost edge of $\partial B_1^{\bullet}(S)$ and gluing $\gamma_{\ell}$ and $\gamma_r$ together. We define the skeleton of $S$ as the skeleton of $\widetilde{S}$ (see Figure \ref{Skeleton_decomposition_strip}).
Like for triangulations of the plane, the skeleton decomposition is a bijection between infinite strips and pairs consisting:
\begin{itemize}
\item
on the one hand of an infinite reverse tree $\mathbf{t}$ that is rooted at its leftmost vertex of reverse height $0$,
\item
on the other hand of a family of maps $(M_v)_{v \in \mathbf{t}}$ such that $M_v$ is a triangulation of a $(c_v+2)$-gon for every $v$.
\end{itemize}
The fact that $\mathbf{t}$ must be connected follows from the uniqueness of the leftmost geodesic rays $\gamma_{\ell}$ and $\gamma_r$ in a strip (recall that the trees of the skeleton are separated by infinite leftmost geodesics).
Note that this time, the triangulations filling the holes are indexed by $\mathbf{t}$ and not $\mathbf{t}^*$ since the $M_v$ for $v$ at reverse height $0$ are used to encode $B_1^{\bullet}(S)$ (see Figure \ref{Skeleton_decomposition_strip}).

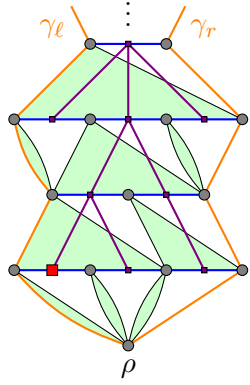
\begin{figure}
\begin{center}
\begin{tikzpicture}
\fill[green!20] (0,0) to[bend left=15] (-1.5,1)--(0,0);
\fill[green!20] (0,0) to[bend left=15] (-0.5,1) to[bend left=15] (0,0);
\fill[green!20] (0,0) to[bend left=15] (0.5,1) to[bend left=15] (0,0);
\fill[green!20] (-1.5,1)--(0.5,1)--(-1,2);
\fill[green!20] (0.5,1)--(1.5,1)--(0,2);
\fill[green!20] (-1,2) to[bend left=15] (-1.5,3) to[bend left=15] (-1,2);
\fill[green!20] (-1,2)--(1,2)--(-0.5,3);
\fill[green!20] (1,2) to[bend left=15] (0.5,3) to[bend left=15] (1,2);
\fill[green!20] (-1.5,3)--(1.5,3)--(-0.5,4);

\draw[thick, blue](-1.5,1)--(1.5,1);
\draw[thick, blue](-1,2)--(1,2);
\draw[thick, blue](-1.5,3)--(1.5,3);
\draw[thick, blue](-0.5,4)--(0.5,4);

\draw[orange, thick](0,0) to[bend left=15] (-1.5,1) -- (-1,2) to[bend left=15] (-1.5,3) -- (-0.5,4)--(-0.75,4.5);
\draw[orange, thick](0,0) -- (1.5,1)--(1,2)--(1.5,3)--(0.5,4)--(0.75,4.5);
\draw(0,0)--(-1.5,1);
\draw(0,0) to[bend left=15] (-0.5,1);
\draw(0,0) to[bend right=15] (-0.5,1);
\draw(0,0) to[bend left=15](0.5,1);
\draw(0,0) to[bend right=15](0.5,1);
\draw(0.5,1)--(-1,2);
\draw(0.5,1)--(0,2);
\draw(1.5,1)--(0,2);
\draw(-1,2) to[bend right=15] (-1.5,3);
\draw(-1,2)--(-0.5,3);
\draw(1,2)--(-0.5,3);
\draw(1,2) to[bend left=15] (0.5,3);
\draw(1,2) to[bend right=15] (0.5,3);
\draw(1.5,3)--(-0.5,4);

\draw(0,0) node{};
\draw(-1.5,1) node{};
\draw(-0.5,1) node{};
\draw(0.5,1) node{};
\draw(1.5,1) node{};
\draw(-1,2) node{};
\draw(0,2) node{};
\draw(1,2) node{};
\draw(-1.5,3) node{};
\draw(-0.5,3) node{};
\draw(0.5,3) node{};
\draw(1.5,3) node{};
\draw(-0.5,4) node{};
\draw(0.5,4) node{};

\draw[violet,thick] (-1,1)--(-0.5,2);
\draw[violet,thick] (0,1)--(-0.5,2);
\draw[violet,thick] (1,1)--(0.5,2);
\draw[violet,thick] (-0.5,2)--(0,3);
\draw[violet,thick] (0.5,2)--(0,3);
\draw[violet,thick] (-1,3)--(0,4);
\draw[violet,thick] (0,3)--(0,4);
\draw[violet,thick] (1,3)--(0,4);

\draw(-1,1) node[rootedge]{};
\draw(0,1) node[edge']{};
\draw(1,1) node[edge']{};
\draw(-0.5,2) node[edge']{};
\draw(0.5,2) node[edge']{};
\draw(-1,3) node[edge']{};
\draw(0,3) node[edge']{};
\draw(1,3) node[edge']{};
\draw(0,4) node[edge']{};

\draw(0,-0.3) node[texte]{$\rho$};
\draw[orange](1,4.2) node[texte]{$\gamma_r$};
\draw[orange](-1,4.2) node[texte]{$\gamma_{\ell}$};
\draw(0,4.5) node[texte]{$\vdots$};

\end{tikzpicture}
\end{center}
\caption{An infinite strip $S$ and its skeleton.} \label{Skeleton_decomposition_strip}
\end{figure}

Finally, let $T$ be an infinite triangulation of a $1$-gon, and let $\mathbf{f}$ be its skeleton. As we have seen above, the tree $\mathbf{U}(\mathbf{f})$ can be seen as the tree of leftmost infinite geodesics of $T$. Moreover, $\mathbf{U}(\mathbf{f})$ cuts $T$ into strips, whose skeletons are the infinite reverse trees of $\mathbf{f}$. This reduces the proof of Theorem \ref{thm1_GW} to the study of $\mathrm{Skel}(\mathbb{T}_{\lambda}^1)$.

\subsection{The distribution of \texorpdfstring{$S_{\lambda}^0$}{TEXT} and \texorpdfstring{$S_{\lambda}^1$}{TEXT}}

The goal of this subsection is to describe (without to build them explicitly) the strips $S_{\lambda}^0$ and $S_{\lambda}^1$ in a way that will allow us to prove Theorem \ref{thm1_GW}. This description will involve the quasi-stationary distribution of the branching process $X_{\lambda}$. Hence, we start with two explicit computations about $X_{\lambda}$. First, we give an explicit formula for the iterates of the generating function $g_{\lambda}$. We define $g_{\lambda}^{\circ r}$ by $g_{\lambda}^{\circ 0}=\mathrm{Id}$ and $g_{\lambda}^{\circ (r+1)}=g_{\lambda} \circ g_{\lambda}^{\circ r}$ for every $r \geq 0$. Note that $g_{\lambda_c}^{\circ r}$ is explicitly computed in \cite{CLGmodif}: we have
\begin{equation}\label{itererg_critique}
g_{\lambda_c}^{\circ r}(x)=1- \left(r+\frac{1}{\sqrt{1-x}} \right)^{-2}
\end{equation}
for every $x \in [0,1]$. The iterates of $g_{\lambda}$ in the subcritical case are also computed in \cite{M16}, with different notations. Note that when $\lambda \to \lambda_c$ in the formula below, we recover \eqref{itererg_critique}.

\begin{lem} \label{itererg}
Let $0  < \lambda< \lambda_c$. For every $r \geq 0$, we have
\[g_{\lambda}^{\circ r}(x)= 1-\frac{1-4h}{4h \sinh^2 \left( \argsh \sqrt{\frac{1-4h}{4h(1-x)}} +r b_{\lambda} \right)},\]
where $b_{\lambda}= \argch \frac{1}{\sqrt{4h}} =-\frac{1}{2} \ln m_{\lambda}$, and $h$ is as in \eqref{eqn_h_lambda}.
\end{lem}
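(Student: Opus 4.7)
The plan is to show that the function
\[ u(x) \;=\; \argsh\sqrt{\frac{1-4h}{4h(1-x)}} \]
conjugates $g_\lambda$ to the translation $u \mapsto u + b_\lambda$. Once this is established, an immediate induction gives $u(g_\lambda^{\circ r}(x)) = u(x)+r b_\lambda$ for every $r \geq 0$, and inverting the definition of $u$ yields
\[ 1 - g_\lambda^{\circ r}(x) \;=\; \frac{1-4h}{4h\sinh^2\!\bigl(u(x)+r b_\lambda\bigr)}, \]
which is the claim.

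So the real work is to check the single-step identity $u(g_\lambda(x)) = u(x) + b_\lambda$, or equivalently
\[ 1-g_\lambda(x) \;=\; \frac{1-4h}{4h\sinh^2\!\bigl(u(x)+b_\lambda\bigr)}. \]
Using the addition formula $\sinh(a+b)=\sinh a\cosh b+\cosh a\sinh b$ together with
\[ \cosh b_\lambda = \frac{1}{\sqrt{4h}}, \qquad \sinh b_\lambda = \sqrt{\frac{1-4h}{4h}} \]
(from $b_\lambda = \argch(1/\sqrt{4h})$), and
\[ \sinh u(x) = \sqrt{\frac{1-4h}{4h(1-x)}}, \qquad \cosh u(x) = \sqrt{\frac{1-4hx}{4h(1-x)}}, \]
a short computation gives $\sinh(u(x)+b_\lambda) = \sqrt{1-4h}\bigl(1+\sqrt{1-4hx}\bigr)/\bigl(4h\sqrt{1-x}\bigr)$, so the target identity reduces to
\[ 1 - g_\lambda(x) \;=\; \frac{4h(1-x)}{\bigl(1+\sqrt{1-4hx}\bigr)^2}. \]

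This last equality is purely algebraic and follows from the explicit expression \eqref{generating} for $g_\lambda$ by rationalising with $\sqrt{1-4hx}$: setting $s=\sqrt{1-4hx}$ one has $1-s = 4hx/(1+s)$, from which a direct manipulation of \eqref{generating} gives $1-g_\lambda(x) = (1-x)(1-s)/\bigl(x(1+s)\bigr) = 4h(1-x)/(1+s)^2$. This is the only nontrivial step; everything else is bookkeeping. Finally, to justify the alternative expression $b_\lambda = -\tfrac12\ln m_\lambda$, one computes $e^{-b_\lambda} = (1-\sqrt{1-4h})/\sqrt{4h}$ from $\cosh b_\lambda = 1/\sqrt{4h}$ and expands $(1-\sqrt{1-4h})^2 = 2(1-2h-\sqrt{1-4h})$, which together with \eqref{eqn_m_lambda} gives $e^{-2b_\lambda} = m_\lambda$. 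The main obstacle is simply carrying out the rationalisation cleanly; the structural content of the lemma is the (not completely obvious) observation that such a conjugation to a translation exists, which is natural in hindsight because $g_\lambda$ is a Möbius-type function of $\sqrt{1-4hx}$.
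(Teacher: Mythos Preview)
Your proof is correct and self-contained. The paper's own proof is not a direct computation: it simply identifies $g_\lambda$ with the function $\varphi_t$ of \cite{M16} for $t=\frac{12h}{1+8h}$ and invokes Lemma~3 of that reference. Your approach---exhibiting an explicit conjugation $u$ that turns $g_\lambda$ into a translation---is essentially the alternative argument the paper sketches in the Remark immediately following the lemma (there phrased in terms of the invariant measure $H_\lambda$, which plays the role of your $u$ up to normalisation). The advantage of your route is that it is fully self-contained and makes the structure transparent without relying on \cite{M16}; the paper's route avoids repeating a computation already in the literature.
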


\begin{proof}
This computation is already done in Section 3.1 of \cite{M16}, with different notation. In \cite{M16}, the function $\varphi_t(u)$ for $0 \leq t \leq 1$ is defined by
\[\varphi_t(u)= \frac{\rho s}{\alpha t} \sum_{i \geq 0} (\alpha t)^i w_{\rho s}(i+2) u^i,\]
where $\alpha=\frac{1}{12}$, $\rho=\frac{1}{12 \sqrt{3}}=\lambda_c$ and $s=t\sqrt{3-2t}$. For $t=\frac{12h}{1+8h}$, we have $s=\frac{\lambda}{\lambda_c}$, so by \eqref{theta_lambda}, we have $\frac{\rho s}{\alpha t} (\alpha t)^i w_{\rho s}(i+2)=\theta_{\lambda}(i)$ for every $i \geq 0$, and our function $g_{\lambda}$ corresponds to the $\varphi_t$ of \cite{M16} for $t=\frac{12h}{1+8h}$. Our formula follows then immediately from Lemma 3 of \cite{M16}. Finally, using \eqref{eqn_m_lambda}, it is easy to check that $e^{-2b_{\lambda}}=m_{\lambda}$.
\end{proof}

\begin{rem}
It is also possible to prove the last lemma directly by using \eqref{branching2}. Indeed, the probability must add up to $1$ when we sum them over $q$, which shows that $\left( h_{\lambda}(p) \right)$ is an invariant measure for $X_{\lambda}$. If we write $H_{\lambda}(x)=\sum_{p  \geq 1} h_{\lambda}(p) x^p$, this easily gives 
\[H_{\lambda}(g_{\lambda}(x))=H_{\lambda}(x)+H_{\lambda} (g_{\lambda}(0)),\]
so $g_{\lambda}^{\circ r}(x)=H_{\lambda}^{-1} \left( H_{\lambda}(x)+rH_{\lambda}(g_{\lambda}(0))\right)$. Since $H_{\lambda}$ can be explicitly computed, this also gives the result.
\end{rem}

Our second computation deals with the quasi-stationary measure of $X_{\lambda}$. We first recall a few facts  about quasi-stationary measures of Galton--Watson processes (see for example Chapter 7 of \cite{AN72}). For every $p \geq 1$, the ratio $\frac{\P_1 \left( X_{\lambda}(n)=p \right)}{\P_1 \left( X_{\lambda}(n)=1 \right)}$, where $\P_1$ is the distribution of $X_{\lambda}$ started from $1$, is nondecreasing in $n$ and converges to $\pi_{\lambda}(p)<+\infty$. Moreover, let $\Pi_{\lambda}$ be the generating function of $\left( \pi_{\lambda}(p) \right)_{p \geq 1}$. In our case, it is explicit:
%Note that $\Pi$ has a convergence radius at least $1$ and is the unique solution (up to a multiplicative constant), among power series with nonnegative coefficients vanishing at $0$, of the equation
%\begin{equation}
%\Pi \left( g(x) \right)= m \Pi \left( x \right) + \Pi \left( g(0) \right),
%\end{equation}
%where we recall that $m=\sum_{i \geq 0} i \theta(i)$.
\begin{lem} \label{Piexact}
For $0 < \lambda<\lambda_c$, we have
\[ \Pi_{\lambda}(x)=\frac{1}{\sqrt{1-4h}} \bigg(1- (1-x) \left( \frac{\sqrt{1-4h}+1}{\sqrt{1-4h}+\sqrt{1-4hx}} \right)^2 \bigg). \]
We also have
\[\Pi_{\lambda_c}(x)=2 \left( \frac{1}{\sqrt{1-x}}-1 \right).\]
In particular, we have $\Pi_{\lambda}(\theta_{\lambda}(0))=\Pi_{\lambda}(1-h)=\frac{1-\sqrt{1-4h}}{2h} $.
\end{lem}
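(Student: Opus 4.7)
The plan is to compute $\Pi_\lambda$ by exploiting the identity
\[ \Pi_\lambda(x) = \lim_{r\to\infty} \frac{g_\lambda^{\circ r}(x) - g_\lambda^{\circ r}(0)}{(g_\lambda^{\circ r})'(0)}, \]
which follows from the two elementary observations $\sum_{p\ge 1}\P_1(X_\lambda(r)=p)\,x^p = g_\lambda^{\circ r}(x) - g_\lambda^{\circ r}(0)$ and $\P_1(X_\lambda(r)=1) = (g_\lambda^{\circ r})'(0)$, combined with monotone convergence (the ratio $\P_1(X_\lambda(r)=p)/\P_1(X_\lambda(r)=1)$ is nondecreasing in $r$, as recalled just before the statement). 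The explicit formula of Lemma~\ref{itererg} then reduces the problem to a purely algebraic computation.

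Write $A(x) = \argsh\sqrt{(1-4h)/(4h(1-x))}$. A direct check shows $A(0) = b_\lambda$, since $\sinh b_\lambda = \sqrt{\cosh^2 b_\lambda - 1} = \sqrt{1/(4h)-1}$. Hence Lemma~\ref{itererg} gives the compact forms $1-g_\lambda^{\circ r}(0) = (1-4h)/(4h\sinh^2((r+1)b_\lambda))$ and $1-g_\lambda^{\circ r}(x) = (1-4h)/(4h\sinh^2(A(x)+rb_\lambda))$, and differentiating the second in $x$ and evaluating at $x=0$ yields $(g_\lambda^{\circ r})'(0)$ in closed form. As $r\to\infty$, $\sinh(u+rb_\lambda) \sim \tfrac{1}{2}e^u e^{rb_\lambda}$, so the leading factors $e^{-2rb_\lambda} = m_\lambda^r$ cancel between numerator and denominator, and the whole limit reduces to computing $e^{-2A(x)}$.

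The decisive step is the identity $e^{-A(x)} = \sqrt{1+y^2}-y$ with $y = \sqrt{(1-4h)/(4h(1-x))}$: using $1+y^2 = (1-4hx)/(4h(1-x))$ and rationalizing the difference of square roots gives
\[ e^{-A(x)} = \frac{\sqrt{4h(1-x)}}{\sqrt{1-4hx}+\sqrt{1-4h}}. \]
Substituting this in the limit and collecting terms produces exactly the stated formula for $\Pi_\lambda(x)$. The critical case $\lambda=\lambda_c$ is handled by the same scheme starting from \eqref{itererg_critique}, or equivalently by passing to the limit $h\to 1/4$ in the subcritical formula.

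For the final evaluation, the Taylor expansion of \eqref{generating} at $0$ yields $g_\lambda(0) = 1-h$, i.e.\ $\theta_\lambda(0)=1-h$. Plugging $x=1-h$ into the formula for $\Pi_\lambda$, the simplification $1-4h(1-h) = (1-2h)^2$ makes the inner square root disappear, and the identity $(1+\sqrt{1-4h})^2 = 2(1-2h+\sqrt{1-4h})$ collapses the quotient to $\Pi_\lambda(1-h) = 2/(1+\sqrt{1-4h})$, which equals $(1-\sqrt{1-4h})/(2h)$ after one last rationalization. The only real obstacle in the whole argument is algebraic bookkeeping: extracting $e^{-A(x)}$ in a usable closed form and making the asymptotic exponentials match cleanly in the limit; there is no analytic difficulty once Lemma~\ref{itererg} is at hand.
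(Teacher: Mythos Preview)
Your proposal is correct and follows essentially the same approach as the paper: both start from the identity $\Pi_\lambda(x)=\lim_{r\to\infty}\frac{g_\lambda^{\circ r}(x)-g_\lambda^{\circ r}(0)}{(g_\lambda^{\circ r})'(0)}$ and then appeal to the explicit iterate formula of Lemma~\ref{itererg} (and \eqref{itererg_critique} for the critical case). The paper simply declares the ensuing computation ``straightforward'', whereas you spell out the key algebraic step $e^{-A(x)}=\sqrt{4h(1-x)}/(\sqrt{1-4hx}+\sqrt{1-4h})$ and the evaluation at $x=1-h$; all of these details are correct.
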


\begin{proof}
By the definition of $\pi_{\lambda}(p)$, we have
\[ \Pi_{\lambda} (x)=\lim_{n \to +\infty} \frac{g_{\lambda}^{\circ n}(x)-g_{\lambda}^{\circ n}(0)}{(g_{\lambda}^{\circ n})'(0)}.\]
Hence, by using Lemma \ref{itererg}, the computation of $\Pi_{\lambda}(x)$ is straightforward. Note that the case $\lambda=\lambda_c$ already appears in \cite{CLGmodif} (in the proof of Lemma 3).
\end{proof}

We can now describe the distribution of $S_{\lambda}^0$ and $S_{\lambda}^1$. We will first admit the existence of reverse trees with a certain distribution (described by the next lemma), and later (Section 2.6) build these trees by a spine decomposition approach.

\begin{lem}\label{distrib_ball_tau}
There are two infinite reverse trees $\boldsymbol{\tau}^0_{\lambda}$ and $\boldsymbol{\tau}^1_{\lambda}$ whose distributions are characterized as follows. For every $r \geq 0$ and every forest $(t_1, \dots, t_p)$ of height exactly $r$, we have
\begin{equation}\label{tauforest0}
\mathbb{P} \left( B_r ( \boldsymbol{\tau}^0_{\lambda} )=(t_1, \dots, t_p) \right)=\frac{\pi_{\lambda}(p) m_{\lambda}^{-r}}{\Pi_{\lambda}(\theta_{\lambda}(0))} \prod_{i=1}^p \prod_{v \in t_i} \theta_{\lambda}(c_v)
\end{equation}
and, if $f$ has only one vertex at height $r$,
\begin{equation}\label{tauforest1}
\P \left( B_r ( \boldsymbol{\tau}_{\lambda}^1 ) =(t_1, \dots, t_p) \right)=\frac{\pi_{\lambda}(p) m_{\lambda}^{-r}}{\theta_{\lambda}(0)} \prod_{i=1}^p \prod_{v \in t_i} \theta_{\lambda} (c_v),
\end{equation}
where $c_v$ is the number of children of $v$ for every vertex $v$.
\end{lem}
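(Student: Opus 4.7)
The plan is to apply a Kolmogorov-type extension argument. Concretely, an infinite reverse tree $\boldsymbol{\tau}$ is encoded by the projective system of its truncations $(B_r(\boldsymbol{\tau}))_{r \geq 0}$; I therefore take \eqref{tauforest0}--\eqref{tauforest1} as prescribed marginal laws and check that they form a consistent family of probability measures, which yields existence and uniqueness of $\boldsymbol{\tau}^0_\lambda$ and $\boldsymbol{\tau}^1_\lambda$. Two things must be verified: (a) for each $r$, the prescribed weights form a probability distribution; (b) for every forest $f$ to which the formula applies, the weight of $f$ at level $r$ equals the sum of the weights of its extensions at level $r+1$. Here an extension of $f=(t_1,\dots,t_p)$ is obtained by adding $q \geq 1$ new roots at reverse height $r+1$ together with a weak composition $c_1 + \cdots + c_q = p$ distributing the old roots in order among the children of the new roots; note that in the $\boldsymbol{\tau}^1$ case the ``one vertex at height~$r$'' condition is preserved because the bottom layer is untouched by the extension.

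The key analytic input is the \emph{quasi-stationary identity}
\[ \sum_{q \geq 1} \pi_\lambda(q)\, \mathbb{P}_q\bigl(X_\lambda(1)=p\bigr) = m_\lambda\, \pi_\lambda(p), \qquad p \geq 1, \]
equivalently $\Pi_\lambda(g_\lambda(x)) = m_\lambda \Pi_\lambda(x) + \Pi_\lambda(\theta_\lambda(0))$. To derive it one iterates $\mathbb{P}_1(X_{n+1}=p) = \sum_k \mathbb{P}_1(X_n = k)\,\mathbb{P}_k(X_1 = p)$, divides through by $\mathbb{P}_1(X_{n+1}=1)$, passes to the limit using the definition of $\pi_\lambda$, and identifies the multiplicative constant by differentiating the resulting functional equation at $x=1$ (using $g_\lambda(1)=1$ and $g_\lambda'(1)=m_\lambda$). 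Alternatively, the functional equation follows by direct substitution of the closed forms for $g_\lambda$ and $\Pi_\lambda$ given in Lemmas \ref{skeleton} and \ref{Piexact}.

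Given the identity, consistency is a short calculation: summing \eqref{tauforest0} over extensions factors as
\[ \frac{m_\lambda^{-(r+1)}}{\Pi_\lambda(\theta_\lambda(0))}\prod_{i=1}^p \prod_{v \in t_i} \theta_\lambda(c_v) \cdot \sum_{q \geq 1} \pi_\lambda(q) \sum_{\substack{c_1+\cdots+c_q=p \\ c_j \geq 0}} \prod_{j=1}^q \theta_\lambda(c_j), \]
whose inner sum equals $\mathbb{P}_q(X_\lambda(1)=p)$; the quasi-stationary identity then collapses the double sum to $m_\lambda\, \pi_\lambda(p)$, which cancels one power of $m_\lambda^{-1}$ and recovers \eqref{tauforest0} exactly. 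The argument for \eqref{tauforest1} is identical with $\Pi_\lambda(\theta_\lambda(0))$ replaced by $\theta_\lambda(0)$. Normalization at $r=0$ is then immediate: for $\boldsymbol{\tau}^0_\lambda$ the sum $\sum_{p \geq 1}\pi_\lambda(p)\theta_\lambda(0)^p = \Pi_\lambda(\theta_\lambda(0))$ cancels the prefactor, and for $\boldsymbol{\tau}^1_\lambda$ the only admissible base-case forest has $p=1$ and contributes $\pi_\lambda(1)\theta_\lambda(0)/\theta_\lambda(0) = 1$ with the normalization $\pi_\lambda(1) = 1$. The main obstacle is really only isolating the correct quasi-stationary relation and matching it with the normalizing constants $\Pi_\lambda(\theta_\lambda(0))$ and $\theta_\lambda(0)$; once done, everything else is routine bookkeeping. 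This gives existence abstractly via projective limits, and a more explicit spine-type construction of $\boldsymbol{\tau}^0_\lambda$ and $\boldsymbol{\tau}^1_\lambda$ is deferred to Section~2.6.
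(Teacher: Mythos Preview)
Your approach is correct in its essentials but takes a genuinely different route from the paper. The paper does not use a Kolmogorov extension argument at all: it \emph{constructs} $\boldsymbol{\tau}^0$ explicitly via a spine decomposition (Section~2.6), grafting independent conditioned Galton--Watson trees along an infinite spine according to the law \eqref{distribLR}. It then shows (Lemma~\ref{GW_conditioned_die_at_r}) that the tree of descendants of the $r$-th spine vertex is a Galton--Watson tree conditioned to have height exactly $r$, identifies the process $Y(r)$ with Esty's reverse branching process (Lemma~\ref{distribY}), and only then reads off formulas \eqref{tauforest0}--\eqref{tauforest1}. Your route inverts this: you take the formulas as given, check they form a consistent projective system via the quasi-stationary relation $\Pi_\lambda \circ g_\lambda = m_\lambda \Pi_\lambda + \Pi_\lambda(\theta_\lambda(0))$, and invoke Kolmogorov. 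This is shorter and conceptually clean for the bare distributional statement, and your consistency computation and normalization at $r=0$ are both right.

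There is one point you gloss over. Kolmogorov's theorem produces a random element of the projective limit of finite forests, i.e.\ an infinite reverse \emph{forest}; the lemma claims an infinite reverse \emph{tree}. Nothing in your consistency check forces the limiting object to be connected (equivalently, that all vertices at reverse height $0$ eventually share a common ancestor). In the paper's construction this is automatic because everything is attached to the spine, and Lemma~\ref{taufinite} guarantees the spine eventually dominates every reverse height. In your framework you would still need an argument that the number of level-$r$ ancestors of the level-$0$ vertices tends to $1$ almost surely. You partly acknowledge this by deferring to Section~2.6, but that means your abstract argument alone does not quite prove the lemma as stated. The paper's spine approach also buys something your approach does not: the explicit decomposition in terms of $(L_r,R_r)$ is reused in Section~4 (proof of Lemma~\ref{No_new_geodesics}) to get quantitative bounds on the heights of the grafted trees, which a pure extension argument does not provide.
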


Moreover, let $\boldsymbol{\tau}^{1,*}_{\lambda}$ be the tree $\boldsymbol{\tau}^1_{\lambda}$ in which we have cut the only vertex at reverse height $0$. The reverse heights in $\boldsymbol{\tau}^{1,*}$ are shifted by $1$, so that the minimal reverse height in $\boldsymbol{\tau}^{1,*}$ is $0$.
This allows us to define the two strips that appear in Theorem \ref{thm1_GW}.
\begin{defn}
We denote by $S^0_{\lambda}$ (resp. $S^1_{\lambda}$) the random infinite strip whose skeleton is $\boldsymbol{\tau}^0_{\lambda}$ (resp. $\boldsymbol{\tau}^{1,*}_{\lambda}$) and where conditionally on the skeleton, all the holes are filled with independent Boltzmann triangulations with parameter $\lambda$.
\end{defn}
The reason why we need to replace $\boldsymbol{\tau}^{1}_{\lambda}$ by $\boldsymbol{\tau}^{1,*}_{\lambda}$ in this last definition is linked to the root transformation between $\T_{\lambda}$ and $\T_{\lambda}^1$, and will be explained in details in the end of Subsection 2.5 (see Figure \ref{root_transformation_strip}).

\subsection{Proof of Theorem \ref{thm1_GW}}

The goal of this subsection is to prove Theorem \ref{thm1_GW}. For this, we build a random infinite $1$-admissible forest $\mathbf{F}_{\lambda}$ directly in terms of its decomposition in reverse infinite trees, and we show that it has the same distribution as $\mathrm{Skel}(\T_{\lambda}^1)$.

We recall that $\mu_{\lambda}(0)=0$ and $\mu_{\lambda}(k)=m_{\lambda}(1-m_{\lambda})^{k-1}$ for $k \geq 1$. We have seen that an infinite $1$-admissible forest $\mathbf{f}$ is completely described by the tree $\mathbf{U}(\mathbf{f})$ and the infinite trees that $\mathbf{f}$ contains. Therefore, there is a unique (in distribution) random $1$-admissible forest $\mathbf{F}_{\lambda}$ such that $\mathbf{U}(\mathbf{F}_{\lambda})$ is a Galton--Watson tree with offspring distribution $\mu_{\lambda}$ and, conditionally on $\mathbf{U}(\mathbf{F}_{\lambda})$:
\begin{itemize}
\item[(i)]
the trees of $\mathbf{F}_{\lambda}$ are independent,
\item[(ii)]
the unique tree that reaches reverse height $0$ has the same distribution as $\boldsymbol{\tau}^1_{\lambda}$,
\item[(iii)]
all the other trees have the same distribution as $\boldsymbol{\tau}^0_{\lambda}$ described above.
\end{itemize}
A more rigorous (but heavier) way to define $\mathbf{F}$ would be to build explicitly $B_r(\mathbf{F})$ by concatenating independent forests of the form $B_{j}(\boldsymbol{\tau}^0)$ and $B_{j}(\boldsymbol{\tau}^1)$.

In order to prove that $\mathbf{F}_{\lambda}$ has indeed the same distribution as $\mathrm{Skel}(\T_{\lambda}^1)$, we need to introduce one last notation. Let $\mathbf{f}$ be a $1$-admissible forest, and let $r \geq 1$. Let $\ell$ be the number of infinite reverse trees of $\mathbf{f}$ that intersect $B_r(\mathbf{f})$, and let $\mathbf{t}_1^r, \dots, \mathbf{t}_{\ell}^r$ be these trees, from left to right. For every $1 \leq j \leq \ell$, we denote by $p_j^r(\mathbf{f})$ the number of vertices of $\mathbf{t}_j^r$ whose reverse height in $\mathbf{f}$ is exactly $r$ (see Figure \ref{tree_mathfrak_T} for an example). Note that for each $1 \leq j \leq \ell$, the trees of $B_r(\mathbf{f})$ that belong to $\mathbf{t}_j^r$ are $p_j^r(\mathbf{f})$ consecutive trees of $B_r(\mathbf{f})$. Therefore, if we already know $B_r(\mathbf{f})$, knowing the values $p_j^r(\mathbf{f})$ is equivalent to knowing which of the trees of $B_r(\mathbf{f})$ lie in the same infinite reverse tree of $\mathbf{f}$.
We write $\widetilde{B}_r(\mathbf{f})$ for the pair $\left( B_r(\mathbf{f}), (p_1^r(\mathbf{f}), \dots, p_{\ell}^r(\mathbf{f})) \right)$. The reason why we introduce this object is that its distribution for $\mathbf{f}=\mathbf{F}_{\lambda}$ will be easier to compute.

The key result is the next proposition. As explained in Section 2.1, a $1$-admissible forest $\mathbf{f}$ is characterized by its reordered balls $B'_r(\mathbf{f})$, so the distribution of $\mathrm{Skel}(\T_{\lambda}^1)$ is characterized by the distribution of its reordered balls. Therefore, the next proposition will imply that $\mathbf{F}_{\lambda}$ and $\mathrm{Skel}(\T_{\lambda}^1)$ have the same distribution, which implies Theorem \ref{thm1_GW}.

\begin{prop} \label{otherconstructionForest}
For every $r \geq 0$, the forests $B'_r(\mathbf{F}_{\lambda})$ and $\mathrm{Skel} \left(B_r^{\bullet}(\T_{\lambda}^1) \right)$ have the same distribution.
\end{prop}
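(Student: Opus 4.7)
\begin{idproof}
The plan is to compute $\mathbb{P}(B'_r(\mathbf{F}_\lambda) = f)$ for an arbitrary $(1,q,r)$-admissible forest $f$ and match it with the formula from \eqref{LoiForet}.

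I decompose the event $\{B'_r(\mathbf{F}_\lambda)=f\}$ according to the composition $(p_1,\dots,p_\ell)$ of $q$ that records how the $q$ top trees of $f$ are grouped into consecutive packets, one packet per infinite reverse tree of $\mathbf{F}_\lambda$ reaching level $r$. Two top trees of $f$ belong to the same infinite reverse tree of $\mathbf{F}_\lambda$ if and only if their upward chains merge above level $r$, so this partition carries genuine additional information not visible in $f$. Given the composition, the split $f = f^{(1)} \cdots f^{(\ell)}$ is determined, and so are the birth heights $h_j = r - (\text{maximal depth of } f^{(j)})$ for $j \geq 2$, hence also the shape $U$ of $B_r(\mathbf{U}(\mathbf{F}_\lambda))$.

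By independence of $\mathbf{U}(\mathbf{F}_\lambda)$ and the grafted infinite reverse trees conditional on $\mathbf{U}$, the contribution of such a composition to $\mathbb{P}(B'_r(\mathbf{F}_\lambda) = f)$ is
\[
\mathbb{P}(B_r(\mathbf{U})=U)\cdot\mathbb{P}(B_r(\boldsymbol{\tau}^1_\lambda)=f^{(1)})\cdot\prod_{j=2}^{\ell}\mathbb{P}(B_{r-h_j}(\boldsymbol{\tau}^0_\lambda)=f^{(j)}).
\]
Using $\mu_\lambda(k)=m_\lambda(1-m_\lambda)^{k-1}$, the first factor equals $m_\lambda^V(1-m_\lambda)^{\ell-1}$ with $V=r+\sum_{j\geq 2}(r-h_j)$ the number of internal vertices of $U$. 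Plugging in \eqref{tauforest0} and \eqref{tauforest1}, the powers of $m_\lambda$ cancel perfectly, and the $\theta_\lambda(0)$ appearing in the denominator of \eqref{tauforest1} cancels against the $\theta_\lambda(0)$ arising from the unique bottom vertex of $f^{(1)}$. What remains is
\[
\frac{(1-m_\lambda)^{\ell-1}}{\Pi_\lambda(\theta_\lambda(0))^{\ell-1}}\prod_{j=1}^{\ell}\pi_\lambda(p_j)\,\prod_{v\in f^*}\theta_\lambda(c_v).
\]

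Summing over all compositions of $q$ yields $\mathbb{P}(B'_r(\mathbf{F}_\lambda) = f)$. The final step is to identify this sum with the prefactor $\frac{q h_\lambda(q)}{h_\lambda(1)}$ from \eqref{LoiForet}; equivalently, via generating functions one has to check
\[
\sum_{\ell \geq 1}\left(\frac{1-m_\lambda}{\Pi_\lambda(\theta_\lambda(0))}\right)^{\ell-1}\Pi_\lambda(x)^\ell \;=\; \frac{1}{h_\lambda(1)}\sum_{q \geq 1} q\, h_\lambda(q)\, x^q,
\]
which after summing the geometric series amounts to a closed-form identity. I would verify it using Lemma~\ref{Piexact} for $\Pi_\lambda$, the explicit form \eqref{generating_cplambda} of $C_\lambda$ (hence of $h_\lambda$), and the relation \eqref{eqn_m_lambda} between $m_\lambda$ and the parameter $h$ of \eqref{eqn_h_lambda}.

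The main obstacle is this final algebraic identification: while the $m_\lambda$ and $\theta_\lambda(0)$ cancellations in the middle step are essentially built into the construction of $\mathbf{F}_\lambda$, producing the coefficient $\frac{q h_\lambda(q)}{h_\lambda(1)}$ uses the status of $\pi_\lambda$ as the quasi-stationary measure of $X_\lambda$ together with the explicit form of $C_\lambda$ in a nontrivial way.
\end{idproof}
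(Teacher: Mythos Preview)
Your overall strategy---decompose according to the packet composition, plug in \eqref{tauforest0}--\eqref{tauforest1} together with the geometric law of $\mathbf{U}(\mathbf{F}_\lambda)$, observe the $m_\lambda$-cancellations, and reduce to a generating-function identity---is exactly the paper's. But there is a genuine gap in how you pass from the packet decomposition to the law of $B'_r(\mathbf{F}_\lambda)$, and it makes your final identity false.

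The expression you obtain for a fixed composition is correct: it is \eqref{distrib_B_tilde}, namely $\P\bigl(\widetilde{B}_r(\mathbf{F}_\lambda)=(f,(p_1,\dots,p_\ell))\bigr)$ for the \emph{unreordered} ball $B_r(\mathbf{F}_\lambda)$. In $B_r(\mathbf{F}_\lambda)$ the $\boldsymbol{\tau}^1$-packet always occupies the first $p_1$ positions, and the distinguished vertex sits in tree $i_0$ for some $1\le i_0\le p_1$. Passing to $B'_r$ cyclically shifts so that the distinguished vertex lands in position $1$; the $\boldsymbol{\tau}^1$-packet then generically wraps around the end of the list (positions $1,\dots,p_1-i_0+1$ together with $q-i_0+2,\dots,q$). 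Hence your claim that the packets are ``consecutive in $f$'' fails for $B'_r$, and summing your displayed contribution over all compositions of $q$ yields only $\P\bigl(B_r(\mathbf{F}_\lambda)=f\bigr)$ for the single shift with $i_0=1$, not $\P\bigl(B'_r(\mathbf{F}_\lambda)=f\bigr)$.

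The missing step is the further sum over the cyclic shifts of $f$ (equivalently over $i_0=1,\dots,q$), which turns $\sum_{p_1+\cdots+p_\ell=q}\prod_j\pi_\lambda(p_j)$ into $\sum_{p_1+\cdots+p_\ell=q}p_1\prod_j\pi_\lambda(p_j)$. In generating-function terms the correct left-hand side is
\[
\frac{x\,\Pi_\lambda'(x)}{1-\sqrt{1-4h}\,\Pi_\lambda(x)},
\]
not $\Pi_\lambda(x)\big/\bigl(1-\sqrt{1-4h}\,\Pi_\lambda(x)\bigr)$ as you wrote. Your identity is indeed false already at $\lambda=\lambda_c$ (where $\sqrt{1-4h}=0$): it would assert $\pi_{\lambda_c}(q)=q\,h_{\lambda_c}(q)/h_{\lambda_c}(1)$, whereas a direct check from \eqref{formula_Cp} and Lemma~\ref{Piexact} gives $q\,h_{\lambda_c}(q)/h_{\lambda_c}(1)=q\,\pi_{\lambda_c}(q)$. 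Once the cyclic-reordering step is inserted, the remainder of your outline coincides with the paper's proof.
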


\begin{proof}
In all this proof, we will fix $0 < \lambda \leq \lambda_c$ and omit the parameter $\lambda$ in the notation. The idea of the proof is the following: we will first compute the law of $\widetilde{B}_r(\mathbf{F})$, then that of $B_r(\mathbf{F})$ and finally that of $B_r'(\mathbf{F})$.

First, we know that $B_r(\mathbf{U}(\mathbf{F}))$ is a tree with height $r$ in which all the leaves lie at height $r$. Moreover, if $t$ is such a tree, we have
\begin{equation}\label{loiarbre}
\mathbb{P} \left( B_r(\mathbf{U}(\mathbf{F}))=t \right)=\prod_{v \in t_{< r}} m(1-m)^{c_v-1}=m^{|t_{<r}|} (1-m)^{|t|-1-|t_{<r}|},
\end{equation}
where $t_{< r}$ is the set of vertices of $t$ at height strictly less than $r$, and $c_v$ is the number of children of a vertex $v$.

Now let $\left( f, (p_1, \dots, p_{\ell}) \right)$ be a possible value of $\widetilde{B}_r(\mathbf{F})$, i.e. $f=(t_1, \dots, t_p)$ is a forest of height $r$, the positive integers $p_1, \dots, p_{\ell}$ satisfy $p_1+ \dots + p_{\ell}=p$, and $f$ has a unique vertex at reverse height $0$ which lies in one of the trees $t_1, \dots, t_{p_1}$. For every $1 \leq j \leq \ell$, we write $f_j=(t_{p_1+\dots+p_{j-1} +1}, \dots, t_{p_1+\dots+p_j})$ and $h(f_j)$ for the height of the forest $f_j$. Each of these forests corresponds to one of the infinite trees that reach reverse height $r$.

We now check that the tree $B_r(\mathbf{U}(\mathbf{F}))$ is a measurable function of $\widetilde{B}_r(\mathbf{F})$ (although not of $B_r(\mathbf{F})$). The reader may find helpful to look at Figure \ref{tree_mathfrak_T} while reading what follows. We consider the tree $u$ whose vertices are the pairs $(j,i)$ with $1 \leq j \leq \ell$ and $r-h(f_j) \leq i \leq r$, and in which:
\begin{itemize}
\item[(i)]
the pair $(1,0)$ is the root vertex, 
\item[(ii)]
if $i>r-h(f_j)$, then the parent of $(j,i)$ is $(j,i-1)$,
\item[(iii)]
if $i=r-h(f_j)$, then the parent of $(j,i)$ is $(k,i-1)$, where $k$ is the greatest integer such that $k<j$ and $h(f_k) \geq r-i+1$.
\end{itemize}
This is the natural analog of $\mathbf{U}(\mathbf{f})$ for the finite forest $f$ (cf. Figure \ref{tree_mathfrak_T}). Note that for every $1 \leq j \leq \ell$, there are exactly $h(f_j)+1$ vertices of $u$ of the form $(j,i)$, exactly one of which lies at height $r$. Hence, we have
\begin{equation}\label{sum_h_equal_numer_vertices}
\sum_{j=1}^{\ell} h(f_j)=|u_{<r}|.
\end{equation}
It is easy to see that if $\widetilde{B}_r(\mathbf{F})=\left( f, (p_1, \dots, p_{\ell}) \right)$, then $B_r(\mathbf{U}(\mathbf{F}))=u$.
It is also possible to read the heights $h(f_j)$ on the tree $u$: first, we have $h(f_1)=r$. Moreover, if $2 \leq j \leq \ell$, let $x_j$ be the $j$-th leaf in $u$ starting from the left. Then $h(f_j)$ is the smallest $h$ such that the ancestor of $x_j$ at height $r-h$ is not the leftmost child of its parent (cf. Figure \ref{tree_mathfrak_T}).
Hence, conditionally on $B_r(\mathbf{U}(\mathbf{F}))=u$, the forest $\widetilde{B}_r(\mathbf{F})$ is a concatenation of $\ell$ independent forests of heights $h(f_1), \dots, h(f_{\ell})$. The first forest has the same distribution as $B_r(\boldsymbol{\tau}^1)$ and, for $j \geq 2$, the $j$-th forest has the same distribution as $B_{h(f_j)}(\boldsymbol{\tau}^0)$.
By combining this observation with \eqref{loiarbre} and Lemma \ref{distrib_ball_tau}, we obtain
\begin{align} \label{distrib_B_tilde}
\mathbb{P} \left( \widetilde{B}_r(\mathbf{F})=\left( f, (p_i)_{1 \leq i \leq \ell} \right) \right) &= m^{|u_{<r}|} (1-m)^{|u|-|u_{<r}|-1} \times \frac{\pi(p_1) m^{-r}}{\theta(0)} \prod_{v \in f_1} \theta (c_v) \nonumber \\
& \hspace{4.52cm} \times \prod_{j=2}^{\ell} \frac{\pi(p_j) m^{-h(f_j)}}{\Pi(\theta(0))} \prod_{v \in f_j} \theta (c_v) \nonumber \\
&= \frac{m^{|u_{<r}|-\sum_{j=1}^{\ell} h(f_j)} (1-m)^{|u|-|u_{<r}|-1} }{\theta(0) \, \Pi(\theta(0))^{\ell-1}} \prod_{j=1}^{\ell} \pi(p_j) \times \prod_{v \in f} \theta(c_v) \nonumber \\
&= \frac{1}{\theta(0)} \left(\frac{1-m}{\Pi(\theta(0))} \right)^{\ell-1} \prod_{j=1}^{\ell} \pi(p_j) \times \prod_{v \in f} \theta(c_v),
\end{align}
where in the end, we used \eqref{sum_h_equal_numer_vertices}. Moreover, by Lemma \ref{Piexact}, we can compute $\frac{1-m}{\Pi(g(0))} = \sqrt{1-4h}$.

We now compute the distribution of $B_r(\mathbf{F})$: let $f=(t_1, \dots, t_p)$ be a possible value of $B_r(\mathbf{F})$, and let $i_0$ be the index such that the only vertex of reverse height $0$ belongs to $t_{i_0}$. We need to sum \eqref{distrib_B_tilde} over all the possible values of $\ell \geq 1$ and $p_1, \dots, p_{\ell} \geq 1$ such that $\sum_{j=1}^{\ell} p_j=p$ and $p_1 \geq i_0$ (by construction of $\mathbf{F}$, the lowest vertex always belongs to the leftmost infinite tree). We obtain
\begin{equation} \label{loiBr}
\mathbb{P} \left( B_r(\mathbf{F})=f \right)=\frac{1}{\theta(0)} \bigg( \sum_{\ell \geq 1} (1-4h)^{\frac{\ell-1}{2}} \sum_{\underset{p_1 \geq i_0}{p_1+ \dots +p_{\ell}=p}} \prod_{j=1}^{\ell} \pi(p_j) \bigg) \prod_{v \in f} \theta(c_v).
\end{equation}

Now let $f'=(t_1, \dots, t_p)$ be a possible value of $B'_r(\mathbf{F})$, i.e. a forest of height $r$ in which the only vertex of reverse height $0$ lies in $t_1$. To obtain $\mathbb{P} \left( B'_r(\mathbf{F})=f' \right)$, we need to sum Equation \eqref{loiBr} over all the forests one may get by applying a cyclic permutation to the trees of $f'$. The values of $p$ and $\prod_{v \in f} \theta(c_v)$ are the same for all these forests, but the value of $i_0$ ranges from $1$ to $p$, so we have
\begin{eqnarray*}
\mathbb{P} \left( B'_r(\mathbf{F})=f' \right) &=& \frac{1}{\theta(0)} \bigg( \sum_{i_0=1}^p \sum_{\ell \geq 1} (1-4h)^{\frac{\ell-1}{2}} \sum_{\underset{p_1 \geq i_0}{p_1+ \dots +p_{\ell}=p}} \prod_{j=1}^{\ell} \pi(p_j) \bigg) \prod_{v \in f} \theta(c_v)\\
&=& \frac{1}{\theta(0)} \bigg( \sum_{\ell \geq 1} (1-4h)^{\frac{\ell-1}{2}} \sum_{p_1+ \dots +p_{\ell}=p} p_1 \prod_{j=1}^{\ell} \pi(p_j) \bigg) \prod_{v \in f} \theta(c_v).
\end{eqnarray*}
By comparing this to \eqref{LoiForet}, we only need to prove that for every $p \geq 1$,
\begin{equation}\label{coefficients}
 \sum_{\ell \geq 1} (1-4h)^{\frac{\ell-1}{2}} \sum_{p_1+ \dots +p_{\ell}=p} p_1 \prod_{j=1}^{\ell} \pi(p_j) =\frac{p h_{\lambda}(p)}{h_{\lambda}(1)}.
\end{equation}
It is enough to show that the generating functions of both sides coincide. But the generating function of the left-hand side is
\begin{eqnarray*}
\sum_{p \geq 1} \bigg( \sum_{\ell \geq 1} (1-4h)^{\frac{\ell-1}{2}} \sum_{p_1+ \dots +p_{\ell}=p} p_1 \prod_{j=1}^{\ell} \pi(p_j) \bigg) y^p &=& \sum_{\ell \geq 1} (1-4h)^{\frac{\ell-1}{2}} \,y\, \Pi'(y) \Pi(y)^{\ell-1}\\
&=& \frac{y \, \Pi'(y)}{1-\sqrt{1-4h} \, \Pi(y)},
\end{eqnarray*}
which is explicitly known by Lemma \ref{Piexact}.
On the other hand, we recall from Lemma \ref{skeleton} that $h_{\lambda}(p)=\frac{1}{p} \left( 8+\frac{1}{h} \right)^{-p} c_{\lambda}(p)$. Hence, the generating function of the right-hand side of \eqref{coefficients} is
\[ \frac{1}{c_{\lambda}(1)} \left( 8+\frac{1}{h} \right) \sum_{p \geq 1} \left( 8+\frac{1}{h} \right)^{-p} c_{\lambda}(p) \, y^{p} =\frac{1}{y} \frac{1}{c_{\lambda}(1)} \left( 8+\frac{1}{h} \right) C_{\lambda} \left( \frac{h}{1+8h} y \right),\]
where $C_{\lambda}$ is given by \eqref{generating_cplambda}. Hence, it is easy to check that the generating functions of both sides of \eqref{coefficients} coincide, which concludes the proof.
\end{proof}

\begin{proof}[End of the proof of Theorem \ref{thm1_GW}]
By Proposition \ref{otherconstructionForest}, the skeleton of $\T_{\lambda}^1$ has the same distribution as $\mathbf{F}_{\lambda}$. In particular, the infinite leftmost geodesics are the paths separating infinite trees in $\mathbf{F_{\lambda}}$, so their union is isomorphic to $\mathbf{U}(\mathbf{F}_{\lambda})$. Moreover, consider the strips delimited by $\mathbf{U}(\mathbf{F}_{\lambda})$ in $\T^1_{\lambda}$. Let $e_1$ be the root edge of $\T_{\lambda}^1$ (i.e. the loop on its boundary).
The skeletons of the strips that are not adjacent to $e_1$ are independent copies of $\boldsymbol{\tau}^0$ and all the holes are filled with independent Boltzmann triangulations with parameter $\lambda$, so these strips are independent copies of $S^0$.

The case of the strip containing $e_1$ needs to be handled more carefully because of the presence of the bottom boundary (actually, this is not exactly a strip in the sense of Definition \ref{defn_strip}). More precisely, let $S^{1,+}$ be the random strip obtained from $\boldsymbol{\tau}^1$ as on the left part of Figure \ref{root_transformation_strip}, where all the green holes are filled with independent Boltzmann triangulations with parameter $\lambda$. Then the strip of $\T_{\lambda}^1$ adjacent to $e_1$ has skeleton $\boldsymbol{\tau}^1$ and its holes are filled with Boltzmann triangulations, but it has a bottom boundary of length one, so it has the same distribution as $S^{1,+}$.

Finally, we recall that $\T_{\lambda}$ is the image of $\T_{\lambda}^1$ by the following root transformation. Let $f_1$ be the face of $\T_{\lambda}^1$ adjacent to its boundary loop $e_1$. We obtain $\T_{\lambda}$ by contracting $e_1$ and gluing together the two other sides of $f_1$. This transformation does not affect the tree of infinite leftmost geodesics and the strips that are not adjacent to $e_1$. Its effect on the strip $S^{1,+}$ adjacent to $e_1$ is described on Figure \ref{root_transformation_strip}, and the strip we obtain is then $S^1$. Note also that the root edge on the right of Figure \ref{root_transformation_strip} may be on the boundary of the strip, but in this case it is necessarily on the left boundary, so $S^1$ is always the strip containing the face on the \emph{right} of the root edge, as in the statement of the Theorem. This proves Theorem \ref{thm1_GW}.
\end{proof}

\begin{figure}
\begin{center}
\begin{tikzpicture}
\fill[green!20] (0.5,2)--(3,1)--(0,1);
\fill[green!20] (1.5,2) to[bend left=15] (3,1)--(1.5,2);
\fill[green!20] (1,0) to[bend left] (0,1)--(1,0);
\fill[green!20] (1,0)--(2,0)--(1,1)--(1,0);
\fill[green!20] (2,0) to[bend left] (2,1) to[bend left] (2,0);

\draw(1.5,2)--(3,1);
\draw(1.5,2) to[bend left=15] (3,1);
\draw(0.5,2)--(3,1);
\draw(1,1)--(1,0);
\draw(1,1)--(2,0);
\draw(2,1) to[bend left] (2,0);
\draw(2,1) to[bend right] (2,0);
\draw(0,1)--(1,0);

\draw[thick, blue] (0.5,2)--(1.5,2);
\draw[thick, blue] (1.5,2)--(2.5,2);
\draw[thick, blue] (0,1)--(1,1);
\draw[thick, blue] (1,1)--(2,1);
\draw[thick, blue] (2,1)--(3,1);

\draw[very thick, red, ->] (1,0)--(1.8,0);
\draw[very thick, red] (1,0)--(2,0);
\draw[thick, red] (1,0)--(1.3,0.4);
\draw[thick, red] (2,0)--(1.3,0.4);

\draw[thick, violet](1.25,2.5)--(1,2);
\draw[thick, violet](1.75,2.5)--(2,2);
\draw[thick, violet](1,2)--(0.5,1);
\draw[thick, violet](1,2)--(1.5,1);
\draw[thick, violet](1,2)--(2.5,1);
\draw[thick, violet](1.5,1)--(1.5,0);

\draw[thick, orange](1,0) to[bend left] (0,1)--(0.5,2)--(0.5,2.5);
\draw[thick, orange](2,0)--(3,1)--(2.5,2)--(2.5,2.5);

\draw (0.5,2) node{};
\draw (1.5,2) node{};
\draw (2.5,2) node{};
\draw (0,1) node{};
\draw (1,1) node{};
\draw (2,1) node{};
\draw (3,1) node{};
\draw (1,0) node{};
\draw (2,0) node{};
\draw (1.3,0.4) node{};

\draw (1,2) node[edge']{};
\draw (2,2) node[edge']{};
\draw (0.5,1) node[edge']{};
\draw (1.5,1) node[edge']{};
\draw (2.5,1) node[edge']{};
\draw (1.5,0) node[rootedge]{};

\draw[red] (1.5, -0.3) node[texte]{$e_1$};
\draw (5.5,1) node[texte]{$\xrightarrow[]{\mathrm{root \, transformation}}$};

\begin{scope}[shift={(8,0)}]
\fill[green!20] (0.5,2)--(3,1)--(0,1);
\fill[green!20] (1.5,2) to[bend left=15] (3,1)--(1.5,2);
\fill[green!20] (1.5,0) to[bend left] (0,1)--(1.5,0);
\fill[green!20] (1.5,0) to[bend left] (1,1) to[bend left] (1.5,0);
\fill[green!20] (1.5,0) to[bend left=15] (2,1) to[bend left=15] (1.5,0);

\draw(1.5,2)--(3,1);
\draw(1.5,2) to[bend left=15] (3,1);
\draw(0.5,2)--(3,1);
\draw(1,1) to[bend left] (1.5,0);
\draw(1,1) to[bend right] (1.5,0);
\draw(2,1) to[bend left=15] (1.5,0);
\draw(2,1) to[bend right=15] (1.5,0);
\draw(0,1)--(1.5,0);

\draw[thick, blue] (0.5,2)--(1.5,2);
\draw[thick, blue] (1.5,2)--(2.5,2);
\draw[thick, blue] (0,1)--(1,1);
\draw[thick, blue] (1,1)--(2,1);
\draw[thick, blue] (2,1)--(3,1);

\draw[thick, violet](1.25,2.5)--(1,2);
\draw[thick, violet](1.75,2.5)--(2,2);
\draw[thick, violet](1,2)--(0.5,1);
\draw[thick, violet](1,2)--(1.5,1);
\draw[thick, violet](1,2)--(2.5,1);

\draw[thick, orange](1.5,0) to[bend left] (0,1)--(0.5,2)--(0.5,2.5);
\draw[thick, orange](1.5,0)--(3,1)--(2.5,2)--(2.5,2.5);

\draw[very thick, red,->] (1.5,0)--(1.35,0.3);
\draw[very thick, red] (1.5,0)--(1.25,0.5);

\draw (0.5,2) node{};
\draw (1.5,2) node{};
\draw (2.5,2) node{};
\draw (0,1) node{};
\draw (1,1) node{};
\draw (2,1) node{};
\draw (3,1) node{};
\draw (1.5,0) node{};
\draw (1.25,0.5) node{};

\draw (1,2) node[edge']{};
\draw (2,2) node[edge']{};
\draw (0.5,1) node[edge']{};
\draw (1.5,1) node[edge']{};
\draw (2.5,1) node[edge']{};

\end{scope}
\end{tikzpicture}
\end{center}
\caption{On the left, the strip $S^{1,+}$ and its skeleton $\boldsymbol{\tau}^1$. On the right, the strip $S^1$ and its skeleton $\boldsymbol{\tau}^{1,*}$. The root transformation contracting the face $f_1$ (whose sides are in red on the left) sends the first strip to the second. The skeletons are in violet.}\label{root_transformation_strip}
\end{figure}
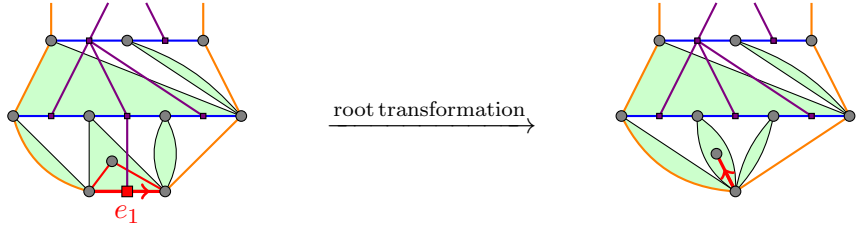

\subsection{Construction of reverse Galton--Watson trees and infinite strips}\label{subsec:construction_reversee_trees}

Our goal in this subsection is to construct the reverse subcritical trees $\boldsymbol{\tau}^0$ and $\boldsymbol{\tau}^1$ and to prove Lemma \ref{distrib_ball_tau} and a few useful estimates. We will also show that $\boldsymbol{\tau}^0$ is the local limit as $n \to +\infty$ of a subcritical Galton--Watson tree conditioned to die at generation $n$, seen from its last generation. This is different from the more usual Galton--Watson tree conditioned to survive \cite{Kes86}, where the tree is seen from its root. Moreover, $\boldsymbol{\tau}^1$ is just $\boldsymbol{\tau}^0$, conditioned to have only one vertex at reverse height $0$. Our trees will be built by a spine decomposition approach, which will be useful to obtain geometric estimates on these trees in Section 4. Since our proofs hold in a more general setting, we present the results of this subsection for any critical or subcritical Galton--Watson process, and denote by $g$ the generating function of the offspring distribution. The trees $\boldsymbol{\tau}^0_{\lambda}$ and $\boldsymbol{\tau}^1_{\lambda}$ that we need are obtained by taking $g=g_{\lambda}$.

We start with a vertical half-line which is infinite on the top side, that we call the \emph{spine}. The root of the tree will be the lowest point on the spine. For every $r  \geq 1$, we sample a random pair $(L_r, R_r)$ of integers such that all these pairs are independent and, for every $i,j \geq 0$, we have
\begin{equation}\label{distribLR}
\mathbb{P} \left( L_r=i, R_r=j \right)=\frac{g^{\circ r}(0)-g^{\circ (r-1)}(0)}{g^{\circ (r+1)} (0)-g^{\circ r}(0)} \theta(i+j+1) \, g^{\circ (r-1)}(0)^i \, g^{\circ r}(0)^j.
\end{equation}
To check that \eqref{distribLR} defines a probability measure, we just need to sum the right-hand side over pairs $(i,j)$ with a fixed value of $k=i+j$ and then sum over $k$. 
Similarly, by a straightforward computation, we obtain
\begin{equation}\label{eqn_expect_LR}
\E \left[ L_r+R_r \right] = \frac{g^{\circ r} (0) g'( g^{\circ r} (0))-g^{\circ (r-1)} (0) g'( g^{\circ (r-1)} (0)) }{g^{\circ (r+1)} (0)-g^{\circ r}(0)}-1.
\end{equation}

We call $s_r$ the vertex at height $r$ in the spine. For all $r \geq 2$, let $t_1^r, \dots, t_{L_r}^r$ be $L_r$ independent Galton--Watson trees with offspring distribution $\theta$ conditioned on having height at most $r-2$. We graft $L_r$ edges to the left of the vertex $s_r$, and the trees $t_1^r, \dots, t_{L_r}^r$ to the other ends of these edges. Similarly, for all $r \geq 1$, let $u_1^r, \dots, u_{R_r}^r$ be $R_r$ independent Galton--Watson trees with offspring distribution $\theta$ conditioned on having height at most $r-1$. We graft $R_r$ edges to the right of $s_r$ and the trees $u_1^r, \dots, u_{R_r}^r$ to the other ends of these edges (see Figure \ref{figureTau0}).

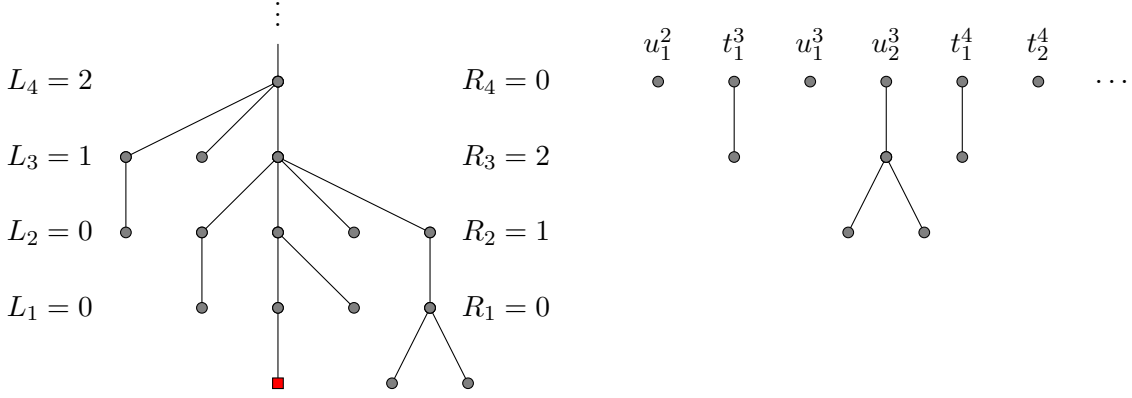
\begin{figure}
\begin{center}
\begin{tikzpicture}

\draw (0,0)node[rootedge]{}--(0,1)node{};
\draw (0,1)node{}--(0,2)node{};
\draw (0,2)node{}--(0,3)node{};
\draw (0,3)node{}--(0,4)node{};
\draw (0,4)node{}--(0,4.5);
\draw (0,2)node{}--(1,1)node{};
\draw (0,3)node{}--(-1,2)node{};
\draw (-1,2)node{}--(-1,1)node{};
\draw (0,3)node{}--(1,2)node{};
\draw (0,3)node{}--(2,2)node{};
\draw (2,2)node{}--(2,1)node{};
\draw (2,1)node{}--(1.5,0)node{};
\draw (2,1)node{}--(2.5,0)node{};
\draw (0,4)node{}--(-1,3)node{};
\draw (0,4)node{}--(-2,3)node{};
\draw (-2,3)node{}--(-2,2)node{};

\draw (0,5)node[texte]{$\vdots$};
\draw (-3,1)node[texte]{$L_1=0$};
\draw (-3,2)node[texte]{$L_2=0$};
\draw (-3,3)node[texte]{$L_3=1$};
\draw (-3,4)node[texte]{$L_4=2$};
\draw (3,1)node[texte]{$R_1=0$};
\draw (3,2)node[texte]{$R_2=1$};
\draw (3,3)node[texte]{$R_3=2$};
\draw (3,4)node[texte]{$R_4=0$};

\draw (5,4.5)node[texte]{$u^2_1$};
\draw (5,4)node{};
\draw (6,4.5)node[texte]{$t^3_1$};
\draw (6,4)node{}--(6,3)node{};
\draw (7,4.5)node[texte]{$u^3_1$};
\draw (7,4)node{};
\draw (8,4.5)node[texte]{$u^3_2$};
\draw (8,4)node{}--(8,3)node{};
\draw (8,3)node{}--(7.5,2)node{};
\draw (8,3)node{}--(8.5,2)node{};
\draw (9,4.5)node[texte]{$t^4_1$};
\draw (9,4)node{}--(9,3)node{};
\draw (10,4.5)node[texte]{$t^4_2$};
\draw (10,4)node{};
\draw (11,4)node[texte]{$\dots$};
\end{tikzpicture}
\end{center}
\caption{Construction of the tree $\boldsymbol{\tau}^0$. Every vertex lies below its parent.}\label{figureTau0}
\end{figure}

We denote by $\boldsymbol{\tau}^0$ the infinite tree we obtain, and we define a genealogy on it: for every $r \geq 1$, the children of $s_r$ are $s_{r-1}$ and the roots of the trees $t_i^r$ and $u_j^r$. Inside the trees $t_i^r$ and $u_j^r$, the genealogy is the usual one in a Galton--Watson tree. We fix the reverse height of the root at $0$ and declare that the parent of a vertex of reverse height $r$ has reverse height $r+1$ (it corresponds to the height of the vertices on Figure \ref{figureTau0}). By the conditioning we have chosen for the trees $t_i^r$ and $u_j^r$, every vertex of the tree has a nonnegative reverse height, and the root is the leftmost vertex with reverse height $0$.

\begin{lem}\label{taufinite}
For every $r \geq 0$, the tree $\boldsymbol{\tau}^0$ has a finite number of vertices at height $r$.
\end{lem}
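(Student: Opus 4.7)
The plan is to show $\E[Y_r] < \infty$, where $Y_r$ denotes the number of vertices of $\boldsymbol{\tau}^0$ at reverse height $r$; Markov's inequality will then give $Y_r < \infty$ a.s. Since each subtree $t_i^{r'}$ or $u_j^{r'}$ grafted along the spine is a Galton--Watson tree conditioned to have bounded height, each such subtree is a.s.\ finite, so the only thing to check is that the cumulative expected contribution from the infinitely many grafted subtrees is finite.

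First I would decompose $Y_r$ along the spine: besides $s_r$ itself (contributing $1$), vertices at reverse height $r$ can only come from subtrees grafted at $s_{r'}$ with $r' > r$. Since the root of such a subtree sits at reverse height $r'-1$, its vertices at reverse height $r$ are precisely its $(r'-1-r)$-th generation, of size $Z_{r'-1-r}$. Taking expectations and using independence of the subtrees conditionally on $(L_{r'}, R_{r'})_{r'\ge1}$, this yields
\[ \E[Y_r] = 1 + \sum_{r' \geq r+1} \E[R_{r'}]\, \E[Z_{r'-1-r} \mid Z_{r'} = 0] + \sum_{r' \geq r+2} \E[L_{r'}]\, \E[Z_{r'-1-r} \mid Z_{r'-1} = 0]. \]

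Next I would bound each conditional expectation. Setting $\pi_j := g^{\circ j}(0)$ and using $\P(Z_k = 0 \mid Z_n) = \pi_{k-n}^{Z_n}$ together with $\E[Z_n s^{Z_n}] = s\,(g^{\circ n})'(s)$, one obtains
\[ \E[Z_n \mid Z_k = 0] = \frac{\pi_{k-n}\,(g^{\circ n})'(\pi_{k-n})}{\pi_k}. \]
Since $g'$ is non-decreasing on $[0,1]$ with $g'(1) = m \leq 1$, this is at most $m^n / \pi_k$. In the subcritical case this decays geometrically in $r'$ while $\pi_k$ stays bounded below. A direct Taylor analysis of the ratio in \eqref{eqn_expect_LR}, using the expansion $1 - \pi_r \sim K m^r$, shows that $\E[L_{r'}+R_{r'}]$ tends to the finite limit $g''(1)/m$ as $r' \to \infty$ and is therefore bounded. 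Plugging these estimates into the decomposition above, $\E[Y_r]$ is dominated by a convergent geometric series.

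The main obstacle is the critical case $m=1$, where the bound $(g^{\circ n})'(x) \leq m^n$ degenerates. There I would instead use the explicit formula \eqref{itererg_critique} to derive the polynomial decay $(g^{\circ n})'(x) = O(n^{-2})$ for each $x$ bounded away from $1$, and a more careful second-order expansion to check that $\E[L_{r'}+R_{r'}]$ remains bounded; the resulting series is then polynomially summable in $r'$, and the conclusion $\E[Y_r] < \infty$ follows identically.
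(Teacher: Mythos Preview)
Your approach is correct for the specific generating functions $g_\lambda$ (both subcritical and critical), but it takes a genuinely different route from the paper. The paper does not compute $\E[Y_r]$; instead it runs a Borel--Cantelli argument: letting $A_r^{r'}$ be the event that some tree grafted at $s_{r'}$ reaches reverse height $r$, it shows $\sum_{r'>r}\P(A_r^{r'})<\infty$. The key step is to rewrite $\E[L_{r'}+R_{r'}]=(\tilde g(x_{r'})-\tilde g(x_{r'-1}))/(x_{r'+1}-x_{r'})$ with $x_r=g^{\circ r}(0)$ and $\tilde g(x)=xg'(x)-g(x)$, then bound the factor $(x_{r'}-x_{r'-r-1})/(x_{r'+1}-x_{r'})$ by a constant using only that $g$ is $1$-Lipschitz, and finally telescope $\sum_{r'}(\tilde g(x_{r'})-\tilde g(x_{r'-1}))$ to $\tilde g(1)-\tilde g(x_r)<\infty$.

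What each approach buys: yours delivers the stronger conclusion $\E[Y_r]<\infty$, not merely a.s.\ finiteness. The paper's argument, on the other hand, works for \emph{any} subcritical or critical offspring law with no moment assumption beyond $m\le 1$; this generality is the explicit intent of the subsection. Your argument, as written, needs $g''(1)<\infty$ in the subcritical case (to get $\E[L_{r'}+R_{r'}]\to g''(1)/m$) and the explicit formula \eqref{itererg_critique} in the critical case. The telescoping trick sidesteps both.

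Two small remarks: the left sum should start at $r'=r+1$ rather than $r+2$ for $r\ge 1$ (the roots of the $t_i^{r+1}$ already sit at reverse height $r$), though this only drops a single finite term; and differentiating \eqref{itererg_critique} at fixed $x<1$ actually gives $(g^{\circ n})'(x)=O(n^{-3})$, which only strengthens your critical-case bound.
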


\begin{proof}
We fix $r \geq 0$ and take $r'>r$, and we define $A_r^{r'}$ as the event
\[\{ \mbox{one of the trees grafted at $s_{r'}$ reaches reverse height $r$ in $\boldsymbol{\tau}^0$} \}.\]
We want to show that $\P \big( A_r^{r'} \big)$ decreases fast enough in $r'$.
Consider one of the trees $t_i^{r'}$ grafted on the left of the spine at $s_{r'}$. This is a Galton--Watson tree conditioned to have height at most $r'-2$, so the probability that $t_i^{r'}$ has height at least $r'-r-1$ is $\frac{g^{\circ (r'-1)}(0)-g^{\circ (r'-r-1)}(0)}{g^{\circ (r'-1)}(0)}$. The denominator is larger than $\frac{1}{2}$ for $r'$ large enough.
Hence, the probability that one of the $L_{r'}$ trees grafted on the left of $s_{r'}$ reaches reverse height $r$ is at most $2 \left( g^{\circ (r'-1)}(0)-g^{\circ (r'-r-1)}(0) \right) \E \left[ L_{r'} \right]$.
Similarly, the probability of the analog event on the right is at most $2 \left( g^{\circ r'}(0)-g^{\circ (r'-r-1)}(0) \right) \E \left[ R_{r'} \right]$. Therefore, for $r'$ large enough, we have
\[ \P \left( A_r^{r'} \right) \leq 2 \left( g^{\circ r'}(0)-g^{\circ (r'-r-1)}(0) \right) \E \left[ L_{r'}+R_{r'} \right]. \]
Moreover, the right-hand side of \eqref{eqn_expect_LR} can be rewritten $\frac{\widetilde{g}(x_r)-\widetilde{g}(x_{r-1})}{x_{r+1}-x_r}$, where we write $x_r=g^{\circ r}(0)$ and $\widetilde{g}(x)=x g'(x)-g(x)$ for $x \in [0,1]$. We want to prove $\sum_{r'>r} \P \left( A_r^{r'} \right)<+\infty$, that is
\begin{equation}\label{eqn:bidouille_xr}
2 \sum_{r'>r} \frac{x_{r'}-x_{r'-r-1}}{x_{r'+1}-x_{r'}} \left( \widetilde{g}(x_{r'})-\widetilde{g}(x_{r'-1}) \right)<+\infty.
\end{equation}
Since $g$ is $1$-Lipschitz, the sequence $(x_{r'+1}-x_{r'})_{r' \geq 0}$ is nonincreasing, so we have
\[ \frac{x_{r'}-x_{r'-r-1}}{x_{r'+1}-x_{r'}} \leq (r+1) \frac{x_{r'-r}-x_{r'-r-1}}{x_{r'+1}-x_{r'}} = \frac{(r+1) \left( x_{r'-r}-x_{r'-r-1} \right) }{g^{\circ (r+1)}(x_{r'-r})-g^{\circ (r+1)}(x_{r'-r-1})} \to \frac{r+1}{\left( g^{\circ(r+1)} \right)'(1)} \]
as $r' \to +\infty$. In particular, in \eqref{eqn:bidouille_xr}, the first factor is bounded while the second is nonnegative and telescopic, so the sum converges. Therefore, a.s. $A_r^{r'}$ does not occur for $r'$ large enough, which proves the lemma.
\end{proof}

For $r \geq 0$, let $d_r(\boldsymbol{\tau}^0)$ be the tree of descendants of the $r$-th vertex $s_r$ of the spine.

\begin{lem}\label{GW_conditioned_die_at_r}
The tree $d_r(\boldsymbol{\tau}^0)$ has the same distribution as a Galton--Watson tree with offspring distribution $\theta$, conditioned to have height exactly $r$.
\end{lem}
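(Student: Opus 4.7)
\begin{idproof}
My plan is a straightforward induction on $r$, based on the standard leftmost spine decomposition of a Galton--Watson tree conditioned to reach a given generation. For the base case $r=0$, the tree $d_0(\boldsymbol{\tau}^0)$ is just the single vertex $s_0$, which is almost surely the unique plane tree of height $0$; on the other hand, a Galton--Watson tree conditioned to have height exactly $0$ is a.s. reduced to its root. So the base case is immediate.

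For the inductive step, I will first describe the target decomposition on a Galton--Watson tree $T$ with offspring distribution $\theta$, conditioned to have height exactly $r$. I identify the leftmost child of the root of $T$ whose subtree has height exactly $r-1$ (such a child exists because $T$ has height $r$); this singles out one subtree and splits the other children of the root into $L$ left siblings, whose subtrees have height at most $r-2$, and $R$ right siblings, whose subtrees have height at most $r-1$. Using the elementary identity $\P(\mathrm{height}(T)\leq k)=g^{\circ (k+1)}(0)$, a direct computation shows that the joint distribution of $(L,R)$ under this conditioning is
\[
\frac{\theta(i+j+1)\, g^{\circ (r-1)}(0)^i \, \left(g^{\circ r}(0)-g^{\circ (r-1)}(0)\right)\, g^{\circ r}(0)^j}{g^{\circ (r+1)}(0)-g^{\circ r}(0)},
\]
which is precisely \eqref{distribLR}. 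Moreover, conditionally on $(L,R)=(i,j)$, the $i$ left subtrees are i.i.d.\ Galton--Watson trees conditioned to have height at most $r-2$, the $j$ right subtrees are i.i.d.\ Galton--Watson trees conditioned to have height at most $r-1$, the distinguished middle subtree is a Galton--Watson tree conditioned to have height exactly $r-1$, and all of these are independent.

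To close the induction, I will simply read this same decomposition off the construction of $\boldsymbol{\tau}^0$: by definition, the root $s_r$ of $d_r(\boldsymbol{\tau}^0)$ has $L_r$ left children carrying the trees $t_i^r$ (conditioned on height at most $r-2$), a spine child $s_{r-1}$ carrying $d_{r-1}(\boldsymbol{\tau}^0)$, and $R_r$ right children carrying the trees $u_j^r$ (conditioned on height at most $r-1$), with all these subtrees independent conditionally on $(L_r,R_r)$. The induction hypothesis identifies $d_{r-1}(\boldsymbol{\tau}^0)$ with a Galton--Watson tree conditioned to have height exactly $r-1$, and \eqref{distribLR} is by construction the law of $(L_r,R_r)$, so the two decompositions match and the distributions coincide. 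The statement is thus entirely reduced to the computation in the second paragraph; there is no substantive obstacle, since the measure \eqref{distribLR} was precisely engineered to produce this identification.
\end{idproof}
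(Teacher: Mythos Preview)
Your proof is correct. Both your argument and the paper's rest on the same leftmost-spine decomposition, but you organise it differently: the paper fixes a tree $t$ of height $r$, writes $\P(d_r(\boldsymbol{\tau}^0)=t)$ as a product over the spine using \eqref{distribLR} and the laws of the grafted trees, and observes that the telescoping leaves exactly $\frac{1}{g^{\circ(r+1)}(0)-g^{\circ r}(0)}\prod_{v\in t}\theta(c_v)$. You instead peel off one generation at a time by induction, matching the root-level decomposition of the conditioned Galton--Watson tree to the top of the construction. Your route has the advantage of explaining where \eqref{distribLR} comes from rather than merely verifying it, while the paper's direct computation yields the explicit density $\prod_{v\in t}\theta(c_v)$ up to normalisation in one line, which is convenient for the subsequent proof of Lemma~\ref{distrib_ball_tau}. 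Either way the content is the same.
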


\begin{proof}
Let $t$ be a finite plane tree of height $r$. Let $s_0(t)$ be its leftmost vertex of reverse height $0$ and let $s_r(t)$ be its root. Let $\left( s_0(t), s_1(t), \dots, s_r(t) \right)$ be the unique geodesic path from $s_0(t)$ to $s_r(t)$. For every $1 \leq i \leq r$, let $\ell_i(t)$ (resp. $r_i(t)$) be the number of children of $s_i(t)$ on the left (resp. on the right) of $s_{i-1}(t)$, and let $\left( v^i_j(t) \right)_{1 \leq j \leq \ell_i(t)}$ (resp. $\left( w^i_j(t) \right)_{1 \leq j \leq r_i(t)}$) be those children. Finally, for every $i$ and $j$, let $t_j^i(t)$ (resp. $u_j^i(t)$) be the tree of descendants of $v^i_j(t)$ (resp. $w^i_j(t)$). Then we have
\begin{equation}\label{distrib_tr(tau0)}
\P \left( d_r(\boldsymbol{\tau}^0)=t \right)=\prod_{i=1}^r \P \left( L_i=\ell_i(t), R_i=r_i(t) \right) \times \prod_{i=1}^r \left( \prod_{j=1}^{\ell_i(t)} \P \left( t^i_j=t^i_j(t) \right) \times \prod_{j=1}^{r_i(t)} \P \left( u^i_j=u^i_j(t) \right) \right).
\end{equation}
Moreover, by definition of $t_j^i$ and $u_j^i$, we have
\[\P \left( t^i_j=t^i_j(t) \right)=\frac{1}{g^{\circ (r-1)}(0)} \prod_{v \in t^i_j(t)} \theta(c_v) \quad \mbox{ and } \quad \P \left( u^i_j=u^i_j(t) \right)=\frac{1}{g^{\circ r}(0)} \prod_{v \in u^i_j(t)} \theta(c_v). \]
By combining this with \eqref{distribLR} and \eqref{distrib_tr(tau0)}, we obtain
\[ \P \left( d_r(\boldsymbol{\tau}^0)=t \right)= \frac{1}{g^{\circ (r+1)}(0)-g^{\circ r}(0)} \prod_{v \in t} \theta(c_v),\]
which concludes.
\end{proof}

Now let $r \geq 0$. By Lemma \ref{taufinite}, there is $r' \geq r$ such that all the vertices of reverse height at most $r$ lie in $d_{r'}(\boldsymbol{\tau}^0)$. Hence, $\boldsymbol{\tau}^0$ is the a.s. local limit as $r' \to +\infty$ of $d_{r'}(\boldsymbol{\tau}^0)$ rooted at its leftmost vertex of height $r'$. By Lemma \ref{GW_conditioned_die_at_r}, this proves that $\boldsymbol{\tau}^0$ is the local limit (in distribution) as $r' \to +\infty$ of Galton--Watson trees conditioned on extinction at time $r'$, seen from the last generation.

In particular, for every $r \geq 0$, let $Y(r)$ be the number of vertices of $\boldsymbol{\tau}^0$ at reverse height $r$. Then $\left( Y(-r) \right)_{r \leq 0}$ is the limit in distribution as $n \to +\infty$ of a Galton--Watson process with offspring distribution $\theta$, started from $1$ at time $-n$ and conditioned on extinction at time exactly $1$. Hence, $Y$ has the same distribution as the reverse Galton--Watson process described by Esty in \cite{E75}, started from $0$ at time $-1$. In particular, by Equation 3 of \cite{E75}, we obtain explicitly the distribution of $Y(r)$.

\begin{lem}\label{distribY}
For all $r \geq 0$ and $p \geq 1$, we have
\[\mathbb{P} \left( Y(r)=p\right)=\frac{\pi(p) m^{-r}}{\Pi(\theta(0))} \left( g^{\circ (r+1)} (0)^p - g^{\circ r} (0)^p \right).\]
\end{lem}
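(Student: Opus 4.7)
The approach is to identify $Y$ with the reverse Galton--Watson process of Esty \cite{E75} via the spine construction, and then translate Esty's Equation 3 into the notation of the lemma. Combining Lemmas \ref{taufinite} and \ref{GW_conditioned_die_at_r} realizes $Y(r)$, for each fixed $r$, as the limit in distribution as $r' \to \infty$ of the generation-$r'$ population in a Galton--Watson tree with offspring distribution $\theta$ conditioned on having height exactly $r+r'$; this is precisely Esty's setup.

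For the explicit shape of the formula I would argue directly. If $T$ is an unconditioned Galton--Watson tree with offspring $\theta$ and $Z_k$ its generation-$k$ size, the branching property gives
\[
\mathbb{P}\!\left(Z_{r'}=p,\ \mathrm{ht}(T) = r+r'\right) = \mathbb{P}(Z_{r'}=p) \left( g^{\circ(r+1)}(0)^p - g^{\circ r}(0)^p \right),
\]
since conditional on $Z_{r'}=p$ the $p$ subtrees rooted at depth $r'$ are i.i.d.\ and $\mathrm{ht}(T) = r+r'$ iff their maximal height equals exactly $r$. Summing over $p$ yields $\mathbb{P}(\mathrm{ht}(T)=r+r')=g^{\circ(r+r'+1)}(0)-g^{\circ(r+r')}(0)$. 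Dividing and letting $r' \to \infty$, using $\mathbb{P}(Z_{r'}=p)/\mathbb{P}(Z_{r'}=1) \to \pi(p)$, I obtain
\[
\mathbb{P}(Y(r)=p) = C(r)\, \pi(p) \left( g^{\circ(r+1)}(0)^p - g^{\circ r}(0)^p \right)
\]
for some constant $C(r)$ independent of $p$. Summing over $p\geq 1$ and requiring the result to be $1$ forces $C(r) = 1/(\Pi(g^{\circ(r+1)}(0)) - \Pi(g^{\circ r}(0)))$.

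The main computational step, and the only thing distinguishing this generic form from the formula in the lemma, is the identity $\Pi_\lambda(g_\lambda^{\circ(r+1)}(0)) - \Pi_\lambda(g_\lambda^{\circ r}(0)) = m_\lambda^r\, \Pi_\lambda(\theta(0))$. I would establish the cleaner closed form $\Pi_\lambda(g_\lambda^{\circ r}(0)) = (1 - m_\lambda^r)/\sqrt{1-4h}$ by direct substitution: setting $u = \sqrt{1-4h}$ and using $e^{-2b_\lambda} = m_\lambda = (1-u)/(1+u)$, Lemma \ref{itererg} yields after simplification
\[
1 - g_\lambda^{\circ r}(0) = \frac{u^2 m_\lambda^{r+1}}{h\,(1-m_\lambda^{r+1})^2}, \qquad \sqrt{1-4h\, g_\lambda^{\circ r}(0)} = \frac{u\,(1+m_\lambda^{r+1})}{1-m_\lambda^{r+1}},
\]
and plugging these into the explicit expression for $\Pi_\lambda$ from Lemma \ref{Piexact} collapses everything to $(1-m_\lambda^r)/u$. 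Successive differences then form a geometric sequence with ratio $m_\lambda$ and first term $\Pi_\lambda(\theta(0)) = (1-m_\lambda)/u$, which combined with the previous paragraph gives exactly the stated formula.
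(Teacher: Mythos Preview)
Your proof is correct and considerably more self-contained than the paper's, which simply identifies $Y$ with Esty's reverse Galton--Watson process and cites Equation~3 of \cite{E75} for the formula. You instead rederive that formula from first principles: the conditional limit argument giving $\mathbb{P}(Y(r)=p)$ proportional to $\pi(p)\bigl(g^{\circ(r+1)}(0)^p-g^{\circ r}(0)^p\bigr)$ is clean, and your explicit computation of $\Pi_\lambda\bigl(g_\lambda^{\circ r}(0)\bigr)=(1-m_\lambda^r)/\sqrt{1-4h}$ via Lemmas~\ref{itererg} and~\ref{Piexact} checks out.

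One remark on scope: Section~\ref{subsec:construction_reversee_trees} states its results for a general subcritical offspring law, so Lemma~\ref{distribY} is meant to hold without the subscript $\lambda$. Your normalizing-constant step, as written, is specific to $g_\lambda$. The fully general statement follows more directly from the functional equation $\Pi\circ g = m\,\Pi + \Pi(\theta(0))$, which is an immediate consequence of the defining limit $\Pi(x)=\lim_n \bigl(g^{\circ n}(x)-g^{\circ n}(0)\bigr)/(g^{\circ n})'(0)$ together with $(g^{\circ(n+1)})'(0)/(g^{\circ n})'(0)\to m$. Iterating gives $\Pi(g^{\circ r}(0))=\Pi(\theta(0))\,(1-m^r)/(1-m)$, hence $\Pi(g^{\circ(r+1)}(0))-\Pi(g^{\circ r}(0))=m^r\,\Pi(\theta(0))$ with no computation. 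This both generalizes your argument and shortens it; your explicit verification is then a reassuring sanity check rather than the crux.
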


We also define $\boldsymbol{\tau}^1$ as the tree $\boldsymbol{\tau}^0$, conditioned on $Y(0)=1$. We can now prove Lemma \ref{distrib_ball_tau}.
%Equivalently, we could build $\boldsymbol{\tau}^1$ by describing its spine decomposition. It is roughly the same as that of $\boldsymbol{\tau}^1$, with only two differences:
%\begin{itemize}
%\item[(i)] the distribution \eqref{distribLR} is replaced by
%\[\mathbb{P} \left( L_r=i, R_r=j \right)=\frac{1}{g' (g^{\circ (r-1)}(0))} \theta(i+j+1) \, g^{\circ (r-1)}(0)^{i+j},\]
%\item[(ii)] the trees $u_i^r$ that are grafted on the right have the distribution of Galton--Watson trees conditioned on having height at most $r-2$.
%\end{itemize}

\begin{proof}[Proof of Lemma \ref{distrib_ball_tau}]
We start with the first part of the lemma (i.e. the part related to $\boldsymbol{\tau}^0$).

Let $R$ be the smallest $r'$ such that $s_{r'}$ is an ancestor of all the vertices at reverse height $r$ in $\boldsymbol{\tau}^0$. For every $r'>r$, the event $\{R \leq r'\}$ depends only on the trees $t_j^i$ and $u_j^i$ for $i>r'$, so it is independent of the tree $d_{r'}(\boldsymbol{\tau}^0)$. We write $Y(r,r')$ for the number of descendants of $s_{r'}$ at reverse height $r$, and $B_r^{r'} \! \left( \boldsymbol{\tau}^0 \right)$ for the forest consisting of the trees of descendants of these vertices. We have
\begin{align*}
\P \left( B_r ( \boldsymbol{\tau}^0 )=(t_1, \dots, t_p) \right) &= \lim_{r' \to +\infty} \P \left( R \leq r', B_r ( \boldsymbol{\tau}^0 )=(t_1, \dots, t_p) \right)\\
&= \lim_{r' \to +\infty} \P \left( R \leq r', B_r^{r'} \! ( \boldsymbol{\tau}^0 )=(t_1, \dots, t_p) \right)\\
&= \lim_{r' \to +\infty} \P \left(  R \leq r' \right) \P \left( Y(r,r')=p \right)\\
& \hspace{2cm} \times \P \left( B_r^{r'} \! ( \boldsymbol{\tau}^0 )=(t_1,\dots, t_p) \big| Y(r,r')=p \right).
\end{align*}
But by Lemma \ref{GW_conditioned_die_at_r}, the tree $d_{r'}(\boldsymbol{\tau}^0)$ has the same distribution as a Galton--Watson tree conditioned to have height $r'$. From here, it is easy to show that the distribution of $B_r^{r'} ( \boldsymbol{\tau}^0 )$ conditioned on $Y(r,r')=p$ is that of a Galton--Watson forest of $p$ trees conditioned to have height $r$, i.e.
\[ \P \left( B_r^{r'} \! ( \boldsymbol{\tau}^0 )=(t_1,\dots, t_p) \big| Y(r,r')=p \right)=\frac{1}{ g^{\circ (r+1)} (0)^p - g^{\circ r} (0)^p} \prod_{i=1}^p \prod_{v \in t_i} \theta(c_v).\]
On the other hand, by using again the independence of $\{R \leq r'\}$ and $d_{r'}(\boldsymbol{\tau}^0)$, we have
\begin{eqnarray*}
\lim_{r' \to +\infty} \P \left(  R \leq r' \right) \P \left( Y(r,r') = p \right) &=& \lim_{r' \to +\infty} \P \left(  R \leq r', Y(r,r') = p \right)\\
&=& \lim_{r' \to +\infty} \P \left(  R \leq r', Y(r)=p \right)\\
&=& \P \left( Y(r)=p \right)\\
&=& \frac{\pi(p) m^{-r}}{\Pi(\theta(0))} \left( g^{\circ (r+1)} (0)^p - g^{\circ r} (0)^p \right)
\end{eqnarray*}
by Lemma \ref{distribY}, which proves the first part of the lemma.

Given the definition of $\boldsymbol{\tau}^1$, the proof of the second part is now easy.
For every forest $(t_1, \dots, t_p)$ of height $r$ with exactly one vertex at reverse height $0$, we have
\begin{eqnarray*}
\P \left( B_r ( \boldsymbol{\tau}^1 )=(t_1, \dots, t_p) \right) &=& \P \left( B_r ( \boldsymbol{\tau}^0 )=(t_1, \dots, t_p) \big| Y(0)=1 \right)\\
&=& \frac{\P \left( B_r ( \boldsymbol{\tau}^0 )=(t_1, \dots, t_p) \right)}{\P \left( Y(0)=1 \right)}.
\end{eqnarray*}
By Lemma \ref{distribY} we have $\P \left( Y(0)=1 \right)=\frac{\theta(0)}{\Pi(\theta(0))}$, so the second part of the lemma follows from the first one.
\end{proof}

We end up this subsection by showing that the strips $S^0_{\lambda}$ and $S^1_{\lambda}$ constructed from $\boldsymbol{\tau}^0_{\lambda}$ and $\boldsymbol{\tau}^1_{\lambda}$ are in some sense very close to each other. This will allow us in Section 4 to conclude a strip verifies a property if the other does, and therefore to avoid annoying case distinctions.

\begin{lem}\label{0almost1_strip}
Let $\lambda_n \to \lambda_c$ and let $(A_n)$ be measurable events. Then the following two assertions are equivalent:
\begin{itemize}
\item[(i)]
$\P \left( S^0_{\lambda_n} \in A_n \right) \xrightarrow[n \to +\infty]{} 0,$
\item[(ii)]
$\P \left( S^1_{\lambda_n} \in A_n \right) \xrightarrow[n \to +\infty]{} 0.$
\end{itemize}
\end{lem}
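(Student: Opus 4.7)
The plan is to exploit the fact that $\boldsymbol{\tau}^1_\lambda$ is precisely $\boldsymbol{\tau}^0_\lambda$ conditioned on $E = \{Y(0) = 1\}$. By Lemmas~\ref{distribY} and~\ref{Piexact}, $\P_\lambda(E) = \theta_\lambda(0)/\Pi_\lambda(\theta_\lambda(0)) = (1-h) \cdot \frac{2h}{1-\sqrt{1-4h}}$, which converges to $3/8$ as $\lambda \to \lambda_c$ (i.e.\ $h \to 1/4$). In particular, $\P_\lambda(E)$ stays bounded away from $0$ and $1$ uniformly in a neighborhood of $\lambda_c$, which will provide the quantitative control needed.

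The strategy is to compute the Radon--Nikodym derivative $\phi_\lambda = d\mathcal{L}(S^1_\lambda)/d\mathcal{L}(S^0_\lambda)$ and show that $\phi_\lambda$ and $\phi_\lambda^{-1}$ are suitably controlled uniformly for $\lambda$ near $\lambda_c$. Conditionally on the skeleton, both $S^0_\lambda$ and $S^1_\lambda$ are built using the same family of i.i.d.\ Boltzmann fillings with parameter $\lambda$, so the strip-level RN derivative equals the skeleton-level one. Using Lemma~\ref{distrib_ball_tau} together with the representation of $\boldsymbol{\tau}^{1,*}_\lambda$ as $\boldsymbol{\tau}^1_\lambda$ with its unique reverse-height-$0$ vertex removed and heights shifted, a direct bijective computation on finite balls $B_r$ should give $\phi_\lambda = C_\lambda \cdot Y(0)$, where $C_\lambda$ is a bounded factor depending only on $\theta_\lambda(0)$, $\theta_\lambda(1)$, $m_\lambda$, $\Pi_\lambda(\theta_\lambda(0))$ (all continuous at $\lambda_c$), and $Y(0) \geq 1$ is the number of reverse-height-$0$ vertices of the skeleton. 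Since $Y(0)$ is measurable with respect to the bottom of the skeleton, the identity extends to the full $\sigma$-algebra on strips by taking $r \to \infty$.

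From the explicit form $\phi_\lambda = C_\lambda \cdot Y(0)$, the direction (ii)~$\Rightarrow$~(i) follows immediately from $\phi_\lambda^{-1} \leq C_\lambda^{-1}$: one writes $\P(S^0_\lambda \in A) = \E_{S^1_\lambda}[\phi_\lambda^{-1}\,\mathbf{1}_A] \leq C_\lambda^{-1}\,\P(S^1_\lambda \in A)$. For the opposite direction (i)~$\Rightarrow$~(ii), the Cauchy--Schwarz inequality yields
\[\P(S^1_\lambda \in A) = \E_{S^0_\lambda}[\phi_\lambda\,\mathbf{1}_A] \leq C_\lambda\,\sqrt{\E_{S^0_\lambda}[Y(0)^2]}\,\sqrt{\P(S^0_\lambda \in A)}.\]
The second moment $\E_{S^0_\lambda}[Y(0)^2]$ is computable from Lemma~\ref{distribY} in terms of the first two derivatives of $\Pi_\lambda$ at the point $\theta_\lambda(0) = 1-h$. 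Since $1-h$ stays strictly bounded away from the singular point $1$ as $\lambda \to \lambda_c$, the explicit formula of Lemma~\ref{Piexact} shows that $\E_{S^0_\lambda}[Y(0)^2]$ remains finite and uniformly bounded in a neighborhood of $\lambda_c$, which delivers the required convergence to $0$.

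The main obstacle I expect is the careful bookkeeping in the explicit computation of $\phi_\lambda$: one must respect the conventions of the skeleton--strip bijection of Section~2.3 (in particular, which reverse-height-$0$ vertex of $\boldsymbol{\tau}^{1,*}_\lambda$ corresponds to the parent of the chopped vertex in $\boldsymbol{\tau}^1_\lambda$, since this need not coincide with the leftmost reverse-height-$0$ vertex after the shift) in order to arrive at the clean multiplicative structure $C_\lambda \cdot Y(0)$. A naive bijection misses the $Y(0)$ factor and produces a ratio that does not integrate to $1$; the factor arises precisely from summing over possible parent vertices in the reconstruction $\boldsymbol{\tau}^{1,*}_\lambda \leftrightarrow \boldsymbol{\tau}^1_\lambda$.
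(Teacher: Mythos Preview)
Your proposal is correct and follows the paper's approach in its essential structure: you identify the same Radon--Nikodym derivative $\phi_\lambda = C_\lambda \cdot Y(0)$ with $C_\lambda = \frac{\theta_\lambda(1)}{\theta_\lambda(0)}\frac{\Pi_\lambda(\theta_\lambda(0))}{m_\lambda}$, obtained by the same bijective bookkeeping between $\boldsymbol{\tau}^{1,*}$ and $\boldsymbol{\tau}^0$ (summing over the possible parent vertices), and the direction (ii)$\Rightarrow$(i) is handled identically via $Y(0)\geq 1$.

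The only genuine difference is in the direction (i)$\Rightarrow$(ii). The paper argues abstractly: since $\E_\lambda[Y(0)]$ is continuous at $\lambda_c$ and $Y_{\lambda_n}(0)\to Y_{\lambda_c}(0)$ in distribution, Skorokhod representation plus Scheff\'e's lemma yield $L^1$-convergence, hence uniform integrability of $(Y_{\lambda_n}(0))_n$, which is exactly what is needed to pass from $\P(S^0_{\lambda_n}\in A_n)\to 0$ to $\E[Y_{\lambda_n}(0)\mathbbm{1}_{A_n}]\to 0$. Your route is more hands-on: Cauchy--Schwarz reduces the question to a uniform bound on $\E_\lambda[Y(0)^2]$, which you read off from Lemma~\ref{distribY} as $\big(x\Pi'_\lambda(x)+x^2\Pi''_\lambda(x)\big)/\Pi_\lambda(x)$ at $x=\theta_\lambda(0)=1-h$, and then check finiteness and continuity at $h=\tfrac14$ from the explicit formula of Lemma~\ref{Piexact}. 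Your argument is slightly more elementary (no Skorokhod, no Scheff\'e) at the price of one extra explicit computation; the paper's argument is softer and would survive without the closed form for $\Pi_\lambda$, needing only continuity of $\E_\lambda[Y(0)]$.
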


\begin{proof}
We will show that the strips $S^0_{\lambda_n}$ and $S^1_{\lambda_n}$ are absolutely continuous with respect to each other, uniformly in $n$. For this, we recall that $\boldsymbol{\tau}^{1,*}$ is the tree $\boldsymbol{\tau}^1$ in which we have cut the only vertex at reverse height $0$, and we have shifted the reverse heights by $1$.
We will first prove that $\boldsymbol{\tau}^{1,*}$ has the same distribution as $\boldsymbol{\tau}^0$ biased by $Y(0)$, and then extend this absolute continuity relation to the strips.

Indeed, for any forest $f \in \F_{p,q,r}$, we have
\[\P \left( B_r(\boldsymbol{\tau}^{1,*})=f \right)=\sum_{v \in f \backslash f^*} \P \left( B_{r+1}(\boldsymbol{\tau}^1)=f^+_v \right), \]
where $f \backslash f^*$ is the set of vertices of $f$ at reverse height $0$, and $f^+_v$ is the forest of $\mathscr{F}_{1,q,r+1}$ obtained by adding a unique child to $v$ in $f$. By the second part of Lemma \ref{distrib_ball_tau}, we obtain
\[ \P \left( B_r(\boldsymbol{\tau}^{1,*})=f \right)= \frac{\theta(1)}{\theta(0)} p \pi(q) m^{-r-1} \prod_{v \in f} \theta(c_v). \]
Combined with the first part of Lemma \ref{distrib_ball_tau}, this yields
\begin{equation} \label{expectationY0}
\frac{\P \left( B_r(\boldsymbol{\tau}^{1,*})=f \right)}{\P \left( B_r(\boldsymbol{\tau}^0)=f \right)} = \frac{\theta(1)}{\theta(0)} \frac{\Pi \left( \theta(0) \right)}{m} p.
\end{equation}
In other words, the forest $B_r(\boldsymbol{\tau}^{1,*})$ has the distribution of $B_r(\boldsymbol{\tau}^0)$, biased by $Y(0)$. Since it is true for all $r$, we can conclude that $\boldsymbol{\tau}^{1,*}$ has the distribution of $\boldsymbol{\tau}^0$ biased by $Y(0)$. Since $S^1$ is constructed from $\boldsymbol{\tau}^{1,*}$ in the exact same way as $S^0$ is constructed from $\boldsymbol{\tau}^0$, we deduce that $S^1_{\lambda_n}$ has the same distribution as $S^0_{\lambda_n}$ biased by $Y_{\lambda_n}(0)$. It remains to prove that this absolute continuity is "uniform in $n$".

More precisely, if $(A_n)$ is a sequence of measurable events, for any $n \geq 0$, we have
\begin{equation}\label{use_absolute_continuity_trees}
\P \left( S^1_{\lambda_n} \in A_n \right)=\frac{\E \left[ Y_{\lambda_n}(0) \mathbbm{1}_{S^0_{\lambda_n} \in A_n} \right]}{\E \left[ Y_{\lambda_n}(0) \right]}.
\end{equation}
Moreover, \eqref{expectationY0} shows that
\[ \E \left[ Y_{\lambda}(0) \right]=\frac{\theta_{\lambda}(0)}{\theta_{\lambda}(1)} \frac{m_{\lambda}}{\Pi_{\lambda} \left( \theta_{\lambda}(0) \right)}=\frac{1-h}{2h(1-h)}\frac{1-2h-\sqrt{1-4h}}{1-\sqrt{1-4h}},\]
which is continuous at $\lambda=\lambda_c$. Therefore, the denominator in \eqref{use_absolute_continuity_trees} converges to a positive limit, so it is enough to prove that $\P \left( S^0_{\lambda_n} \in A_n \right)\to 0$ if and only if $\E \left[ Y_{\lambda_n}(0) \mathbbm{1}_{S^0_{\lambda_n} \in A_n} \right] \to 0$. The indirect implication is immediate since $Y_{\lambda_n}(0) \geq 1$. To prove the direct one, it is enough to check the variables $Y_{\lambda_n}(0)$ are uniformly integrable.  We know that they converge to $Y_{\lambda_c}(0)$ in distribution. By the Skorokhod representation theorem, we may assume the convergence is almost sure. Since we also have convergence of the expectations by \eqref{use_absolute_continuity_trees}, by Scheffé's Lemma we have $Y_{\lambda_n}(0) \to Y_{\lambda_c}(0)$ in $L^1$. In particular, the family $\left( Y_{\lambda_n}(0) \right)$ is uniformly integrable, which proves the direct implication and finally the lemma.
\end{proof}

\section{The Poisson boundary of \texorpdfstring{$\T_{\lambda}$}{Tlambda}}

The goal of this section is to prove Theorem \ref{thm2_Poisson}. We fix $0<\lambda<\lambda_c$ until the end of the section and omit the parameter $\lambda$ in most of the notation. 

\subsection{Construction of the geodesic boundary}

We start by defining precisely the compactification of $\T$ that we will afterwards prove to be a realization of its Poisson boundary. We recall that $\partial \mathbf{T}^g$ is the set of ends of the tree $\mathbf{T}^g$, i.e. the set of infinite self-avoiding paths from the root in $\mathbf{T}^g$. If $\gamma, \gamma' \in \partial \mathbf{T}^g$, we write $\gamma \sim \gamma'$ if $\gamma=\gamma'$ or if $\gamma$ and $\gamma'$ are the left and right boundaries of one of the copies of $S^0$ or $S^1$ (cf. left part of Figure \ref{TreeAndSlices}). Note that a.s., every ray of $\mathbf{T}^g$ branches infinitely many times, so no ray is equivalent to two distinct other rays. It follows that $\sim$ is a.s. an equivalence relation, for which countably many equivalence classes have cardinal $2$, and all the others have cardinal $1$.
We write $\widehat{\partial} \mathbf{T}^g=\partial \mathbf{T}^g / \sim$, i.e. we identify two geodesic rays if they are the left and right boundaries of the same strip. We also write $\gamma \to \widehat{\gamma}$ for the canonical projection from $\partial \mathbf{T}^g$ to $\widehat{\partial} \mathbf{T}^g$. If $S$ is one the copies of $S^0$ or $S^1$ appearing in the strip decomposition, then the two geodesics bounding $S$ correspond to the same point of $\widehat{\partial} \mathbf{T}^g$, that we denote by $\widehat{\gamma}_{S}$.

Our goal is now to define a topology on $\T \cup \widehat{\partial} \mathbf{T}^g$. It should be possible to define it by an explicit distance, but such a distance would be tedious to write down, so we prefer to give an "abstract" construction. Let $S$ and $S'$ be two distinct strips appearing in the strip decomposition of $\T$ (cf. left part of Figure \ref{TreeAndSlices}). Consider the smallest $r$ such that $S$ and $S'$ both intersect $B_r^{\bullet} \left( \T \right)$. Then $\T \backslash \left( B_r^{\bullet} \left( \T \right) \cup S \cup S' \right)$ has two connected components, that we denote by $\left( S, S' \right)$ and $\left( S', S \right)$ (the vertices on the geodesics bounding $S$ and $S'$ do not belong to $\left( S, S' \right)$ and $\left( S', S \right)$). We also write 
\[\partial_g \left( S, S' \right) =\{ \widehat{\gamma} | \mbox{ $\gamma$ is a ray of $\mathbf{T}^g$ such that $\gamma(i) \in \left( S, S' \right)$ for $i$ large enough}\}\]
We define $\partial_g \left( S', S \right)$ similarly. Note that $\partial_g \left( S, S' \right)$ and $\partial_g \left( S', S \right)$ are disjoint, and their union is $\widehat{\partial} \mathbf{T}^g \backslash \{\widehat{\gamma}_{S}, \widehat{\gamma}_{S'}\}$.

\begin{defn}\label{geodesic_compactification}
The \textit{geodesic compactification} of $\T$ is the set $\T \cup \widehat{\partial} \mathbf{T}^g$, equipped with the topology generated by
\begin{itemize}
\item
the singletons $\{x\}$, where $x$ is a vertex of $\T$,
\item
the sets $\left( S, S' \right) \cup \partial_g \left( S, S' \right)$, where $S$ and $S'$ are two distinct strips appearing in the strip decomposition of $\T$.
\end{itemize}
\end{defn}

This topology is separated (if $\widehat{\gamma}_1 \ne \widehat{\gamma}_2$, then there are two strips separating $\gamma_1$ and $\gamma_2$) and has a countable basis, so it is induced by a distance. Moreover, any open set of our basis intersects $\T$, so $\T$ is dense in $\T \cup \widehat{\partial} \mathbf{T}^g$. The end of this subsection is devoted to the proof of two very intuitive topological properties of the geodesic compactification. The second one states that the boundary $\widehat{\partial} \mathbf{T}^g$ is homeomorphic to the circle, which is natural since this is also the standard topology on the space of ends quotiented by our equivalence relation.

\begin{lem} \label{compactness}
The space $\T \cup \widehat{\partial} \mathbf{T}^g$ is compact.
\end{lem}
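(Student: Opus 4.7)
The plan is to use sequential compactness, which is enough here since the topology on $\T \cup \widehat{\partial} \mathbf{T}^g$ is metrizable (being second countable and Hausdorff, as noted just after Definition~\ref{geodesic_compactification}). Given an arbitrary sequence $(x_n)_{n \ge 1}$, the goal is to extract a convergent subsequence. After passing to a subsequence, one may assume the $x_n$ are pairwise distinct, as the other case yields a constant subsequence.

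The main idea is to use the tree $\mathbf{T}^g$ as a skeleton for the compactification and to exploit its local finiteness (which follows from the local finiteness of $\T$). To each $x_n$ I associate a ray $\gamma_n$ of $\mathbf{T}^g$ starting from $\rho$: if $x_n$ lies in a strip $S$, let $\gamma_n$ be, say, the left boundary ray of $S$; if $x_n$ is a vertex of $\mathbf{T}^g$, let $\gamma_n$ be any ray of $\mathbf{T}^g$ through $x_n$; and if $x_n = \widehat{\gamma}$, let $\gamma_n$ be any representative of the class. Because the balls $B_r(\mathbf{T}^g)$ are finite, a standard diagonal extraction yields a subsequence (still denoted $(x_n)$) along which, for every $r \geq 0$, the vertex $\gamma_n(r)$ is eventually equal to some $w_r \in \mathbf{T}^g$. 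Since $\mathbf{T}^g$ has no leaf, $\gamma_\infty := (w_r)_{r \geq 0}$ is an infinite ray, and I claim that $x_n \to \widehat{\gamma}_\infty$ in the geodesic compactification.

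To prove the claim, fix a basic open set $U$ containing $\widehat{\gamma}_\infty$. Since singletons of vertices do not contain boundary points, $U$ must be of the second kind in Definition~\ref{geodesic_compactification}, i.e.\ $U = (S,S') \cup \partial_g(S,S')$ for some distinct strips $S, S'$, with $\widehat{\gamma}_\infty \in \partial_g(S,S')$. Choose $I$ larger than the defining radius $r_0$ of $(S,S')$ and large enough that $w_I \in (S,S')$. For $n$ large, $\gamma_n(I) = w_I$, so the strip bounded on one side by $\gamma_n$ (hence containing $x_n$ if the latter is a vertex, or approached by $x_n$ if it is a boundary point) has $w_I$ on its bottom boundary. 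That strip, together with all other strips whose bottom boundaries pass through $w_I$, lies entirely in the subtree of $\T$ descending from $w_I$, which in turn is contained in the connected component $(S,S')$ of $\T \setminus (B_{r_0}^\bullet(\T) \cup S \cup S')$. After discarding the finitely many $n$ with $d(\rho, x_n) \leq I$, one concludes in both cases that $x_n \in U$.

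The main technical point lies in this last paragraph: one must check carefully that once the approximating ray $\gamma_n$ passes through $w_I \in (S,S')$, the point $x_n$ (possibly lying far above $w_I$ and on the boundary of some other strip, or on a boundary ray $\gamma_n$ that branches away from $\gamma_\infty$ after depth $I$) ends up in $(S,S') \cup \partial_g(S,S')$. This amounts to verifying that the entire ``fan'' of $\T$ hanging below $w_I$ in the strip decomposition sits in the sector $(S,S')$, which follows from planarity and from the fact that the strips $S$ and $S'$ separate $\T$ into exactly the two open sectors $(S,S')$ and $(S',S)$.
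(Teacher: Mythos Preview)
Your approach is genuinely different from the paper's. The paper first reduces to $x_n \in \T$ by density, then splits into two cases: if some strip $S$ contains infinitely many $x_n$, it declares $\widehat{\gamma}_S$ a subsequential limit; otherwise it builds the limit ray $\gamma$ inductively, choosing at each step a child $y$ of $\gamma(k)$ such that the slice $\S[y]$ still contains infinitely many $x_n$. Because the slices $\S[v]$ are essentially the basic open sets, the verification of convergence is immediate. You instead associate a boundary ray $\gamma_n$ to each $x_n$ and extract a pointwise limit ray; this is elegant and handles vertices and boundary points uniformly, but the last step needs more than what you wrote.

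The claim in your third paragraph that the strip $S_n$ ``lies entirely in the subtree of $\T$ descending from $w_I$'' is false when the root of $S_n$ lies at height below $I$: in that case the right boundary of $S_n$ sits at a vertex different from $w_I$ at height $I$, so $S_n \not\subset \S[w_I]$. This actually occurs in your subsequence whenever infinitely many $x_n$ lie in one fixed strip $S^*$---then $\gamma_n$ is constant equal to the left boundary of $S^*$, the root of $S_n=S^*$ is fixed, and for any $I$ above that root the containment fails. Your final paragraph phrases the remaining check as ``the fan below $w_I$ sits in $(S,S')$'', but that only gives $\S[w_I]\subset(S,S')$; the step that breaks is $x_n\in\S[w_I]$. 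The repair is easy: either separate out the case where one strip contains infinitely many $x_n$ (as the paper does), after which the roots of the remaining $S_n$ go to infinity and your argument applies verbatim; or argue directly that $S_n\setminus B_{r_0}^\bullet(\T)$ is connected, disjoint from $S\cup S'$, and can be joined to $w_I$ inside $\T\setminus(B_{r_0}^\bullet(\T)\cup S\cup S')$ along $\gamma_n$ at large heights, hence lies in $(S,S')$.
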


\begin{proof}
Let $(x_n)$ be a sequence with values in $\T \cup \widehat{\partial} \mathbf{T}^g$. We first assume $x_n \in \T$ for every $n$. We may assume $d(\rho, x_n) \to +\infty$. If there is a strip $S$ that contains infinitely many $x_n$, then $\widehat{\gamma}_{S}$ is a subsequential limit of $(x_n)$. We now assume it is not the case. We recall that for every vertex $v$ of $\mathbf{T}^g$, the slice $\S[v]$ is the part of $\T$ lying between the leftmost and the rightmost rays passing through $v$, above $v$ (see Figure \ref{TreeAndSlices}).
We will construct a ray $\gamma$ of $\mathbf{T}^g$ step by step, in such a way that for every $k \geq 0$, there are infinitely many points $x_n$ in $\S [\gamma(k)]$.

Assume we have already built $\gamma(0), \dots, \gamma(k)$. If $\gamma(k)$ has only one child in $\mathbf{T}^g$, then $\gamma(k+1)$ must be this child. If $\gamma(k)$ has $d \geq 2$ children, we call them $y_1, \dots, y_d$. Then $\S [\gamma(k)]$ is the union of the slices $\S[y_i]$ for $1 \leq i \leq d$ and of the $d-1$ strips whose lowest vertex is $\gamma(k)$. We know that $\S [\gamma(k)]$ contains infinitely many of the vertices $x_n$, but the $d-1$ strips contain finitely many of them. Therefore, there is an index $1 \leq i_0 \leq d$ such that $\S[y_{i_0}]$ contains infinitely many of them. We choose $\gamma(k+1)=y_{i_0}$.
We can check that the class $\widehat{\gamma}$ of the geodesic we built is a subsequential limit of $(x_n)$, which concludes the case where $x_n \in \T$ for every $n$.

Finally, let $\delta$ be a distance on $\T \cup \widehat{\partial} \mathbf{T}^g$ that generates its topology, and let $(x_n)$ be any sequence in $\T \cup \widehat{\partial} \mathbf{T}^g$. If $x_n \in \widehat{\partial} \mathbf{T}^g$, let $y_n \in \T$ be such that $\delta(x_n, y_n) \leq \frac{1}{n}$ (it exists by density). If $x_n \in \T$, we take $y_n=x_n$. By the first case $(y_n)$ has a subsequential limit, so $(x_n)$ also has one.
\end{proof}

\begin{lem}
The boundary $\widehat{\partial} \mathbf{T}^g$ is homeomorphic to the circle.
\end{lem}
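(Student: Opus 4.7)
The plan is to construct a continuous bijection $\pi : \mathbb{S}^1 \to \widehat{\partial} \mathbf{T}^g$. Since $\mathbb{S}^1$ is compact and $\widehat{\partial} \mathbf{T}^g$ is Hausdorff and compact (it is closed in the space of Lemma \ref{compactness}), this will automatically force $\pi$ to be a homeomorphism.

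The planar structure of $\T$ induces a cyclic order on the edges at each vertex of $\mathbf{T}^g$, and hence a natural cyclic order on $\partial \mathbf{T}^g$ defined as follows: three distinct rays $\gamma_1, \gamma_2, \gamma_3$ inherit their cyclic order from the cyclic order of $\gamma_1(r), \gamma_2(r), \gamma_3(r)$ in $\partial B_r(\mathbf{T}^g)$ for any $r$ large enough that these three vertices are distinct. By a standard recursive construction — assigning to the subtree of descendants of each vertex of $\mathbf{T}^g$ a closed sub-arc of $\mathbb{S}^1$, which is in turn partitioned among its children in their cyclic order, leaving a small open gap between each pair of consecutive children — one builds an order-preserving injection $\iota : \partial \mathbf{T}^g \hookrightarrow \mathbb{S}^1$. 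Its image $K = \iota(\partial \mathbf{T}^g)$ is compact, totally disconnected, and has no isolated points (since almost surely every ray of $\mathbf{T}^g$ branches infinitely often, by supercriticality of $\mu_\lambda$), hence is a Cantor set. The complement $\mathbb{S}^1 \setminus K$ is then a countable disjoint union of open arcs, which I call the \emph{gaps}, each with two distinct endpoints in $K$.

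The crucial combinatorial step is that these gaps are in natural bijection with the strips of Theorem \ref{thm1_GW}: two rays of $\mathbf{T}^g$ are cyclically adjacent if and only if, after their last common vertex, they go to two consecutive children and then follow the rightmost (resp.\ leftmost) path in these two subtrees; inspecting the strip decomposition, this is precisely the condition that they bound a single common strip, i.e.\ $\gamma \sim \gamma'$. Hence the two endpoints of each gap form a $\sim$-equivalence class of cardinal $2$, while all other points of $K$ are singleton classes. I then define $\pi : \mathbb{S}^1 \to \widehat{\partial} \mathbf{T}^g$ by $\pi(x) = \widehat{\iota^{-1}(x)}$ for $x \in K$, and by sending each closed gap entirely to the common class of its two endpoints; by construction this is a bijection.

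For continuity of $\pi$, any basic open set of $\widehat{\partial} \mathbf{T}^g$ is (by Definition \ref{geodesic_compactification}) of the form $\partial_g(S, S')$ for two distinct strips; its preimage under $\pi$ is one of the two open sub-arcs of $\mathbb{S}^1$ cut out by the closed gaps associated to $S$ and $S'$, and hence is open. The main obstacle is justifying the Cantor-set construction carefully — in particular the claim that $K$ has empty interior in $\mathbb{S}^1$, equivalently that the gaps are dense, equivalently that every point of $\partial \mathbf{T}^g$ has cyclically adjacent rays arbitrarily close to it. This reduces to showing that almost surely every ray of $\mathbf{T}^g$ has infinitely many branching vertices, each giving rise to a new strip bordering the ray, which follows from the supercriticality of the offspring distribution $\mu_\lambda$ and standard Galton--Watson arguments.
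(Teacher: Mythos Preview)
Your overall strategy is sound and closely related to the paper's, but there is a genuine error: the map $\pi:\mathbb{S}^1\to\widehat{\partial}\mathbf{T}^g$ you define is \emph{not} a bijection. By your own description, each closed gap --- a nondegenerate closed arc of $\mathbb{S}^1$ --- is sent entirely to a single equivalence class, so $\pi$ collapses uncountably many points to one and is only surjective. Consequently the ``continuous bijection from compact to Hausdorff'' argument in your first paragraph does not apply as stated. The fix is standard but must be made explicit: the quotient of $\mathbb{S}^1$ obtained by collapsing each of a countable family of pairwise disjoint closed subarcs (none equal to all of $\mathbb{S}^1$) is again homeomorphic to $\mathbb{S}^1$ --- this is the circle version of the Cantor--Lebesgue devil's staircase. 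Once you invoke that, your $\pi$ factors as $\mathbb{S}^1 \twoheadrightarrow \mathbb{S}^1/{\sim_{\text{gaps}}} \cong \mathbb{S}^1 \xrightarrow{\bar\pi} \widehat{\partial}\mathbf{T}^g$, where $\bar\pi$ is now a genuine continuous bijection between compact Hausdorff spaces, hence a homeomorphism.

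By contrast, the paper bypasses the Cantor-set embedding entirely and builds the staircase map in one step: it defines $\Psi:\partial\mathbf{T}^g\to\R/\Z$ by the explicit mixed-radix expansion $\Psi(\gamma)=\sum_{k\ge 0} j_\gamma(k)\big/\prod_{i=0}^k c_\gamma(i)$, where $c_\gamma(i)$ is the number of children of $\gamma(i)$ and $j_\gamma(i)$ the index of $\gamma(i+1)$ among them. This formula already identifies $\sim$-equivalent rays (for the same reason that $0.0111\ldots=0.1000\ldots$ in binary), so it descends directly to a map $\widehat{\partial}\mathbf{T}^g\to\R/\Z$ which one then checks is a homeomorphism. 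Your two-step approach (embed as a Cantor set, then collapse gaps) is conceptually cleaner about \emph{why} adjacent rays correspond exactly to strips, while the paper's one-step formula is shorter but leaves that correspondence implicit in the arithmetic of the expansion.
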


\begin{proof}
We build an explicit homeomorphism. Consider a ray $\gamma$ of $\mathbf{T}^g$. For every $i \geq 0$, let $c_{\gamma}(i)$ be the number of children of $\gamma(i)$ in $\mathbf{T}^g$. We denote these children by $x_0, \dots, x_{c_{\gamma}(i)-1}$ and we denote by $j_{\gamma}(i)$ the index $j \in \{0,1,\dots,c_{\gamma}(i)-1\}$ such that $\gamma(i+1)=x_j$. We also define
\[
\begin{array}{rcl}
\Psi :  \partial \mathbf{T}^g & \longrightarrow & \R / \Z\\
 \gamma & \longrightarrow & \sum_{k \geq 0} \frac{j_{\gamma}(k)}{\prod_{i=0}^k c_{\gamma}(i)} \pmod{1}.
\end{array}
\]
If $\gamma_{\ell}$ and $\gamma_r$ are the left and right boundaries of a copy of $S^0$, then there is $i_0$ such that:
\begin{itemize}
\item
for $i<i_0$, we have $c_{\gamma_r}(i)=c_{\gamma_{\ell}}(i)$ and $j_{\gamma_r}(i)=j_{\gamma_{\ell}}(i)$,
\item
we have $c_{\gamma_r}(i_0)=c_{\gamma_{\ell}}(i_0)$ and $j_{\gamma_r}(i_0)=j_{\gamma_{\ell}}(i_0)+1$,
\item
for $i>i_0$, we have $j_{\gamma_{\ell}}(i)=c_{\gamma_{\ell}}(i)-1$ and $j_{\gamma_r}(i)=0$,
\end{itemize}
which implies $\Psi(\gamma_{\ell})=\Psi(\gamma_r)$. Moreover, if $\gamma_{\ell}$ and $\gamma_r$ are respectively the leftmost and rightmost rays of $\mathbf{T}^g$, then $\Psi(\gamma_{\ell})=\Psi(\gamma_r)$ (the sum is equal to $0$ for $\gamma_{\ell}$ and to $1$ for $\gamma_r$). Hence, we have $\Psi(\gamma)=\Psi(\gamma')$ as soon as $\gamma \sim \gamma'$, so $\Psi$ defines an application from $\widehat{\partial} \mathbf{T}^g$ to $\R / \Z$. The verification that this application is a homeomorphism is easy and left to the reader.
\end{proof}

\subsection{Proof of Theorem \ref{thm2_Poisson}}

We first argue why the second point of Theorem \ref{thm2_Poisson} is an easy consequence of the first one. Assume the first point is proved. Since $\widehat{\partial} \mathbf{T}^g$ is homeomorphic to the circle, we can embed $\T \cup \widehat{\partial} \mathbf{T}^g$ in the unit disk $\mathbb{D}$ in such a way that $\widehat{\partial} \mathbf{T}^g$ is sent to $\partial \mathbb{D}$ (we do not describe the embedding explicitly). In this embedding, the simple random walk converges to a point of $\partial \mathbb{D}$ and the law of the limit point is a.s. non-atomic. Therefore, by Theorem 1.3 of \cite{HP17}, $\widehat{\partial} \mathbf{T}^g$ is a realization of the Poisson boundary of $\T$.

The idea of the proof of the first point is to first show that two independent random walks are quite well separated in terms of $\widehat{\partial} \mathbf{T}^g$. Let $X^1$ and $X^2$ be two independent simple random walks started from $\rho$. By Proposition 11 of \cite{CurPSHIT} we know that $\T$ is transient and does not have the intersection property (although \cite{CurPSHIT} deals with type-II triangulations, all the arguments of the proof still hold in our case). Hence, the complement of $\{ X^1_n | n \in \N \} \cup \{X^2_n | n \in \N\}$ has two infinite connected components with infinite boundaries. We denote them by $\left[ X^1, X^2 \right]$ and $\left[ X^2, X^1 \right]$. By point (ii) of Lemma \ref{halfplane_peeling} applied to the peeling along $X_1$ and $X_2$ (see also Section 3.2 of \cite{CurPSHIT} for a similar argument), the halfplanar triangulations $\left[ X^1, X^2 \right]$ and $\left[ X^2, X^1 \right]$ are independent copies of $\H=\H_{\lambda}$.

Therefore, we will need to study geodesics in halfplanar triangulations.
\begin{defn}
Let $H$ be a halfplanar triangulation and $\partial H$ its boundary. An \emph{infinite geodesic away from the boundary} is a sequence $(\gamma(n))_{n \geq 0}$ of vertices of $H$ such that for any $n \geq 0$:
\begin{itemize}
\item[(i)]
the vertices $\gamma(n)$ and $\gamma(n+1)$ are neighbours,
\item[(ii)]
we have $d(\gamma(n), \partial H)=n$.
\end{itemize}
\end{defn}

\begin{rem}
Contrary to geodesics away from a point, the existence of such geodesics is not obvious. For example, they do not exist in the UIHPT $\H_{\lambda_c}$.
\end{rem}

\begin{lem}\label{halfplane}
Almost surely, there is a geodesic away from the boundary in $\mathbb{H}_{\lambda}$.
\end{lem}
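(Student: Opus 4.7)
The plan is to transfer the existence of infinite leftmost geodesic rays in $\T_\lambda$ (Theorem \ref{thm1_GW}) to the halfplanar setting via the coupling in Lemma \ref{halfplane_peeling}(ii), and then extract the desired geodesic by a compactness argument together with a $0$--$1$ law.

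First, I realize a copy of $\mathbb{H}_\lambda$ inside $\T_\lambda$ as follows. Apply a filled-in peeling of $\T_\lambda$ that uncovers the slice $\S_\lambda$ bounded by the paths $\gamma_\ell$ and $\gamma_r$ from Theorem \ref{thm1_GW}; by Lemma \ref{halfplane_peeling}(ii), the unexplored region $H$ on the opposite side of $\gamma_\ell$ is distributed as $\mathbb{H}_\lambda$, with $\partial H = \gamma_\ell$. By Theorem \ref{thm1_GW}, the tree $\mathbf{T}^g_\lambda$ is a supercritical Galton--Watson tree (the mean of $\mu_\lambda$ is $1/m_\lambda>1$ since $\lambda<\lambda_c$), so with positive probability there exist leftmost geodesic rays $\gamma$ branching off from $\gamma_\ell$ into $H$ and themselves branching infinitely. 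For such a $\gamma$, the strip decomposition shows that $\gamma(n)$ is separated from $\gamma_\ell$ by a number of $S^0_\lambda$-strips growing with $n$, each of positive graph-distance width; hence $d_{\T_\lambda}(\gamma(n), \gamma_\ell) \to \infty$. Since $\partial H=\gamma_\ell$ separates $H$ from its complement in $\T_\lambda$, shortest paths from $\gamma(n)\in H$ to $\gamma_\ell$ stay in $H$, so $d_H(\gamma(n), \partial H) = d_{\T_\lambda}(\gamma(n),\gamma_\ell) \to \infty$. Thus, with positive probability, $\mathbb{H}_\lambda$ contains vertices at arbitrarily large distance from its boundary.

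Given unbounded depth, I construct the geodesic by compactness. For each $k\geq 1$, pick $v_k \in \mathbb{H}_\lambda$ with $d(v_k, \partial \mathbb{H}_\lambda) \geq k$, and let $\sigma_k$ be a leftmost geodesic from $v_k$ to $\partial \mathbb{H}_\lambda$, parametrized so that $\sigma_k(j)$ lies at distance $j$ from $\partial \mathbb{H}_\lambda$ for $j = 0, 1, \ldots, k$. Since $\mathbb{H}_\lambda$ is locally finite, a standard diagonal extraction (K\"onig's lemma) yields a subsequence of $(\sigma_k)$ converging pointwise to an infinite path $\sigma$ with $d(\sigma(j), \partial \mathbb{H}_\lambda) = j$ for every $j \geq 0$, which is the desired geodesic away from the boundary. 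To upgrade from positive probability to almost sure existence, I note that the event \textit{$\mathbb{H}_\lambda$ contains a geodesic away from the boundary} is invariant under the translation of the root edge along $\partial \mathbb{H}_\lambda$; since the law of $\mathbb{H}_\lambda$ is stationary and ergodic under this translation (by the characterization in Proposition 1), the event has probability $0$ or $1$, hence $1$.

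The main obstacle is the metric estimate $d_{\T_\lambda}(\gamma(n), \gamma_\ell) \to \infty$ in the first step. Making it precise requires a quantitative lower bound on the graph width of an individual strip $S^0_\lambda$ together with a lower bound on the number of strips separating two diverging rays of $\mathbf{T}^g_\lambda$ at distance $n$ from the root. Both are accessible via the skeleton decomposition of Section 2 -- the width is controlled by the subcriticality of the offspring distribution $\theta_\lambda$ governing the reverse trees, and the number of strips by the supercriticality of $\mu_\lambda$ -- but this is the delicate geometric step of the argument.
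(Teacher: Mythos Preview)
Your coupling step does not work as written. Lemma~\ref{halfplane_peeling}(ii) applies only to peeling algorithms that choose the next edge from the already-discovered region, but $\gamma_\ell$, $\gamma_r$ and $\S_\lambda$ are defined via the \emph{infinite} leftmost geodesic rays, which are global objects invisible to any finite exploration; there is no admissible peeling whose discovered region is $\S_\lambda$. The geometry is also inverted: if the discovered region were $\S_\lambda$, the unexplored complement would be the single strip $S^1_\lambda$, whose boundary is $\gamma_\ell\cup\gamma_r$ (not $\gamma_\ell$ alone) and which by Definition~\ref{defn_strip} contains \emph{no} leftmost geodesic rays in its interior---so there is no $\gamma$ to branch off into $H$. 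Beyond this, the metric step you call ``delicate'' is essentially the whole content of the lemma: converting ``many strips between $\gamma(n)$ and $\gamma_\ell$'' into $d(\gamma(n),\gamma_\ell)\to\infty$ needs a uniform lower bound on strip width along arbitrary paths, which Section~2 does not supply. (A smaller point: your K\"onig extraction needs the feet $\sigma_k(0)\in\partial\mathbb H_\lambda$ to remain in a bounded set, which is not arranged; unbounded depth alone does not give a geodesic away from the boundary in a general halfplane.)

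The paper's route avoids all of this by using part~(i) of Lemma~\ref{halfplane_peeling} instead of part~(ii): $\mathbb H_\lambda$ is the local limit of $\T_\lambda\setminus B_r^\bullet(\T_\lambda)$ rooted at a \emph{uniform} boundary edge. The key input is then purely probabilistic: writing $L_r=|\partial B_r^\bullet\cap\mathbf T^g_\lambda|$ and $P_r=|\partial B_r^\bullet|$, both $m_\lambda^r L_r$ and $m_\lambda^r P_r$ converge a.s.\ to positive limits (Kesten--Stigum and~\cite{CurPSHIT} respectively), so a uniform boundary vertex lies on $\mathbf T^g_\lambda$ with probability at least some $c>0$. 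Passing to the local limit, for every $s$ one gets $\P\big(\exists x\in\mathbb H_\lambda:\ d(x,\rho)=d(x,\partial\mathbb H_\lambda)=s\big)\ge c/2$. These events decrease in $s$, so with probability $\ge c/2$ they hold for all $s$, and K\"onig at the \emph{fixed} vertex $\rho$ produces the geodesic. Your final ergodicity step then applies verbatim. The upshot is that no width-of-strip estimate is needed at all; the comparison of $L_r$ and $P_r$ replaces it.
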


\begin{proof}
We write $P_r=|\partial B_r^{\bullet}(\T)|$ and $L_r=|\partial B_r^{\bullet}(\T) \cap \mathbf{T}^g|$. We recall that $\mathbf{T}^g$ is a Galton--Watson tree with offspring distribution $\mu$ given by Theorem \ref{thm1_GW}. In particular, we have $\sum_{i \geq 0} i \mu(i)=m_{\lambda}^{-1}$ and $\sum_{i \geq 0} (i \log i)  \mu(i)<+\infty$, so by the Kesten--Stigum Theorem $m_{\lambda}^r L_r$ converges a.s. to a positive random variable. On the other hand, as in Section 2 of \cite{CurPSHIT}, it holds that $m_{\lambda}^r P_r$ converges a.s. to a positive random variable (\cite{CurPSHIT} only deals with type-II triangulations but all the 	arguments of the proof still work in the type-I case). Hence, there is a constant $c>0$ such that, for $r$ large enough,
\[ \mathbb{P} \left( L_r \geq c P_r \right) \geq \frac{1}{2}.\]
Therefore, if $z_r$ is a random vertex chosen uniformly on $\partial B_r^{\bullet}(\T)$, for $r$ large enough we have $\mathbb{P} \left( z_r \in \mathbf{T}^g \right) \geq \frac{c}{2}$. Hence, for any $s>0$, with probability at least $\frac{c}{2}$, there is a point $x \in \partial B_{r+s}^{\bullet}(\T)$ at distance exactly $s$ from $z_r$.

But $\mathbb{H}$ is the local limit as $r \to +\infty$ of $\T \backslash B_r^{\bullet}(\T)$ (rooted at a uniform edge on its boundary), so if $\rho$ denotes the root vertex of $\mathbb{H}$, for any $s \geq 0$, we have
\[ \mathbb{P} \left( \mbox{there is $x \in \mathbb{H}$ such that $d(x, \rho)=d(x, \partial \mathbb{H})=s$} \right) \geq \frac{c}{2}.\]
This event is nonincreasing in $s$, so with probability at least $\frac{c}{2}$ it occurs for every $s$. By a compactness argument, with probability at least $\frac{c}{2}$, there is an infinite geodesic away from the boundary $\gamma$ with $\gamma(0)=\rho$. Finally, we claim that $\H$ is invariant under root translation, and that the root translation is ergodic, which is enough to conclude. Indeed, by local limit, the invariance is a consequence of the invariance of $\T^p$ for every $p$ under re-rooting along the boundary. The ergodicity is proved in the type-II case in \cite{AR13} (this is Proposition 1.3), and the proof adapts well here.
\end{proof}

We can now show that $X^1$ and $X^2$ are a.s. separated by an infinite leftmost geodesic.

\begin{lem}\label{gamma-X-gamma-X}
Almost surely, there is a ray $\widehat{\gamma}$ of $\mathbf{T}^g$ such that for $n$ large enough, we have $\widehat{\gamma}(n) \in \left[ X^1, X^2 \right]$, and the same is true for $\left[ X^2, X^1 \right]$.
\end{lem}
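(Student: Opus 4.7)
The plan is to upgrade the infinite geodesic provided by Lemma \ref{halfplane} inside the halfplane $\H := [X^1, X^2]$ into a leftmost geodesic ray of $\T$ that eventually lives in $[X^1, X^2]$; the claim for $[X^2, X^1]$ is symmetric, so I treat only this side. Since $\H$ is distributed as $\H_\lambda$, Lemma \ref{halfplane} gives, almost surely, an infinite geodesic $\alpha$ in $\H$ away from $\partial \H$, meaning $\alpha(0) \in \partial \H \subset X^1 \cup X^2$ and $d_\H(\alpha(n), \partial \H) = n$ for every $n \geq 0$.

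The first step is the two-sided estimate $n \leq d_\T(\rho, \alpha(n)) \leq n + C$, with $C := d_\T(\rho, \alpha(0))$. The upper bound follows by concatenating a $\T$-geodesic from $\rho$ to $\alpha(0)$ with $\alpha$ itself. For the lower bound, any $\T$-path from $\rho$ to $\alpha(n)$ admits a last vertex $w$ on $\partial \H$; after $w$ the path must remain in the closed halfplane $\bar \H = \H \cup \partial \H$, since $\bar \H$ is one of the two components of $\T$ obtained by cutting along $\partial \H$ and the path ends at $\alpha(n) \in \bar \H$. The suffix from $w$ to $\alpha(n)$ therefore has length at least $d_{\bar \H}(w, \alpha(n)) \geq d_{\bar \H}(\alpha(n), \partial \H) = n$.

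Next I would let $\beta_n$ be the leftmost geodesic in $\T$ from $\rho$ to $\alpha(n)$, and let $w_n$ be its last vertex on $\partial \H$. Applying the previous lower bound to $\beta_n$ yields $d_\T(\rho, w_n) \leq d_\T(\rho, \alpha(n)) - n \leq C$, so the $w_n$ all lie in the finite ball of radius $C$ around $\rho$, and after the step at which $\beta_n$ reaches $w_n$ the path is confined to the interior of $[X^1, X^2]$. By local finiteness of $\T$ and a diagonal extraction, some subsequence of $(\beta_n)$ converges pointwise to an infinite path $\beta$ from $\rho$. For every $k > C$, eventually $\beta(k) = \beta_n(k) \in [X^1, X^2]$, so $\widehat \gamma := \beta$ will be the desired ray, provided $\beta$ indeed belongs to $\mathbf{T}^g$.

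The main point requiring care is exactly this last provision: that the pointwise limit of leftmost geodesics with endpoints escaping to infinity is itself a leftmost geodesic ray. This rests on two stability properties of leftmost geodesics, both easy to check from the definition—uniqueness between any two vertices (stated in the paper), and the fact that the prefix of a leftmost geodesic from $\rho$ to $y$ is again the leftmost geodesic from $\rho$ to the intermediate endpoint. Once these are in hand, the pointwise convergence $\beta_n(k) \to \beta(k)$ combined with uniqueness forces the length-$k$ prefix of $\beta$ to coincide with the leftmost geodesic from $\rho$ to $\beta(k)$, so $\beta$ is a leftmost geodesic ray and belongs to $\mathbf{T}^g$. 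The same argument applied to $[X^2, X^1]$ finishes the proof.
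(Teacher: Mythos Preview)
Your proof is correct and follows essentially the same approach as the paper's: take the geodesic away from $\partial H$ supplied by Lemma \ref{halfplane}, draw leftmost geodesics from $\rho$ to its points, show via a distance estimate that these geodesics lie in $H=[X^1,X^2]$ beyond index $C=d_\T(\rho,\alpha(0))$, and extract a subsequential limit which is a leftmost geodesic ray. The only cosmetic difference is that you phrase the distance estimate through the last boundary-crossing vertex $w_n$ and the bound $d_\T(\rho,w_n)\leq C$, whereas the paper argues directly that $d(\alpha(n),\beta_n(i))\leq n=d(\alpha(n),\partial H)$ for $i\geq C$; these are two formulations of the same inequality.
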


\begin{proof}
In this proof, we will write $H=\left[ X^1, X^2 \right]$ to avoid too heavy notations. Let $\gamma$ be an infinite geodesic away from the boundary in $H$, which exists by Lemma \ref{halfplane}. For $n \geq 0$, let $\left( \gamma_n(i) \right)_{0 \leq i \leq d(\rho, \gamma(n))}$ be the leftmost geodesic from $\rho$ to $\gamma(n)$ (cf. Figure \ref{figure_geodesic_hp}). For $n \geq i \geq d(\rho, \gamma(0))$, we have $i \leq n =d(\gamma(n), \partial H) \leq d(\gamma(n), \rho)$ so $\gamma_n(i)$ is well defined, and
\[\begin{array}{lll}
d(\gamma(n), \gamma_n(i))&= d(\rho, \gamma(n))-i & \mbox{(since $\gamma$ is a geodesic)}\\
& \leq n+d(\rho, \gamma(0))-i & \mbox{(by the triangular inequality)}\\
& \leq n &\\
&= d(\gamma(n), \partial H), &
\end{array}\]
so $\gamma_n(i) \in H$. But by a compactness argument, there is an infinite path $\widetilde{\gamma}$ in $\T$ such that for any $i$, there are infinitely many $n$ such that $\widetilde{\gamma}(i)=\gamma_n(i)$. It easy to check that $\widetilde{\gamma}$ is an infinite leftmost geodesic in $\T$. Moreover, since $\gamma_n(i) \in H$ for $n \geq i \geq d(\rho, \gamma(0))$, we have $\widetilde{\gamma}(i) \in H$ for $i$ large enough, so there is an infinite leftmost geodesic of $\T$ that lies in $H$ eventually.
\end{proof}

\begin{figure}
\begin{center}
\begin{tikzpicture}
\draw(0,0)--(6,0);
\draw[thick, red] (3,0) to[out=90,in=300] (2.5,1.5);
\draw[thick, red] (2.5,1.5) to[out=120,in=270] (3.5,4.5);
\draw[thick, blue] (2.5,-1) to[bend left] (2.5,1.5);
\draw(3,0) node{};
\draw(2.5,-1) node{};
\draw(2.5,1.5) node{};

\draw(2.8,-1) node[texte]{$\rho$};
\draw(5.5,0.3) node[texte]{$\partial H$};
\draw[red] (3.4,0.3) node[texte]{$\gamma(0)$};
\draw[red] (3,1.6) node[texte]{$\gamma(n)$};
\draw[red] (3.5,3.5) node[texte]{$\gamma$};
\draw[blue] (1.9,0.5) node[texte]{$\gamma_n$};
\end{tikzpicture}
\end{center}
\caption{From an infinite geodesic away from the boundary in $H$, we can build an infinite leftmost geodesic in $\T$, that lies in $H$ eventually.} \label{figure_geodesic_hp}
\end{figure}
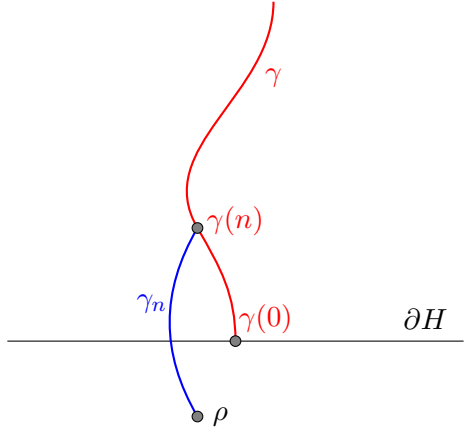

\begin{proof}[Proof of Theorem \ref{thm2_Poisson}]
We can now prove the convergence of the simple random walk to a point of $\widehat{\partial} \mathbf{T}^g$.
If $X$ is a simple random walk, we write $\mathrm{Acc}(X)$ for the set of accumulation points of $X$ on $\widehat{\partial} \mathbf{T}^g$. By Lemma \ref{compactness}, it is enough to prove that $\mathrm{Acc}(X)$ is reduced to a point.
We first claim that $\mathrm{Acc}(X)$ is a circle arc of $\widehat{\partial} \mathbf{T}^g$. Indeed, assume $\widehat{\gamma}_1 \ne \widehat{\gamma}_2$ are two points of $\mathrm{Acc}(X)$.
Then $\widehat{\partial} \mathbf{T}^g \backslash \{ \widehat{\gamma}_1, \widehat{\gamma}_2 \}$ has two connected components, that we denote by $\left( \widehat{\gamma}_1, \widehat{\gamma}_2 \right)$ and $\left( \widehat{\gamma}_2, \widehat{\gamma}_1 \right)$.
To oscillate between $\widehat{\gamma}_1$ and $\widehat{\gamma}_2$, the walk $X$ must intersect infinitely many times either all the $\gamma$ such that $\widehat{\gamma} \in \left( \widehat{\gamma}_1, \widehat{\gamma}_2 \right)$ or all the $\gamma$ such that $\widehat{\gamma} \in \left( \widehat{\gamma}_2, \widehat{\gamma}_1 \right)$ (see Figure \ref{acc_is_an_arc}). In both cases, $\mathrm{Acc}(X)$ contains one of the two arcs from $\widehat{\gamma}_1$ to $\widehat{\gamma}_2$. Hence, $\mathrm{Acc}(X)$ is closed and connected, so it is a circle arc.

\begin{figure}
\begin{center}
\begin{tikzpicture}

\draw[red, thick] (0,0)--(0,0.5);
\draw[red, thick] (0,0.5)--(60:1);
\draw[red, thick] (60:1)--(30:3);
\draw[red, thick] (60:1)--(100:3);
\draw[red, thick] (0,0.5)--(135:1);
\draw[red, thick] (135:1)--(150:3);
\draw[red, thick] (135:1)--(240:3);

\draw[orange, very thick] (0,0) to[out=270,in=270] (45:1); 
\draw[orange, very thick] (45:1) to[out=90,in=270] (150:1.5); 
\draw[orange, very thick] (150:1.5) to[out=90,in=180] (30:2);
\draw[orange, very thick] (30:2) to[out=0,in=270] (30:2.3);
\draw[orange, very thick] (30:2.3) to[out=90,in=0] (90:2);
\draw[orange, very thick] (90:2) to[out=180,in=0] (150:2.5);
\draw[orange, very thick] (150:2.5) to[out=180,in=270] (150:2.8);
\draw[orange, very thick] (150:2.8) to[out=90,in=210] (120:2.7);
\draw[orange, very thick] (120:2.7) to[out=30,in=120] (30:2.8);

\draw (0,0) circle(3cm);
\draw[blue, thick] (30:3) arc (30:150:3);
\draw[darkgreen, thick] (150:3) arc (150:390:3);

\draw (0,0) node{};
\draw (0,-0.3) node[texte]{$\rho$};
\draw[red] (30:3.3) node[texte]{$\gamma_1$};
\draw[red] (150:3.3) node[texte]{$\gamma_2$};
\draw[red] (240:3.3) node[texte]{$\gamma'$};
\draw[red] (100:3.3) node[texte]{$\gamma$};
\draw[orange] (30:1) node[texte]{$X$};
\draw[blue] (75:3.4) node[texte]{$\left( \widehat{\gamma}_1, \widehat{\gamma}_2 \right)$};
\draw[darkgreen] (310:3.6) node[texte]{$\left( \widehat{\gamma}_2, \widehat{\gamma}_1 \right)$};

\end{tikzpicture}
\end{center}
\caption{Proof that $\mathrm{Acc}(X)$ is an arc circle: assume $X$ oscillates between $\gamma_1$ and $\gamma_2$ and there is $\gamma' \in \left( \widehat{\gamma}_2, \widehat{\gamma}_1 \right)$ that $X$ intersects only finitely many times. Then $X$ intersects infinitely many times every $\gamma$ with $\widehat{\gamma} \in \left( \widehat{\gamma}_1, \widehat{\gamma}_2 \right)$.} \label{acc_is_an_arc}
\end{figure}
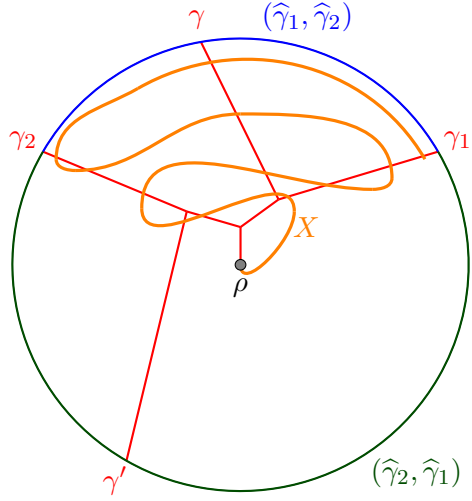

Now let $\nu$ be a probability measure with no atom and full support on $\widehat{\partial} \mathbf{T}^g$ (for example, one can consider the exit measure of the nonbacktracking random walk on $\mathbf{T}^g$). Either $\mathrm{Acc}(X)$ is a singleton, or it has positive measure, so it is enough to show that $\P \left( \nu \left( \mathrm{Acc}(X) \right)>0 \right)=0$. By Lemma \ref{gamma-X-gamma-X}, we know that if $X^1$ and $X^2$ are two independent simple random walks started from $\rho$, then there are two rays of $\mathbf{T}^g$ lying respectively in $\left[ X_1, X_2 \right]$ and $\left[ X_2, X_1 \right]$ eventually, so they separate $X^1$ and $X^2$. Therefore, $\mathrm{Acc}(X^1) \cap \mathrm{Acc}(X^2)$ contains at most two points, so $\nu \left( \mathrm{Acc}(X^1) \cap \mathrm{Acc}(X^2) \right)=0$ a.s.. Let $X^i$ for $i \in \N$ be infinitely many independent simple random walks started from $\rho$. We have
\[ \sum_{i \geq 0} \nu \left( \mathrm{Acc}(X^i) \right) = \nu \left( \bigcup_{i \geq 0} \mathrm{Acc}(X^i) \right) \leq 1. \]
But the $\nu \left( \mathrm{Acc}(X^i) \right)$ are i.i.d. (conditionally on $\T$), so they must be $0$ a.s.. Therefore,  $\mathrm{Acc}(X)$ cannot have positive measure so it is a.s. a point. 

Hence, the simple random walk a.s. converges to a point of $\widehat{\partial} \mathbf{T}^g$, so it defines an exit measure on $\widehat{\partial} \mathbf{T}^g$, that we denote by $\nu_{\partial}$.
We now prove that $\nu_{\partial}$ is nonatomic. Once again, our main tool is Lemma \ref{gamma-X-gamma-X}. Assume $\nu_{\partial}$ has an atom with positive probability, and let $(X^i)_{1 \leq i \leq 4}$ be four independent SRW started from $\rho$. For every $1 \leq i \leq 4$, let $X^i_{\infty}$ be the limit of $X^i$ in $\widehat{\partial} \mathbf{T}^g$. Then $\mathbb{P} \left( X^1_{\infty}=X^2_{\infty}=X^3_{\infty}=X^4_{\infty} \right)>0$. If this happens, we can assume (up to a factor $\frac{1}{24}$) that they lie in clockwise order, and that $\widehat{\partial} \mathbf{T}^g \backslash \{ X^1_{\infty} \}$ lies in the part between $X^4$ and $X^1$. By Lemma \ref{gamma-X-gamma-X}, there are at least one ray of $\mathbf{T}^g$ between $X^1$ and $X^2$, one between $X^2$ and $X^3$ and one between $X^3$ and $X^4$ (cf. Figure \ref{figure_nonatomic}). Hence, there are at least three rays of $\mathbf{T}^g$ in the part between $X^1$ and $X^4$ that does not contain $\widehat{\partial} \mathbf{T}^g \backslash \{ X^1_{\infty} \}$. In particular, two of them are not equivalent for $\sim$ (we recall that the equivalence classes have cardinal at most $2$). Hence, the part lying between $X^1$ and $X^4$ containing $X^2$ and $X^3$ (the left part on Figure \ref{figure_nonatomic}) also contains a slice of the form $\S[y]$ (see Figure \ref{TreeAndSlices}), so $X^1_{\infty} \ne X^4_{\infty}$. We get a contradiction.

\begin{figure}
\begin{center}
\begin{tikzpicture}
\fill[color=green!20] (115:3) to[out=310, in=100] (0,0) to[out=140, in=290] (125:3);
\fill[color=green!20] (155:3) to[out=350, in=140] (0,0) to[out=180, in=330] (165:3);
\fill[color=green!20] (195:3) to[out=30, in=180] (0,0) to[out=220, in=10] (205:3);
\fill[color=green!20] (235:3) to[out=70, in=220] (0,0) to[out=260, in=50] (245:3);

\draw (0,0) to[out=100, in=310] (115:3);
\draw (0,0) to[out=140, in=290] (125:3);
\draw (0,0) to[out=140, in=350] (155:3);
\draw (0,0) to[out=180, in=330] (165:3);
\draw (0,0) to[out=180, in=30] (195:3);
\draw (0,0) to[out=220, in=10] (205:3);
\draw (0,0) to[out=220, in=70] (235:3);
\draw (0,0) to[out=260, in=50] (245:3);

\draw[thick, red] (0,0) -- (160:0.5);
\draw[thick, red] (160:0.5) to[bend right=15] (140:3);
\draw[thick, red] (160:0.5) -- (180:1);
\draw[thick, red] (180:3) -- (180:1);
\draw[thick, red] (180:1) to[bend left] (220:3);

\draw (0,0) node{};
\draw (0.3,0) node[texte]{$\rho$};
\draw (-2,0) node{};
\draw (-2.2,0.2) node[texte]{$y$};
\draw (240:2) node[texte]{$X^1$};
\draw (200:2) node[texte]{$X^2$};
\draw (160:2) node[texte]{$X^3$};
\draw (120:2) node[texte]{$X^4$};
\end{tikzpicture}
\end{center}
\caption{Sketch of the proof that $\nu_{\partial}$ is nonatomic: if $X^1, \dots, X^4$ are four independent simple random walks, then there are three infinite leftmost geodesics (in red) between $X^1$ and $X^4$, so there is a slice $\S[y]$ there. The hulls of the four random walks are in green.}\label{figure_nonatomic}
\end{figure}
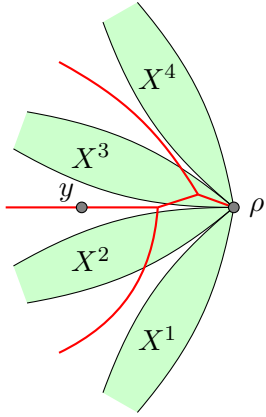

We finally show that $\nu_{\partial}$ has full support. Since $\nu_{\partial}$ is nonatomic, we have $\nu_{\partial} \left( \{ \widehat{\gamma}_{S^1} \} \right)=0$ a.s.. Hence, a.s., we have $X_n \in \S[\rho]=\S$ for $n$ large enough. Therefore, we also have
\[\P \left( \forall k \geq 0, X_k \in \S[\rho] | X_0=\rho \right)>0 \qquad \mathrm{a.s.}\]
Now fix $r>0$, condition on $B_r^{\bullet} \left( \T \right)$ and take $x \in \mathbf{T}^g$ such that $d(\rho,x)=r$. Since $\S[x]$ has the same distribution as $\S[\rho]$ (see Figure \ref{TreeAndSlices}), we have
\[\P \left( \forall k \geq 0, X_k \in \S[x] | X_0=x \right)>0 \qquad \mathrm{a.s.}\]
Hence, we have $\P \left( X_k \in \S[x] \mbox{ for $k$ large enough} | X_0=\rho \right)>0$ a.s. If this occurs, then $X_{\infty}$ is of the form $\widehat{\gamma}$, where $\gamma(k) \in \S[x]$ for $k$ large enough. Therefore, we have
\[\nu_{\partial} \left( \left\{ \widehat{\gamma} | \mbox{$\gamma$ lie in $\S[x]$ eventually} \right\} \right)>0 \qquad \mathrm{a.s.}\]
Almost surely, this holds for every $x \in \mathbf{T}^g$, which is enough to ensure that $\nu_{\partial}$ has full support. This ends the proof of Theorem \ref{thm2_Poisson}.
\end{proof}

\begin{rem}
We end this section with a remark about the Gromov boundary \cite{Gro81}, which is another natural notion of boundary for an infinite graph $G=(V,E)$. Let $\mathcal{C}(G)$ be the space of functions $f : V \to \R$ equipped with the product topology. We say that two functions of $\mathcal{C}(G)$ are equivalent if they are equal up to an additive constant. We quotient $\mathcal{C}(G)$ by this equivalence relation to obtain the quotient space $\mathcal{C}(G) / \mathbb{R}$. If $x \in V$, we define $f_x \in \mathcal{C}(G) / \mathbb{R}$ by $f_x (y)=d_G(x,y)$ for any $y \in V$. The \emph{Gromov compactification} $\widehat{G}$ of $G$ is the closure of $\{ f_x | x \in V\}$ in $\mathcal{C}(G) / \mathbb{R}$ and the \emph{Gromov boundary} $\partial_{Gr} G$ of $G$ is the set $\widehat{G} \backslash \{ f_x | x\in V\}$.

It is easy to show that for any geodesic ray $\gamma$, the sequence $f_{\gamma(n)}$ converges in $\mathcal{C}(G) / \mathbb{R}$, so it defines a point $f_{\gamma} \in \partial_{Gr} \T$. A natural question is to ask whether there is a natural correspondence between $\widehat{\partial} \mathbf{T}^g$ and $\partial_{Gr} \T$. The answer is no.

To prove it, we show that if $\gamma_1$ and $\gamma_2$ are respectively the left and right boundary of the same strip, then $f_{\gamma_1} \ne f_{\gamma_2}$. Indeed, let $n$ be such that $\gamma_1(n) \notin \gamma_2$. We have $f_{\gamma_1}(\gamma_1(n))-f_{\gamma_1}(\rho)=-n$ by definition of $f_{\gamma_1}$. But if we had $f_{\gamma_2}(\gamma_1(n))-f_{\gamma_2}(\rho)=-n$, this would mean that there is $m>n$ such that $d(\gamma_2(m), \gamma_1(n))-d(\gamma_2(m), \rho)=-n$, i.e. $d(\gamma_2(m), \gamma_1(n))=m-n$. Take such an $m$ minimal. Then by concatenating $\gamma_1$ from $\rho$ to $\gamma_1(n)$ and a geodesic from $\gamma_1(n)$ to $\gamma_2(m)$, we obtain a geodesic from $\rho$ to $\gamma_2(m)$ that lies strictly to the left of $\gamma_2$. This contradicts the fact that $\gamma_2$ is a leftmost geodesic in $\T$. This suggests that $\partial_{Gr} \T$ should not be homeomorphic to the circle, but rather to a Cantor set.
\end{rem}
%It also seems that $\partial_{Gr} \T$ is even larger than $\partial \mathbf{T}^g$. We have seen that if $\widehat{\gamma}$ is of the form $\widehat{\gamma}_{S}$, then there are two points of $\partial_{Gr} \T$ corresponding to $\widehat{\gamma}$. But the "directions" of the form $\widehat{\gamma}_{S}$ depend on the choice of the root, so a root change would give new directions corresponding to two points of $\partial_{Gr} \T$.

\section{The tree of infinite geodesics in the hyperbolic Brownian plane}

\subsection{The tree \texorpdfstring{$\mathbf{T}^g(\hp)$}{Tg(hp)}}

The goal of this section is to take the scaling limit of Theorem \ref{thm1_GW} and to prove Theorem \ref{thm3_hbp}. For all this section, we fix a sequence $(\lambda_n)$ of numbers in $(0, \lambda_c]$ such that $\lambda_n = \lambda_c \left( 1-\frac{2}{3n^4} \right) + o \left( \frac{1}{n^4} \right)$. We know, by the main result of \cite{B16}, that $\frac{1}{n} \T_{\lambda_n}$ converges for the local Gromov--Hausdorff distance to $\hp$. Therefore, it seems reasonable that $\mathbf{T}^g(\hp)$ should be the scaling limit of the trees $\mathbf{T}^g_{\lambda_n}$. This scaling limit is easy to describe. We recall that $\mathbf{B}$ is the infinite tree in which every vertex has exactly two children, except the root which has one. For $\alpha>0$, we denote by $\mathbf{Y}_{\alpha}$ the Yule tree of parameter $\alpha$, i.e. the tree $\mathbf{B}$ in which the lengths of the edges are i.i.d. exponential variables with parameter $\alpha$.

\begin{lem}\label{GHtree}
The trees $\frac{1}{n} \mathbf{T}^g_{\lambda_n}$ converge for the local Gromov--Hausdorff distance to $\mathbf{Y}_{2\sqrt{2}}$.
\end{lem}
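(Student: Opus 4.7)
The plan is to reduce the convergence to two essentially independent facts: the correct asymptotic of the mean of the offspring distribution, and a standard geometric-to-exponential limit combined with an asymptotic binarity argument. By Theorem \ref{thm1_GW}, $\mathbf{T}^g_{\lambda_n}$ is a Galton--Watson tree with $\mu_{\lambda_n}(1)=m_{\lambda_n}$ and $\mu_{\lambda_n}(k)=m_{\lambda_n}(1-m_{\lambda_n})^{k-1}$, so everything should reduce to the scaling of $1-m_{\lambda_n}$.

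First I would carry out the asymptotic calculation. Differentiating \eqref{eqn_h_lambda} gives $\lambda'(h)=(1-4h)(1+8h)^{-5/2}$, which vanishes at $h=\tfrac14$, while $\lambda''(\tfrac14)=-4\cdot 3^{-5/2}$. Writing $\eps_n=\tfrac14-h_n$, the assumption $\lambda_n=\lambda_c(1-\tfrac{2}{3n^4})+o(n^{-4})$ combined with a second-order expansion yields $\eps_n\sim\tfrac{1}{2n^2}$. Expanding \eqref{eqn_m_lambda} around $h=\tfrac14$ gives $1-m_{\lambda_n}\sim 2\sqrt{1-4h_n}=4\sqrt{\eps_n}$, hence
\[ n(1-m_{\lambda_n}) \xrightarrow[n\to\infty]{} 2\sqrt{2}. \]

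Next I would set up the contour structure of $\mathbf{T}^g_{\lambda_n}$. Since $\mu_{\lambda_n}(0)=0$, every vertex has at least one child, and we may decompose the tree into maximal \emph{chains} of degree-$1$ vertices joined by branching points with $\geq 2$ children. The length $L$ of each chain is geometric with parameter $1-m_{\lambda_n}$ and they are independent across chains (by the branching property), while at each branching point the number of children is at least $2$ and equals $2+G$ with $G$ geometric of parameter $m_{\lambda_n}$ (starting from $0$) independently of the rest. The rescaled length $L/n$ satisfies $\P(L/n>t)=m_{\lambda_n}^{\lceil tn\rceil}\to e^{-2\sqrt{2}\,t}$, so chain lengths converge jointly to i.i.d.\ $\mathrm{Exp}(2\sqrt{2})$ random variables along the binary skeleton.

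Then I would fix $R>0$ and compare $B_R(\tfrac1n\mathbf{T}^g_{\lambda_n})$ with $B_R(\mathbf{Y}_{2\sqrt{2}})$. The ball of radius $R$ in $\mathbf{Y}_{2\sqrt{2}}$ contains a.s.\ finitely many branching points $N_R$ (it is dominated by a pure birth process of rate $2\sqrt{2}$, so $\E[N_R]\leq e^{2\sqrt{2}R}$). On the event $E_n$ that no branching point within graph distance $Rn$ from the root has more than $2$ children, the rescaled tree $B_R(\tfrac1n\mathbf{T}^g_{\lambda_n})$ has the same combinatorial structure as $B_R(\mathbf{Y}_{2\sqrt{2}})$, and the convergence of edge lengths above gives convergence in the Gromov--Hausdorff sense on $E_n$. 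A union bound shows
\[ \P(E_n^c) \leq \E[\#\{\text{branchings in }B_{Rn}\}]\cdot(1-m_{\lambda_n}) = O(1/n), \]
where the expected number of branching points is uniformly bounded in $n$ thanks to the comparison with a Yule-type process (or directly via the first-moment formula for the GW tree). This yields the claimed local Gromov--Hausdorff convergence in probability, hence in distribution.

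The main obstacle is the last step: making the coupling argument clean enough that the combinatorial isomorphism on $E_n$ together with the joint convergence of chain lengths really produces Gromov--Hausdorff convergence of the balls. The most convenient way is to couple both trees on the same probability space using the same i.i.d.\ uniforms, so that on $E_n$ the correspondence is a bijection of the skeletons and one only needs to control the discrepancy between geometric and exponential edge lengths, which is handled by Skorokhod representation together with the standard $n(1-m_n)\to 2\sqrt{2}$ limit; control of the expected number of branching points in $B_{Rn}$ is subsumed in the preceding coupling since the binary skeleton of $\frac1n\mathbf{T}^g_{\lambda_n}$ is itself a near-critical Yule process.
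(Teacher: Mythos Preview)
Your proof is correct and follows essentially the same approach as the paper's sketch: compute $1-m_{\lambda_n}\sim 2\sqrt{2}/n$, observe that the tree is a binary tree with i.i.d.\ geometric edge lengths up to the event that some branching vertex has $\geq 3$ children, and show this event has vanishing probability on balls of radius $Rn$. The paper phrases the binarity reduction via an auxiliary Galton--Watson tree $\widetilde{\mathbf{T}}^g_{\lambda}$ with offspring distribution supported on $\{1,2\}$, coupled to $\mathbf{T}^g_{\lambda}$, while you phrase it via the high-probability event $E_n$; these are equivalent, and your version spells out the first-moment bound $\mu_{\lambda_n}([3,+\infty))=O(1/n^2)$ times $O(n)$ vertices more explicitly than the paper does.
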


\begin{idproof}
We recall that $\mathbf{T}^g_{\lambda_n}$ is a Galton--Watson tree with offspring distribution $\mu_{\lambda_n}$, where $\mu_{\lambda_n} (k)=m_{\lambda_n} \left( 1-m_{\lambda_n} \right)^{k-1}$ for every $k \geq 1$, and $m_{\lambda}$ is explicitly given by \eqref{eqn_m_lambda}. We can compute $m_{\lambda_n}=1-\frac{2\sqrt{2}}{n}+O \left( \frac{1}{n^2} \right)$. Let $\widetilde{\mu}_{\lambda}$ be the distribution defined by $\widetilde{\mu}_{\lambda}(1)=m_{\lambda}$ and $\widetilde{\mu}_{\lambda}(2)=1-m_{\lambda}$, and let $\mathbf{\widetilde{T}}^g_{\lambda}$ be a Galton--Watson tree with offspring distribution $\widetilde{\mu}_{\lambda}$. Then $\mathbf{\widetilde{T}}^g_{\lambda}$ is a copy of $\mathbf{B}$ where the length of each edge is a geometric variable of parameter $1-m_{\lambda}$, so $\frac{1}{n} \mathbf{\widetilde{T}}^g_{\lambda_n}$ converges to $\mathbf{Y}_{2\sqrt{2}}$. Moreover, $\mathbf{T}^g_{\lambda}$ can be obtained by adding children to some of the vertices of $\mathbf{\widetilde{T}}^g_{\lambda}$ with two children. Since $\mu_{\lambda_n} \left( [3,+\infty[ \right)=O \left( \frac{1}{n^2} \right)$, the probability to affect a vertex is of order $\frac{1}{n^2}$. The number of vertices at height of order $n$ in $\mathbf{T}^g_{\lambda_n}$ is of order $n$, so the difference bewteen $\mathbf{T}^g_{\lambda_n}$ and $\mathbf{\widetilde{T}}^g_{\lambda_n}$ does not affect the scaling limit. 
\end{idproof}

\medskip

However, taking the scaling limit of $\mathbf{T}^g_{\lambda_n}$ is not enough to obtain a description of infinite geodesics in $\hp$. Three different kinds of problems could prevent this:
\begin{itemize}
\item[(i)]
it is not completely clear that the infinite geodesics in $\hp$ form a tree,
\item[(ii)]
two different discrete leftmost geodesics might be too close and collide in the scaling limit,
\item[(iii)]
discrete paths that are not infinite leftmost geodesics might become infinite geodesics in the scaling limit.
\end{itemize}
We take care of item (i) right now, while the goal of Lemmas \ref{nocollision} and \ref{No_new_geodesics} will be to rule out items (ii) and (iii).

The fact that the infinite geodesics of $\hp$ indeed form a tree is a quite strong result, that follows from the confluence of geodesics properties in the Brownian map. More precisely, it will be a consequence of \cite[Proposition~28]{AKM15}, which we recall here. We write $m_{\infty}$ for the Brownian map, and denote its root by $\rho$.

\begin{prop}\cite{AKM15}\label{no-o-}
Almost surely, for any $x \in m_{\infty}$, if $\gamma$ and $\gamma'$ are two geodesics from $\rho$ to $x$ that coincide on a neighbourhood of $x$, then $\gamma=\gamma'$.
\end{prop}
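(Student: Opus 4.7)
The plan is to reformulate the proposition as a structural fact about geodesics in the Brownian map and then to invoke Le Gall's snake-based description of $m_\infty$ to establish it. Suppose $\gamma \neq \gamma'$ are two geodesics from $\rho$ to $x$ coinciding on $[T-\epsilon,T]$, with $T=d(\rho,x)$. The common point $y := \gamma(T-\epsilon)$ lies strictly inside $\gamma$, and $\gamma|_{[0,T-\epsilon]}$ and $\gamma'|_{[0,T-\epsilon]}$ are two distinct geodesics from $\rho$ to $y$ (they cannot agree, else $\gamma=\gamma'$), so $y$ is a point of multiplicity at least $2$ for the geodesic distance from $\rho$. Conversely, if some multiplicity point $y$ lay on a geodesic from $\rho$ to a farther point $x$, concatenating its two distinct geodesics with a common subsegment $y \to x$ would produce a counterexample to the proposition. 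Hence the proposition is equivalent to the structural fact that multiplicity points are never interior points of geodesics from $\rho$---they are \emph{geodesic termini}.

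To prove this structural fact I would invoke Le Gall's description of $m_\infty$ as a quotient $\mathcal{T}_e / \approx$ of the CRT by the Brownian-snake equivalence relation, together with the fact that every geodesic from $\rho$ is a ``simple geodesic'' determined by a preimage of its endpoint in $\mathcal{T}_e$ and by a prescribed combinatorial construction. Under this correspondence, a point $y$ with $k$ distinct geodesics from $\rho$ corresponds to $k$ distinct preimages of $y$ in $\mathcal{T}_e$, each producing its own simple geodesic. If $y$ were in the interior of a geodesic from $\rho$ to some $x$ (corresponding to a preimage $t$ of $x$), the simple geodesic from $\rho$ to $t$ would pass through exactly one preimage $s$ of $y$, and its initial portion would coincide with the simple geodesic to $s$. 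A careful inspection of the snake construction then shows that the simple geodesics selected by the other preimages $s'$ of $y$ cannot be extended, by a common final segment, all the way to $t$; this contradicts the existence of a second geodesic from $\rho$ to $y$ whose continuation would reach $x$.

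The main obstacle is precisely this last step: carefully tracking the snake-based construction of simple geodesics to show that a multiplicity point for the geodesic distance from $\rho$ admits no outgoing extension. Purely metric or topological arguments on $m_\infty \simeq S^2$ are insufficient, because in a generic geodesic metric space multiplicity and geodesic extendibility are perfectly compatible; one genuinely needs the fine combinatorial description of geodesics via the Brownian snake, as developed by Le Gall and refined in \cite{AKM15}, to rule out this configuration almost surely.
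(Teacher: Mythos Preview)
The paper does not prove this proposition at all: it is quoted verbatim from \cite{AKM15} (Proposition~28 there) and used as a black box. So there is no ``paper's own proof'' to compare your proposal against.

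As for the content of your sketch: your reformulation is correct and is exactly the right way to think about the statement --- the proposition is equivalent to saying that every point with at least two geodesics to $\rho$ (a point of the cut locus) is a geodesic terminus, i.e.\ never lies in the interior of a geodesic from $\rho$. Your plan to deduce this from Le Gall's description of all geodesics from $\rho$ as simple (snake) geodesics is also the right one, and is essentially how the result is obtained in \cite{AKM15} building on \cite{LG09}.

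That said, your write-up is a sketch rather than a proof, and you acknowledge this yourself: the sentence ``a careful inspection of the snake construction then shows that the simple geodesics selected by the other preimages $s'$ of $y$ cannot be extended\ldots'' is exactly the nontrivial content of the proposition, and you have not supplied that inspection. The actual argument requires knowing that the points identified under the snake equivalence $\approx$ but not under the CRT equivalence form precisely the skeleton of the tree coded by the head of the snake, and that the simple-geodesic construction forces any geodesic from $\rho$ passing through such a point to terminate there. This is worked out in \cite{AKM15}; if you want a self-contained proof you would need to reproduce that analysis, not just point at it.
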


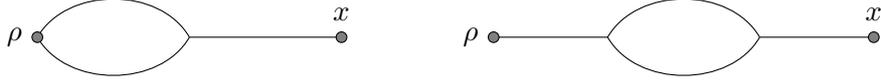
\begin{figure}
\begin{center}
\begin{tikzpicture}

\draw (0,0) to[bend left=60] (2,0);
\draw (0,0) to[bend right=60] (2,0);
\draw (2,0)--(4,0);
\draw (0,0)node{};
\draw (-0.3,0) node[texte]{$\rho$};
\draw (4,0)node{};
\draw (4,0.3) node[texte]{$x$};

\begin{scope}[shift={(6,0)}]
\draw (0,0)--(1.5,0);
\draw (1.5,0) to[bend left=60] (3.5,0);
\draw (1.5,0) to[bend right=60] (3.5,0);
\draw (3.5,0)--(5,0);
\draw (0,0)node{};
\draw (-0.3,0) node[texte]{$\rho$};
\draw (5,0)node{};
\draw (5,0.3) node[texte]{$x$};
\end{scope}

\end{tikzpicture}
\end{center}
\caption{Proposition \ref{no-o-} : almost surely, for any $x$, none of these two cases occurs.}
\end{figure}

We write $\mathbf{T}^g(\hp)$ for the set of those points of $\hp$ that lie on an infinite geodesic of $\hp$ started from $\rho$. By local isometry of the Brownian plane and the Brownian map and scale invariance of the Brownian plane \cite{CLGplane}, Proposition \ref{no-o-} also holds for the Brownian plane. By the absolute continuity relations of \cite{B16}, it also holds for $\hp$. We claim that it implies that for every $x \in \mathbf{T}^g(\hp)$, there is a unique geodesic from $\rho$ to $x$ in $\hp$. Indeed, let $x \in \mathbf{T}^g(\hp)$ and let $\gamma$ be an infinite geodesic of $\hp$ passing through $x$. Let $\gamma'$ be a geodesic from $\rho$ to $x$. Finally, let $y$ be a point on the ray $\gamma$ such that $d(\rho,y)>d(\rho,x)$. The concatenation $\gamma''$ of $\gamma'$ and the part of $\gamma$ between $x$ and $y$ is a geodesic from $\rho$ to $y$ that coincides with $\gamma$ in a neighbourhood of $y$. By Proposition \ref{no-o-}, the path $\gamma''$ must coincide with $\gamma$, so $\gamma'$ must coincide with $\gamma$, which proves our claim.

We equip $\mathbf{T}^g(\hp)$ with its natural tree metric: if $x,y \in \mathbf{T}^g(\hp)$, the intersection of the geodesics from $\rho$ to $x$ and $y$ is compact so there is a unique point $z$ on it that maximizes $d(\rho, z)$. We write $d_{\mathbf{T}^g(\hp)}(x,y)=d_{\hp}(x,z)+d_{\hp}(z,y)$. Equipped with this distance $\mathbf{T}^g(\hp)$ is a real tree. However, it is not obvious that it is locally compact (for example it is not if $\hp$ is replaced by $\R^2$ equipped with the Euclidean norm).

\subsection{Two lemmas about near-critical strips}

We first show that two disjoint geodesics in $\mathbf{T}^g_{\lambda}$ are quite well-separated, which rules out problem (ii). We denote by $\gamma_{\ell}$ and $\gamma_r$ the left and right boundaries of the infinite strip $S^1_{\lambda_n}$.

\begin{lem} \label{nocollision}
Let $b>a>0$ and $\eps>0$. Then there is $\delta>0$ such that, for $n$ large enough, the following holds: 
\[ \mathbb{P} \left( \forall i,j \in [an, bn], d_{S^1_{\lambda_n}} \! (\gamma_{\ell}(i), \gamma_r(j)) \geq \delta n \right) \geq 1-\eps.\]
\end{lem}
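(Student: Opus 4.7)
The plan is to combine a near-critical width estimate for the strip $S^1_{\lambda_n}$ with a geometric step turning width into graph distance.

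First I would control the perimeter process $Y_{\lambda_n}$ associated to the skeleton $\boldsymbol{\tau}^{1,*}_{\lambda_n}$, which records the number of vertices at reverse height $k$ and hence the size of the top boundary of $B_k^{\bullet}(S^1_{\lambda_n})$. Starting from the explicit one-dimensional distribution in Lemma \ref{distribY}, the iterated generating function in Lemma \ref{itererg}, and the near-critical scaling $m_{\lambda_n}=1-\frac{2\sqrt 2}{n}+O(n^{-2})$ established in the sketch of Lemma \ref{GHtree}, I would show that $Y_{\lambda_n}(\lfloor sn\rfloor)/n$ converges in distribution to a non-degenerate positive random variable for every $s>0$. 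Combining this with the (reverse) Markov property of $Y_{\lambda_n}$ inherited from the branching structure of $\theta_{\lambda_n}$, and a union bound over a fine enough grid in $[a,b]$, yields: for any $\eta>0$ there exists $c=c(a,b,\eta)>0$ such that, for $n$ large enough,
\[\mathbb{P}\left(Y_{\lambda_n}(k)\geq cn\text{ for all }k\in[an,bn]\right)\geq 1-\eta.\]

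Next I would translate this width bound into a lower bound on $d_{S^1_{\lambda_n}}(\gamma_\ell(i),\gamma_r(j))$. Assume for contradiction that there exist $i,j\in[an,bn]$ and a path $P$ of length $L<\delta n$ joining $\gamma_\ell(i)$ to $\gamma_r(j)$. Concatenating $P$ with the initial segments $\gamma_\ell([0,i])$ and $\gamma_r([0,j])$ yields a closed loop $\mathcal{C}$ in the planar map $S^1_{\lambda_n}$, based at $\rho$, of length at most $(2b+\delta)n$. By Jordan's theorem $\mathcal{C}$ bounds a finite region $R$ which, for $\delta$ small enough, contains the full top boundary of $B_k^{\bullet}(S^1_{\lambda_n})$ for $k=\min(i,j)$, hence at least $Y_{\lambda_n}(k)\geq cn$ consecutive downward triangles at reverse height $k$. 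Each such triangle has its bottom edge either on $P$ or on $\gamma_\ell\cup\gamma_r$, and a counting argument based on the skeleton decomposition (tracking how each of the $\geq cn$ top edges can be ``closed off'' inside $R$) forces $L$ to be at least a positive constant fraction of $cn$, contradicting $L<\delta n$ provided $\delta$ is chosen small enough.

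The main obstacle is making the counting step rigorous: a priori the Boltzmann triangulations filling the holes of $\boldsymbol{\tau}^{1,*}_{\lambda_n}$ could allow very short paths across a wide hole, undermining the isoperimetric-style argument, since the diameter of a random triangulation of a $p$-gon can be much smaller than $p$. To rule this out I would intersect with the event that no hole of the skeleton at reverse height $\leq bn$ has perimeter greater than some constant $A=A(\varepsilon)$, which holds with high probability by the tail estimate \eqref{Z_p} for $w_{\lambda_n}(p)$ combined with a union bound over the $O(n^2)$ vertices of $\boldsymbol{\tau}^{1,*}_{\lambda_n}$ up to reverse height $bn$; on this event all ``shortcuts'' inside a single hole have bounded length and only affect the counting by an $O(1)$ factor. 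Finally, a triangle-inequality argument on a grid of mesh $\delta n/4$ in $[an,bn]^2$ promotes the bound from fixed $(i,j)$ to the uniform statement of the lemma, at the price of only a constant loss on $\delta$.
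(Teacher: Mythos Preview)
Your approach is quite different from the paper's, and it has real gaps.

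\textbf{The paper's argument.} The proof in the paper does not try to convert a width estimate into a distance estimate. Instead it argues by contradiction through the scaling limit: if with probability $\geq \eps$ there exist $i,j\in[an,bn]$ with $d_{S^1_{\lambda_n}}(\gamma_\ell(i),\gamma_r(j))<\delta n$, then, since independently $\mathbf{T}^g_{\lambda_n}$ consists of a single ray up to height $(b+1)n$ with probability $\to e^{-2\sqrt 2(b+1)}>0$, one can with probability bounded below glue $\gamma_\ell$ to $\gamma_r$ and realise $B^{\bullet}_{(b+1)n}(\T_{\lambda_n})$ as the hull of the strip. The short path together with a segment of the identified geodesic then gives a cycle of length $\leq 2\delta n$ in $\T_{\lambda_n}$ separating $\rho$ from infinity at height $\geq (a-\delta)n$. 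Passing to the limit $\frac1n\T_{\lambda_n}\to \hp$, this produces a single point of $\hp$ that disconnects $\rho$ from infinity, contradicting the homeomorphicity of $\hp$ to the plane (Proposition~18 of \cite{B16}). No quantitative isoperimetry inside the strip is needed.

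\textbf{The gaps in your route.} First, the scaling in your width estimate is off: for $k\sim sn$ one has $Y_{\lambda_n}(k)$ of order $n^2$, not $n$. This is visible in the proof of Lemma~\ref{No_new_geodesics}, where one computes $\E\big[(1-z/n^2)^{P_{\lambda_n}(Cn)}\big]$, and is consistent with the usual perimeter--distance relation in near-critical planar maps. Accordingly, the skeleton up to reverse height $bn$ has order $n^3$ vertices, not $n^2$.

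Second, and more seriously, the key claim that all hole perimeters stay bounded by a constant $A(\eps)$ is false. Near criticality $\theta_{\lambda_n}(i)\sim C\,i^{-5/2}$ for $i$ up to a cutoff of order $n^2$ (this follows from \eqref{theta_lambda} together with $w_{\lambda_c}(p)\sim c\,12^p p^{-5/2}$); in particular $\P(c_v>A)\asymp A^{-3/2}$. A union bound over the $\asymp n^3$ skeleton vertices then forces the maximal hole perimeter to be of polynomial order in $n$, not $O(1)$. With holes that large, a path from $\gamma_\ell$ to $\gamma_r$ can traverse a single hole whose Boltzmann filling has diameter much smaller than its perimeter, so the ``each shortcut costs $O(1)$'' accounting collapses, and the counting argument gives no linear lower bound on the crossing length. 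In effect, turning the perimeter bound $Y_{\lambda_n}(k)\asymp n^2$ into the metric bound $d\geq \delta n$ is precisely the nontrivial content of the lemma; your sketch assumes it rather than proving it.
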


Note that this lemma is very similar to Lemma 3.4 of \cite{LG11}. However, the slices considered here are not exactly the same, and the lemma of \cite{LG11} only gives the result with positive probability.

\begin{proof}
The idea of the proof is as follows: assume that two points on $\gamma_r$ and $\gamma_{\ell}$ at height in $[a,b]$ are too close from each other. Then with positive probability $S^1_{\lambda}$ is the only strip of $\T_{\lambda_n}$ that reaches height $b$. In this case, we have a small separating cycle in $\T_{\lambda_n}$, which becomes a pinch point in the scaling limit. This contradicts the homeomorphicity of $\hp$ to the plane (this is Proposition 18 of \cite{B16}, and a consequence of the homeomorphicity of the Brownian map to the sphere \cite{LGP08}).

More precisely, assume the lemma is not true. Then up to extraction, for all $\delta>0$ and for $n$ large enough, we have
\[ \mathbb{P} \left( \exists i,j \in [an, bn], d_{S^1_{\lambda_n}}(\gamma_{\ell}(i), \gamma_r(j)) < \delta n \right) \geq \eps.\]
Note that if this happens, then by the triangle inequality we must have $|i-j|<\delta n$.

On the other hand, by Theorem \ref{thm1_GW}, the probability that the tree $\mathbf{T}^g_{\lambda_n}$ has only one vertex at height $(b+1)n$ is $\mu_{\lambda_n}(1)^{(b+1)n}=m_{\lambda_n}^{(b+1)n} \longrightarrow e^{-2 \sqrt{2} (b+1)}$. Since the tree $\mathbf{T}^g_{\lambda_n}$ is independent of the strip $S^1_{\lambda_n}$, for $n$ large enough, the following event occurs with probability at least $\frac{1}{2} e^{-2\sqrt{2} (b+1)}\eps$:

\begin{center}
$\{$there are $i,j \in [an, bn]$ with $d_{S^1_{\lambda_n}}(\gamma_{\ell}(i), \gamma_r(j)) < \delta n \}$

AND

$\{ S^1_{\lambda_n}$ is the only strip of $\T_{\lambda_n}$ at height $(b+1)n \}$.
\end{center}

If this event occurs, the hull of radius $(b+1)n$ in $\T_{\lambda_n}$ is the hull of radius $(b+1)n$ in $S^1_{\lambda_n}$, where the boundary geodesics $\gamma_{\ell}$ and $\gamma_r$ have been glued together. Hence, by concatenating the geodesic in $S^1_{\lambda_n}$ between $\gamma_{\ell}(i)$ and $\gamma_{r}(j)$ and a portion of $\gamma_{\ell}=\gamma_r$ of length $|i-j|$, we get a cycle of length not greater than $2 \delta n$, at height at least $(a-\delta)n$, and that separates $\rho$ from infinity. In other words, for all $\delta>0$, with probability at least $\frac{1}{2} e^{-2\sqrt{2} (b+1)}\eps$, the following event occurs:

\begin{center}
$\{$there is a point $z \in \T_{\lambda_n}$ with $a-\delta \leq \frac{1}{n} d(\rho, z) \leq b+\delta$ such that for any continuous path $p$ from $\rho$ to infinity, there is a point $y$ of $p$ such that $\frac{1}{n} d(y,z) \leq 2 \delta\}$. \label{separatingcycle}
\end{center}

This last property is closed for the Gromov--Hausdorff topology. To prove it properly, we would need to replace the path to infinity by a path to a point $x$ at distance from $\rho$ large enough so that $x$ cannot lie in $B_{(b+1)n}^{\bullet}(\T_{\lambda_n})$, and then to replace the continuous path by a $\delta$-chain. We omit the details here, see Appendix of \cite{B16} for something very similar.

Hence, by the main Theorem of \cite{B16}, for any $\delta>0$, with probability at least $\frac{1}{2} e^{-2\sqrt{2} (b+1)}\eps$, there is a point $z \in \hp$ with $a-\delta \leq d(\rho, z) \leq b+\delta$ such that any continuous path from $\rho$ to infinity contains a point at distance at most $\delta$ from $z$. Since this event is nondecreasing in $\delta$, with probability at least $\frac{1}{2} e^{-2\sqrt{2} (b+1)}\eps$ it occurs for every $\delta>0$. If it does, let $z_n$ be such a point for $\delta=\frac{1}{n}$, and let $z$ be a subsequential limit of $(z_n)$. Then we have $a \leq d(\rho, z) \leq b$ and every infinite path from $\rho$ to infinity must contain $z$. Hence, there is a single point that separates the origin from infinity in $\hp$, which is impossible by homeomorphicity of $\hp$ to the plane.
\end{proof}

The next lemma shows that for any $x \in \T_{\lambda_n}$, there is a geodesic from $x$ to $\rho$ that coincides with a leftmost infinite geodesic on a quite long distance. Combined with the uniqueness of geodesics between $\rho$ and points of $\mathbf{T}^g(\hp)$, this will rule out problem (iii). We consider a strip $S^0_{\lambda_n}$ and we denote by $\gamma_{\ell}$ and $\gamma_r$ its left and right geodesic boundaries.

\begin{defn}
Let $x \in S^0_{\lambda_n}$ and $a>0$. We say that $x$ is \emph{$a$-close to the boundary} if there is a geodesic $\gamma$ from $\rho$ to $x$ that contains either $\gamma_{\ell}(i)$ for every $0 \leq i \leq a$, or $\gamma_r(i)$ for every $0 \leq i \leq a$.
\end{defn}

\begin{lem} \label{No_new_geodesics}
Let $\eps, r>0$.
\begin{enumerate}
\item[(i)]
There is $C>0$ such that for $n$ large enough
\[ \P \left( \mbox{every $x \in \partial B_{Cn}^{\bullet} \left( S^0_{\lambda_n} \right)$ is $(rn)$-close to the boundary} \right) \geq 1-\eps.\]
\item[(ii)]
There is $C'>0$ such that for $n$ large enough
\[ \P \left( \mbox{every $x \in S^0_{\lambda_n} \backslash B_{C'n} \left( S^0_{\lambda_n} \right)$ is $(rn)$-close to the boundary} \right) \geq 1-2 \eps.\]
\end{enumerate}
\end{lem}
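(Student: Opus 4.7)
The plan is to prove (i) by a Gromov--Hausdorff compactness argument in the spirit of Lemma~\ref{nocollision}, and to deduce (ii) as a short corollary of (i). Notice first that a vertex $x \in \partial B_{Cn}^\bullet(S^0_{\lambda_n})$ is $(rn)$-close to the boundary if and only if $d_{S^0_{\lambda_n}}(\gamma_\ell(rn),x) = (C-r)n$ or $d_{S^0_{\lambda_n}}(\gamma_r(rn),x) = (C-r)n$, since such an identity says precisely that some geodesic from $\rho$ to $x$ factors through $\gamma_\ell(rn)$ or $\gamma_r(rn)$. The crucial structural input is Definition~\ref{defn_strip}(iii): $\gamma_\ell$ and $\gamma_r$ are the only infinite leftmost geodesic rays of $S^0_{\lambda_n}$ emanating from $\rho$.

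Suppose for contradiction that (i) fails. Then, along an extracted subsequence, there exist $\eps>0$ and $n_k, C_k \to \infty$ such that with probability at least $\eps$ one can find $x_k \in \partial B_{C_k n_k}^\bullet(S^0_{\lambda_{n_k}})$ for which neither of the two distance identities above holds. Exactly as in the proof of Lemma~\ref{nocollision}, I would realize $S^0_{\lambda_{n_k}}$ as a uniformly-positive-probability substrip of $\T_{\lambda_{n_k}}$---using Theorem~\ref{thm1_GW} together with Lemma~\ref{0almost1_strip} to transfer between $S^0_{\lambda_{n_k}}$ and the root-adjacent strip $S^1_{\lambda_{n_k}}$---rescale by $1/n_k$, and extract a subsequential Gromov--Hausdorff limit inside $\hp$ via the main result of \cite{B16}. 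A diagonal extraction along $C_k \to \infty$, combined with the Arzel\`a--Ascoli theorem applied to the discrete geodesics $\rho \to x_k$, then produces an infinite geodesic ray $\sigma$ in $\hp$ lying inside the limit strip, which satisfies $d(\gamma_\ell^\infty(r),\sigma(t)) \geq t-r$ and $d(\gamma_r^\infty(r),\sigma(t)) \geq t-r$ for every $t \geq r$. Invoking Proposition~\ref{no-o-} (transferred from the Brownian map to $\hp$ by local isometry and scale invariance, as in Section~4.1) prevents $\sigma$ from being asymptotic to $\gamma_\ell^\infty$ or $\gamma_r^\infty$, which contradicts the passage to the Gromov--Hausdorff limit of Definition~\ref{defn_strip}(iii).

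The main obstacle is precisely this passage to the limit: Definition~\ref{defn_strip}(iii) is a discrete statement, and some care is needed to show that the event ``only two infinite leftmost geodesic rays emanate from $\rho$'' is closed for the local Gromov--Hausdorff topology on strips. I would handle this via the closed-event formalism developed in Appendix~A, which is specifically designed for this kind of passage to the limit of geodesic-based events.

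For part (ii), set $C' := C$ with $C$ the constant from (i). Given $x \in S^0_{\lambda_n} \setminus B_{C'n}(S^0_{\lambda_n})$, fix any geodesic $\sigma$ from $\rho$ to $x$; since $d(\rho,x) \geq C'n = Cn$, the path $\sigma$ contains a vertex $y$ at distance exactly $Cn$ from $\rho$, and such a $y$ necessarily lies on $\partial B_{Cn}^\bullet(S^0_{\lambda_n})$ (a vertex on a geodesic from $\rho$ cannot be trapped in a finite hole of the hull). Invoking (i) on $y$ provides a geodesic from $\rho$ to $y$ whose initial $rn$ edges coincide with $\gamma_\ell$ or $\gamma_r$; concatenating this with the sub-path of $\sigma$ from $y$ to $x$ yields the required geodesic from $\rho$ to $x$, the small loss in the probability bound ($2\eps$ instead of $\eps$) absorbing any union-bound slack introduced in the reduction.
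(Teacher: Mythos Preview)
Your argument for (i) has a genuine gap, and in fact two related ones.

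First, the limiting inequalities you write down are vacuous. If $x_k\in\partial B^{\bullet}_{C_k n_k}(S^0_{\lambda_{n_k}})$ is not $(rn_k)$-close to the boundary, the information this gives is $d\big(\gamma_\ell(rn_k),x_k\big)>(C_k-r)n_k$ and similarly for $\gamma_r$, a \emph{strict} inequality. After rescaling and passing to a Gromov--Hausdorff limit, strictness is lost and you get $d\big(\gamma_\ell^\infty(r),\sigma(t)\big)\geq t-r$. But this last inequality holds for \emph{any} geodesic ray $\sigma$ from $\rho$, by the reverse triangle inequality, so it does not prevent $\sigma$ from coinciding with $\gamma_\ell^\infty$ (or $\gamma_r^\infty$) on $[0,r]$. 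In other words, the discrete ``bad'' event is open, and its closure contains the ``good'' configurations; the compactness argument produces no contradiction.

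Second, even granting a third ray $\sigma$, your contradiction step is circular. You want to invoke ``the limit strip has only two infinite geodesic rays'', obtained by passing Definition~\ref{defn_strip}(iii) to the limit. But that discrete property speaks only of \emph{leftmost} geodesic rays, which has no continuum meaning, and the event ``only two geodesic rays'' is not simply generated by geodesics in the sense of Appendix~A (it is a $\forall$-statement over geodesics, not an $\exists$). More to the point, the structure of geodesic rays in $\hp$ (Theorem~\ref{thm3_hbp}) is precisely what the paper is proving \emph{using} this lemma: Lemma~\ref{No_new_geodesics} is the technical input that rules out extra geodesics in the scaling limit (item (iii) in Section~4.1). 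You cannot assume that conclusion here.

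The paper's proof of (i) avoids both issues by working directly in the discrete via the skeleton: for $x\in\partial B^{\bullet}_{Cn}(S^0_{\lambda_n})$, the leftmost geodesic from $x$ to $\rho$ merges with $\gamma_\ell$ or $\gamma_r$ at a height governed by the maximal height of the Galton--Watson trees grafted on the spine between $x$ and the boundary. One then shows, by explicit generating-function asymptotics (Lemmas~\ref{itererg}, \ref{Piexact}, \ref{distribY}), that this maximal height is at most $(C-r)n$ with high probability for $C$ large.

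Your argument for (ii) is essentially the paper's, but taking $C'=C$ does not quite work: a vertex $x$ with $d(\rho,x)>Cn$ may still lie inside $B^{\bullet}_{Cn}(S^0_{\lambda_n})$ (in a filled-in pocket), in which case the geodesic from $\rho$ to $x$ need not meet $\partial B^{\bullet}_{Cn}$. You need $C'\geq C$ large enough that $B^{\bullet}_{Cn}(S^0_{\lambda_n})\subset B_{C'n}(S^0_{\lambda_n})$ with probability $\geq 1-\eps$; this is exactly the tightness-of-hull-radius step in the paper, and it is where the extra $\eps$ in the bound $1-2\eps$ comes from.
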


Note that we gave two quite similar versions of the lemma. The version (i) is the most natural to prove, whereas (ii) passes more  easily to the Gromov--Hausdorff limit and is the one we will use later.
\begin{proof}
\begin{enumerate}
\item[(i)]
We recall that $\boldsymbol{\tau}^0_{\lambda_n}$ is the skeleton of $S^0_{\lambda_n}$. We first argue that showing point (i) of the lemma is equivalent to bounding the heights of the trees grafted on the spine of $\boldsymbol{\tau}^0$.
Let $k>0$ (we will precise its value later). We denote by $x_1, \dots, x_{p+1}$ the vertices of $\partial B_{k}^{\bullet} \left( S^0_{\lambda_n} \right)$ that lie on the right of the spine of $\boldsymbol{\tau}^0$, from left to right. For $1 \leq i \leq p$, let also $t_i$ be the subtree of descendants of the edge $\{ x_i, x_{i+1} \}$ in $\boldsymbol{\tau}^0$. For $1 \leq i \leq p$, let also $\gamma_i$ be the leftmost geodesic from $x_i$ to $\rho$. It is clear (see Figure \ref{Skeleton_decomposition_strip}) that the distance between $x_i$ and the point at which $\gamma_i$ and $\gamma_r$ merge is equal to the maximum of the heights of the trees starting between $x_i$ and $\gamma_r$. Hence, we have $\gamma_i(j) \in \gamma_{r}$ as soon as $j$ is greater than the heights of all the trees $t_i, t_{i+1}, \dots, t_p$. Therefore, if we denote by $H^r_{\lambda_n}(k)$ the height of the forest $(t_1, \dots, t_p)$, then for any $i$, there is a geodesic from $x_i$ to $\rho$ that contains $\gamma_r(j)$ for $0 \leq j \leq k-H^r_{\lambda_n}(k)$. We can do the same reasoning for vertices on the left of the spine. We denote by $H^{\ell}_{\lambda_n}(k)$ the height of the forest that is defined similarly on the left of the spine, and write $H_{\lambda_n}(k)=\max \left( H^{\ell}_{\lambda_n}(k), H^r_{\lambda_n}(k) \right)$.
It is then enough to find $C$ such that for $n$ large enough:
\begin{equation} \label{sufficientcond}
\mathbb{P} \left(H_{\lambda_n}(Cn) \leq (C-r)n \right) \geq 1-\eps.
\end{equation}

We write $P_{\lambda_n}(Cn)=|\partial B_{Cn}^{\bullet} \left( S^0_{\lambda_n} \right)|-2$ (this is the number of vertices of $\boldsymbol{\tau}^0$ at height $Cn$ that are not on the spine). Conditionally on $P_{\lambda_n}(Cn)$, all the trees of descendants of the vertices $x_i$ are independent Galton--Watson trees conditioned on extinction before a finite time $\lfloor Cn \rfloor$, so we have
\[ \mathbb{P} \left( H_{\lambda_n}(Cn) \leq (C-r)n | P_{\lambda_n}(Cn)=p \right) = \left( \frac{g_{\lambda_n}^{\circ \lfloor (C-r)n \rfloor}(0)}{g_{\lambda_n}^{\circ \lfloor Cn \rfloor}(0)} \right)^p \geq g_{\lambda_n}^{\circ \lfloor (C-r)n \rfloor}(0)^p.\]
By Lemma \ref{itererg}, we get
\[g_{\lambda_n}^{\circ \lfloor (C-r)n \rfloor }(0)= 1-\frac{2}{\sinh^2 \left( \sqrt{2} (C-r) \right)} \frac{1}{n^2} +o \left(\frac{1}{n^2} \right), \]
so $g_{\lambda_n}^{\circ \lfloor (C-r)n \rfloor}(0) \geq 1-\frac{z}{n^2}$ for $n$ large enough, where $z=\frac{3}{\sinh^2 \left( \sqrt{2} (C-r) \right)}$. Hence, we get
\[ \mathbb{P} \left( H_{\lambda_n}(Cn) \leq (C-r)n \right) \geq \mathbb{E} \left[ \left( 1-\frac{z}{n^2} \right)^{P_{\lambda_n}(Cn)} \right].\]
%The "for $n$ large enough" above may depend on $p$, so we need the dominated convergence theorem to obtain the last equation.
By using the distribution of $P_{Cn}$ given by Lemma \ref{distribY}, we obtain
\begin{align*}
\mathbb{E} \left[ \left( 1-\frac{z}{n^2} \right)^{P_{\lambda_n}(Cn)} \right] &= \frac{m_{\lambda_n}^{-\lfloor Cn \rfloor}}{\Pi_{\lambda_n} (\theta_{\lambda_n}(0))} \\
& \hspace{1cm} \times \bigg( \Pi_{\lambda_n} \left( \left( 1-\frac{z}{n^2} \right) g_{\lambda_n}^{\circ (\lfloor Cn \rfloor +1)} (0) \right)-\Pi_{\lambda_n} \left( \left( 1-\frac{z}{n^2} \right) g_{\lambda_n}^{\circ \lfloor Cn \rfloor} (0) \right) \bigg)\\
& \geq \frac{1}{\Pi_{\lambda_n} (\theta_{\lambda_n}(0))} \times m_{\lambda_n}^{-\lfloor Cn \rfloor} \times \left( 1-\frac{z}{n^2} \right) \times \left( g_{\lambda_n}^{\circ (\lfloor Cn \rfloor +1)} (0) - g_{\lambda_n}^{\circ \lfloor Cn \rfloor} (0)\right)\\
& \hspace{2.6cm} \times \Pi'_{\lambda_n} \left( \left( 1-\frac{z}{n^2}\right) g_{\lambda_n}^{\circ \lfloor Cn \rfloor} (0) \right).
\end{align*}
by convexity of $\Pi_{\lambda_n}$. We can now compute everything using Lemmas \ref{itererg} and \ref{Piexact}. As $n \to +\infty$, the first factor goes to $\frac{1}{2}$, the second one goes to $e^{2 \sqrt{2} C}$, the third one goes to $1$. Moreover, by Lemma \ref{itererg} we have
\[ g_{\lambda_n}^{\circ \lfloor Cn \rfloor}(0)=1-\frac{2}{\sinh^2 (\sqrt{2} C)} \frac{1}{n^2}+ O \left( \frac{1}{n^3} \right) \]
and, if $x_n = 1-\frac{c}{n^2} +o \left( \frac{1}{n^2} \right)$, by \eqref{generating} we have
\[g_{\lambda_n}(x_n)-x_n \underset{n \to +\infty}{\sim} \frac{2c \sqrt{c+2}}{n^3},\]
so the fourth factor is equivalent to $4 \sqrt{2} \frac{\cosh \left( \sqrt{2} C \right) }{\sinh^3 \left( \sqrt{2} C \right) } \frac{1}{n^3}$. Finally, by taking the derivative of Lemma \ref{Piexact}, we get
\[\Pi'_{\lambda_n} \left( 1-\frac{c}{n^2} + o \left( \frac{1}{n^2}\right) \right)=\frac{1}{\sqrt{2} \left( \sqrt{2}+\sqrt{c+2} \right)^2} n^3 +o \left( n^3 \right).\]
By putting all these estimates together we obtain
\[\mathbb{E} \left[ \left( 1-\frac{z}{n^2} \right)^{P_{\lambda_n}(Cn)} \right] \xrightarrow[n \to +\infty]{}  e^{2 \sqrt{2} C} \frac{\cosh \left( \sqrt{2} C \right)}{\sinh^3 \left( \sqrt{2} C \right)} \left( 1+\sqrt{z+\coth^2 \left( \sqrt{2} C\right)} \right)^{-2}. \]
This goes to $1$ as $C \to +\infty$ (remember that $z=\frac{3}{\sinh^2 \left( \sqrt{2} (C-r) \right)}$ with $r$ fixed), so if $C$ is chosen large enough, then $\mathbb{E} \left[ \left( 1-\frac{z}{n^2} \right)^{P_{\lambda_n}(Cn)} \right] \geq 1-\eps$ for $n$ large enough. This proves \eqref{sufficientcond} and the version (i) of the lemma.
\item[(ii)]
This is quite easy using version (i). Let $C$ be given by point (i). Note that if any $x \in \partial B_{Cn}^{\bullet} \left( S^0_{\lambda_n} \right)$ is $(rn)$-close to the boundary, then so  is any $x' \in S^0_{\lambda_n} \backslash B_{Cn}^{\bullet} \left( S^0_{\lambda_n} \right)$. Indeed, any geodesic $\gamma$ from $x'$ to $\rho$ must contain a point $x \in \partial B_{Cn}^{\bullet} \left( S^0_{\lambda_n} \right)$, and we can replace the portion of $\gamma$ between $x$ and $\rho$ by a geodesic that coincides with $\gamma_{\ell}$ or $\gamma_r$ between height $0$ and $rn$.

Hence, it is enough to find $C'$ such that with probability $1-\eps$, any point $x' \in S^0_{\lambda_n}$ such that $d(x',\rho) \geq C'n$ is not in $B_{Cn}^{\bullet} \left( S^0_{\lambda_n} \right)$. In other words, we want to prove that the radius of $\frac{1}{n} B_{Cn}^{\bullet} \left( S^0_{\lambda_n} \right)$ from $\rho$ is tight as $n \to +\infty$. Since $S^0_{\lambda_n}$ can be embedded in $\T_{\lambda_n}$in a way that preserves the distances from $\rho$, this is a consequence of the local Gromov--Hausdorff tightness of $\frac{1}{n} \T_{\lambda_n}$.
\end{enumerate}
\end{proof}

Note that by Lemma \ref{0almost1_strip}, Lemma \ref{nocollision} holds if we replace $S^1_{\lambda_n}$ by $S^0_{\lambda_n}$ and Lemma \ref{No_new_geodesics} also holds if we replace $S^0_{\lambda_n}$ by $S^1_{\lambda_n}$. We will use these results for both $S^0_{\lambda_n}$ and $S^1_{\lambda_n}$.

\subsection{Identification of the geodesic tree via Gromov--Hausdorff-closed events}\label{subsec:GH_closed}

The last two lemmas together with Lemma \ref{GHtree} and the fact that $\mathbf{T}^g(\hp)$ is a tree are basically enough to prove Theorem \ref{thm3_hbp}. However, to prove it properly, we need to express the distribution of $\mathbf{T}^g(\hp)$ in terms of closed events for the Gromov--Hausdorff topology, which turns out to be a bit technical. 

Let $\mathbf{t}$ be a (finite or infinite) plane tree with a root vertex $\rho$. If $v \in V(\mathbf{t}) \backslash \{\rho\}$, we write $p_v$ for its parent. Let $(h_v)_{v \in V(\mathbf{t})}$ be a family of nonnegative numbers satisfying $h_{\rho}=0$ and $h_v>h_{p_v}$ for every $v \in V(\mathbf{t}) \backslash \{\rho\}$. We write $\mathbf{t}[h]$ for the metric space obtained from $\mathbf{t}$ by giving, for every $v \in V (\mathbf{t}) \backslash \{ \rho \}$, a length $h_v-h_{p_v}$ to the edge between $p_v$ and $v$. We also recall that $\mathbf{B}$ is the infinite tree in which every vertex has two children, except the root which has only one.

We will now define a large family of events, whose probability will characterize the distribution of a random tree of the form $\mathbf{B}[H]$. Let $t$ be a finite binary tree (that is, a tree in which every vertex has $0$ or $2$ children, except the root which has exactly one). We write $V^*(t)$ for the set of vertices of $t$ that are not leaves and not $\rho$. Let $r>0$, and let $(a_v)_{v \in V^*(t)}$, $(b_v)_{v \in V^*(t)}$ be such that $0<a_v<b_v<r$ for every $v \in V^*(t)$. We write $\mathscr{A}_r^{t}(a,b)$ for the set of unbounded trees $\mathbf{T}$ (considered as metric spaces) such that $B_r(\mathbf{T})$ is of the form $t[h]$, where $h_{\rho}=0$, $h_v=r$ if $v$ is a leaf of $t$ and $a_v \leq h_v \leq b_v$ for every $v \in V^*(t)$.

In order to prove Theorem \ref{thm3_hbp}, we will estimate the probability that $\mathbf{T}^g(\hp)$ belongs to $\mathscr{A}_{r}^t(a,b)$. Unfortunately, for the reasons listed in of Section 4.1, the events $\{\mathbf{T}^g(X) \in \mathscr{A}_{r}^t(a,b) \}$ are not closed for the Gromov--Hausdorff distance. To compute $\P \left( \mathbf{T}^g(\hp) \in \mathscr{A}_{r}^t(a,b) \right)$ from our discrete estimates, we need to approximate the event $\{\mathbf{T}^g(X) \in \mathscr{A}_{r}^t(a,b) \}$ by closed events. Since such approximations are tedious to write down explicitly in the general case, we will focus on the case where $t=t_0$ is the binary tree with two leaves and one vertex of degree $3$. Note that $|V^*(t_0)|=1$, so $a$ and $b$ are just two real numbers.

Let $C> r$ and let $R \geq C+1$. If $\delta, \eps>0$, we write $\mathscr{A}_{r,C,R}^{\delta,\eps}(a,b)$ for the set of compact metric spaces $(X,d)$ satisfying the following property.
\begin{center}
"There are points $x_0$, $x_1$ and $x_2$ in $X$ and geodesics $\gamma_1$ (resp. $\gamma_2$) from $x_0$ to $x_1$ (resp. to $x_2$) such that:
\begin{itemize}
\item[(i)]
$d(\rho,x_0)=a$,
\item[(i)]
$d(\rho, x_1)=d(\rho, x_0)+ d(x_0,x_1)=r$ and $d(\rho, x_2)=d(\rho, x_0)+ d(x_0,x_2)=r$,
\item[(iii)]
for every $x \in X$ with $d(\rho, x)>C$, the distance $d(\rho,x)$ is equal to $d(\rho,x_1)+d(x_1,x)$ or to $d(\rho, x_2)+d(x_2,x)$ (it may be equal to both),
\item[(iv)]
there are two points $y,z \in X$ with $d(\rho, y) \geq R$ and $d(\rho, z) \geq R$ such that $d(\rho,y)=d(\rho, x_1)+d(x_1,y)$ and $d(\rho,z)=d(\rho, x_2)+d(x_2,z)$,
\item[(v)]
if $u \in \gamma_1$ and $v \in \gamma_2$ with $d(\rho, u) > b+2\eps$ and $d(\rho,v) > b+2\eps$, then $d(u,v) \geq \delta$."
\end{itemize}
\end{center}

We refer to the Appendix for the proof that this event is closed for the pointed Gromov--Hausdorff topology. More precisely, it is easy to check that $\mathscr{A}_{r,C,R}^{\delta,\eps}(a,b)$ is simply generated by geodesics as in Definition \ref{simplegeodesics}, so by Proposition \ref{simplegeodesics_closed} it is closed. By the convergence of $\frac{1}{n} \T_{\lambda_n}$ to $\hp$, we have
\begin{equation} \label{limsup}
\P \left( B_R(\hp) \in \mathscr{A}_{r,C,R}^{\delta,\eps}(a,b) \right) \geq \limsup_{n \to +\infty} \P \left( \frac{1}{n} B_{Rn} \left( \T_{\lambda_n} \right) \in \mathscr{A}_{r,C,R}^{\delta,\eps}(a,b) \right).
\end{equation}

We now try to estimate the right-hand side. By Lemma \ref{GHtree}, we have
\begin{equation}\label{discrete_tree_continuous_tree}
\lim_{n \to +\infty} \P \left( \frac{1}{n} \mathbf{T}^g_{\lambda_n} \in \mathscr{A}_{r}^{t_0}(a,b) \right) = \P \left( \mathbf{Y}_{2\sqrt{2}} \in \mathscr{A}_{r}^{t_0}(a,b) \right).
\end{equation}
Note that to deduce \eqref{discrete_tree_continuous_tree} from Lemma \ref{GHtree}, we need to show $\P \left( \mathbf{Y}_{2\sqrt{2}} \in \partial \mathscr{A}_{r}^{t_0}(a,b) \right)=0$, where $\partial \mathscr{A}_{r}^{t_0}(a,b)$ is the boundary of $\mathscr{A}_{r}^{t_0}(a,b)$ in the space of rooted metric trees, equipped with the local Gromov--Hausdorff distance. This is true because if $T \in \partial \mathscr{A}_{r}^{t_0}(a,b)$, then $T$ must have a branching point at height exactly $a$, $b$ or $r$, which a.s. does not happen.

If the event in the left-hand side of \eqref{discrete_tree_continuous_tree} occurs, let $x_0$ be the unique point of $\mathbf{T}^g_{\lambda_n}$ at height $an$, and let $x_1$ (resp. $x_2$) be the vertex on the left (resp. right) branch  of $\mathbf{T}^g_{\lambda_n}$ at height $rn$. Then the vertices $x_0$, $x_1$ and $x_2$ and the geodesics $\gamma_1$ and $\gamma_2$ joining $x_1$ and $x_2$ to $x_0$ in $\mathbf{T}^g_{\lambda_n}$ satisfy assumptions (i) and (ii) in the definition of $\mathscr{A}_{r,C,R}^{\delta,\eps}(a,b)$. Since the tree $\mathbf{T}^g_{\lambda_n}$ is infinite, they also satisfy assumption (iv) for any $R>0$.

We now fix $\eps>0$ and apply Lemma \ref{No_new_geodesics} (version (ii)) to the two strips $S^1_{\lambda_n}$ and $S^0[x_0]$ (the strip whose lowest point is $x_0$). Lemma \ref{No_new_geodesics} shows that there is $C>0$ such that, with probability at least $1-4 \eps$, for any $x$ in one of the two strips $S^1_{\lambda_n}$ and $S^0(x_0)$ such that $d(x,\rho)>Cn$, there is a geodesic from $\rho$ to $x$ that coincides with $\gamma_1$ between $\rho$ and $x_1$ or with $\gamma_2$ between $\rho$ and $x_2$. We claim that this is also the case if $x$ does not belong to one of these two strips. Indeed, a geodesic from $x$ to $\rho$ must hit the boundary of one of the two strips above $x_1$ or $x_2$ (cf. Figure \ref{x_out_strips}). Therefore, the probability that $\frac{1}{n} \mathbf{T}^g_{\lambda_n} \in \mathscr{A}_r^{t_0}(a,b)$ but assumption (iii) is not satisfied for $C$ is at most $4\eps$.

\begin{figure}
\begin{center}
\begin{tikzpicture}
\fill[orange!30] (3.5,0.75) to[bend left=15] (7,0.75)--(7,-1) to[bend left=15] (4,-1)--(2,0)--(3.5,0.75);
\fill[yellow!40] (3.5,0.75) to[bend right=15] (-3,1)--(-3,-1) to[bend right=15] (4,-1)--(2,0)--(3.5,0.75);

\draw[red, thick] (0,0)--(2,0);
\draw[red, thick] (2,0)--(3.5,0.75);
\draw(2,0)--(4,-1);
\draw(3.5,0.75) to[bend left=15] (7,0.75);
\draw(4,-1) to[bend right=15] (7,-1);
\draw(3.5,0.75) to[bend right=15] (-3,1);
\draw(4,-1) to[bend left=15] (-3,-1);
\draw[red, thick] (0.5,4) to[bend left=15] (1.5,1.24);
\draw[red, thick] (1.5,1.24) to[bend left=3] (3.5,0.75);

\draw[red, thick] (-2,0)--(-0.62,1.36);
\draw[red, thick] (-0.63,1.36) to[bend left=5] (1.5,1.24);

\draw[red, thick] (5,0)--(5,-1.2);
\draw[red, thick] (5,-1.2) to[bend left=2] (4,-1);
\draw[red, thick] (4,-1)--(2,0);

\draw(0,0)node{};
\draw(-0.5,0)node[texte]{$\rho$};
\draw(2,0)node{};
\draw(2,-0.3)node[texte]{$x_0$};
\draw(3,0.5)node{};
\draw(2.7,0.7)node[texte]{$x_1$};
\draw(3,-0.5)node{};
\draw(2.7,-0.7)node[texte]{$x_2$};
\draw(0.5,4)node{};
\draw(0.8,4,0)node[texte]{$x$};
\draw(-2,0)node{};
\draw(-2,-0.3)node[texte]{$x$};
\draw(5,0)node{};
\draw(5,0.3)node[texte]{$x$};
\draw(-1,-1.1)node[texte]{$S^1_{\lambda}$};
\draw(6.3,-0.1)node[texte]{$S^0[x_0]$};
\end{tikzpicture}
\end{center}
\caption{If $x \notin S^1_{\lambda} \cup S^0[x_0]$, then any geodesic from $x$ to $\rho$ must cross the boundary of one of the two strips $S^1_{\lambda}$ and $S^0[x_0]$ above $x_1$ or $x_2$. Therefore, for every $x$ with $d(\rho, x) \geq Cn$, we have a geodesic (in red) from $x$ to $\rho$ that passes through $x_1$ or $x_2$.} \label{x_out_strips}
\end{figure}
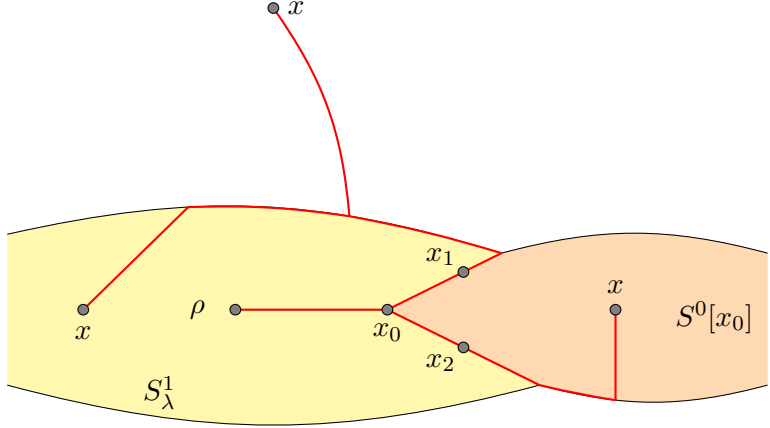

Finally, if $\frac{1}{n} \mathbf{T}^g_{\lambda_n} \in \mathscr{A}_r^{t_0}(a,b)$ but assumption (v) is not satisfied for some $0<\delta<\eps$, there are two vertices $v_1$ on $\gamma_1$ and $v_2$ on $\gamma_2$, at distance from $\rho$ between $(b+2\eps)n$ and $rn$, such that $d_{\T_{\lambda_n}}(v_1, v_2) < \delta n$. A geodesic from $v_1$ to $v_2$ must cross either $S_{\lambda_n}^1$ or $S^0[x_0]$ and, since $\delta<\eps$, it must stay at distance at least $(b+\eps)n$ from $\rho$. Therefore, there are two vertices $v'_1$ on $\gamma_1$ and $v'_2$ on $\gamma_2$, at distance from $\rho$ between $(b+\eps)n$ and $(r+\eps)n$, such that $d_{S^1_{\lambda_n}}(v'_1, v'_2) < \delta n$ or $d_{S^0[x_0]}(v'_1, v'_2) < \delta n$. By applying Lemma \ref{nocollision} to $S^1_{\lambda_n}$ between heights $b+\eps$ and $r+\eps$, and to $S^0[x_0]$ between heights $\eps$ and $r+\eps$, we can find $\delta>0$ such that this occurs with probability at most $2 \eps$. Hence, for every $\eps>0$, there is $\delta>0$ such that the probability that $\frac{1}{n} \mathbf{T}^g_{\lambda_n} \in \mathscr{A}_r^{t_0}(a,b)$ but assumption (v) is not satisfied is at most $2\eps$.

Therefore, for all $\eps>0$, there are $C>r$ and $\delta>0$ such that, for any $R \geq C+1$,
\[ \liminf_{n \to +\infty} \P \left( \frac{1}{n} B_{Rn} \left( \T_{\lambda_n} \right) \in \mathscr{A}_{r,C,R}^{\delta,\eps}(a,b) \right) \geq \P \left( \mathbf{Y}_{2\sqrt{2}} \in \mathscr{A}_{r}^{t_0}(a,b) \right)-6\eps, \]
so by \eqref{limsup},
\[ \P \left( B_{R} \big( \hp \big) \in \mathscr{A}_{r,C,R}^{\delta,\eps}(a,b) \right) \geq \P \left( \mathbf{Y}_{2\sqrt{2}} \in \mathscr{A}_{r}^{t_0}(a,b) \right)-6\eps. \]
Since the event $\left\{ B_R(X) \in \mathscr{A}_{r,C,R}^{\delta,\eps}(a,b) \right\}$ is nonincreasing in $R$, we obtain that for any $\eps>0$, we have
\[ \P \left( \exists C>r, \exists \delta>0, \forall R \geq C+1, B_{R} \big( \hp \big) \in \mathscr{A}_{r,C,R}^{\delta,\eps}(a,b) \right) \geq \P \left( \mathbf{Y}_{2\sqrt{2}} \in \mathscr{A}_{r}^{t_0}(a,b) \right)-6\eps. \]
Finally, the event above is increasing in $\eps$ so 
\begin{equation}\label{4quantifs}
\P \left( \forall \eps>0, \exists \delta>0, \exists C>0, \forall R \geq C+1, B_{R} \big( \hp \big) \in \mathscr{A}_{r,C,R}^{\delta,\eps}(a,b) \right) \geq \P \left( \mathbf{Y}_{2\sqrt{2}} \in \mathscr{A}_{r}^{t_0}(a,b) \right).
\end{equation}

\begin{lem}\label{deterministic}
Almost surely, if
\[\forall \eps>0, \exists \delta>0, \exists C>0, \forall R \geq C+1, B_{R} \big( \hp \big) \in \mathscr{A}_{r,C,R}^{\delta,\eps}(a,b),\]
then $\mathbf{T}^g(\hp) \in \mathscr{A}_r^{t_0}(a,b)$.
\end{lem}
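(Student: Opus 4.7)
The approach would be, for each small $\varepsilon>0$, to apply the hypothesis, let $R\to\infty$ to produce limiting objects in $\hp$ itself, and use uniqueness of geodesics through points of $\mathbf{T}^g(\hp)$ to identify the structure of $\mathbf{T}^g(\hp)\cap B_r(\hp)$. In the end, letting $\varepsilon\to 0$ will tighten the bound on the branching height from $[a,b+2\varepsilon]$ down to $[a,b]$.

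Concretely, I would fix $\varepsilon\in(0,(r-b)/2)$, let $\delta=\delta_\varepsilon$ and $C=C_\varepsilon$ be given by the hypothesis, and for each $R\geq C+1$ consider the witnessing configuration $(x_0^R,x_1^R,x_2^R,\gamma_1^R,\gamma_2^R,y^R,z^R)$ in $B_R(\hp)$. Since $x_0^R,x_1^R,x_2^R$ and the geodesics $\gamma_i^R$ all live in the compact ball $B_r(\hp)$, and the geodesics from $\rho$ to $y^R,z^R$ (through $x_1^R,x_2^R$, by condition (iv)) are $1$-Lipschitz, a compactness and Arzelà--Ascoli argument combined with a diagonal extraction yields, along a subsequence $R_k\to+\infty$, limits $x_0,x_1,x_2\in B_r(\hp)$ and two infinite geodesic rays $\beta_1,\beta_2$ of $\hp$ emanating from $\rho$, both passing through $x_0$, with $\beta_i(r)=x_i$. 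Passing properties (i), (ii) and (v) to the limit yields $a\leq d(\rho,x_0)\leq b$, $d(\rho,x_1)=d(\rho,x_2)=r$, and $d(x_1,x_2)\geq\delta>0$; in particular $x_1,x_2\in\mathbf{T}^g(\hp)$ and $x_1\neq x_2$.

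The crux is then to show that every infinite geodesic ray of $\hp$ from $\rho$ passes through $x_1$ or $x_2$. I would take such a ray $\gamma$, choose $s>C$, and exploit property (iii) of the discrete configurations: for each $R_k>s$ there is an index $i(k)\in\{1,2\}$ with $d(\rho,\gamma(s))=d(\rho,x_{i(k)}^{R_k})+d(x_{i(k)}^{R_k},\gamma(s))$; along a sub-subsequence on which $i(k)$ is constant (say $i(k)=1$) and for which the associated geodesics converge, one obtains a geodesic $\alpha$ from $\rho$ to $\gamma(s)$ passing through $x_1$. Concatenating $\alpha$ with $\gamma|_{[s,\infty)}$ produces an infinite geodesic ray $\tilde\gamma$ through $x_1$ that coincides with $\gamma$ on $[s,\infty)$; Proposition~\ref{no-o-}, transferred to $\hp$ via the absolute continuity relations of \cite{B16}, applied at the point $\gamma(s+1)$ forces $\tilde\gamma=\gamma$, so $\gamma$ itself passes through $x_1$.

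Since every ray of $\mathbf{T}^g(\hp)$ then goes through $x_1$ or $x_2$, the set of points of $\mathbf{T}^g(\hp)$ at height $r$ is exactly $\{x_1,x_2\}$ (and hence does not depend on $\varepsilon$); by uniqueness of geodesics from $\rho$ to any point of $\mathbf{T}^g(\hp)$ (discussed in Section~4.1), it follows that $\mathbf{T}^g(\hp)\cap B_r(\hp)$ is the union of the two unique geodesic segments $[\rho,x_1]$ and $[\rho,x_2]$, sharing an initial portion up to a single branching point $p$. Since $x_0$ lies on both $\beta_1$ and $\beta_2$, one has $d(\rho,p)\geq d(\rho,x_0)\geq a$; and since the limit of property (v) forbids $\beta_1,\beta_2$ from meeting above height $b+2\varepsilon$, one also has $d(\rho,p)\leq b+2\varepsilon$. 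As $p$ is intrinsic to $\mathbf{T}^g(\hp)$ and independent of $\varepsilon$, letting $\varepsilon\to 0$ yields $d(\rho,p)\in[a,b]$, hence $\mathbf{T}^g(\hp)\in\mathscr{A}_r^{t_0}(a,b)$. The hardest step will be the use of Proposition~\ref{no-o-} to upgrade the mere existence of a geodesic through $x_i$ into the statement that $\gamma$ itself passes through $x_i$: this is exactly what rules out spurious limiting rays that do not correspond to leftmost discrete geodesics.
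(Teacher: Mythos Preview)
Your proof is correct and uses the same essential ingredient as the paper's: Proposition~\ref{no-o-} (transferred to $\hp$) forces every sufficiently long geodesic from $\rho$ to pass through $x_1$ or $x_2$, which pins down $\mathbf{T}^g(\hp)$ at height $r$ and makes these points parameter-independent.

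The route differs slightly. You fix $\varepsilon$, extract subsequential limits of the witnessing configurations as $R\to\infty$ (via compactness of $B_r(\hp)$ and Arzel\`a--Ascoli), obtain limit points $x_0,x_1,x_2$ and infinite rays $\beta_1,\beta_2$ directly from the geodesics to $y^R,z^R$, and only then show that $\{x_1,x_2\}$ is intrinsically the set of height-$r$ points of $\mathbf{T}^g(\hp)$, hence independent of $\varepsilon$. The paper instead observes up front that, for any fixed $C$, condition~(iii) together with Proposition~\ref{no-o-} characterises $x_1,x_2$ as the \emph{only} points at distance $r$ from $\rho$ lying on a geodesic of length $C+1$; this immediately shows the witnessing points are canonical (independent of $R,\delta,\varepsilon$ and even $C$), so no extraction is needed. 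The paper then gets the infinite rays from condition~(iv) with $R\to\infty$ and a simple compactness argument. Your approach is a bit more hands-on; the paper's is a bit slicker in that it identifies the canonical nature of $x_1,x_2$ first and avoids carrying a subsequence. Both reach the branching-height bound $[a,b]$ by letting $\varepsilon\to0$ in the consequence of~(v).
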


\begin{proof}
Fix $C$, $\delta$ and $\eps$ and assume that $B_R(\hp) \in \mathscr{A}_{r,C,R}^{\delta,\eps}(a,b)$ for any $R \geq C+1$. Let $x_0$, $x_1$ and $x_2$ be given by the definition of $\mathscr{A}_{r,C,R}^{\delta,\eps}(a,b)$. We first check that these points do not depend on the parameters $\delta$, $\eps$, $C$ and $R$. By assumption (iv), the points $x_1$ and $x_2$ lie on geodesics of length $C+1$ started from $\rho$. We claim they are the only two points at distance $r$ from $\rho$ satisfying this property. Indeed, if $y$ is a point with $d(\rho,y)=C+1$ and $\gamma$ a geodesic from $\rho$ to $y$, let $z$ be the point of $\gamma$ such that $d(\rho, z)=C$. By assumption (iii) and the fact that $\hp$ is a length space, there is a geodesic $\gamma'$ from $z$ to $\rho$ passing through $x_1$ or $x_2$. By concatenating $\gamma'$ from $\rho$ to $z$ and $\gamma$ from $z$ to $y$, we obtain a geodesic from $\rho$ to $y$ that coincides with $\gamma$ between $z$ and $y$. By Proposition \ref{no-o-}, this geodesic must be equal to $\gamma$, so $\gamma$ must pass through $x_1$ or $x_2$.

Hence, if $B_R(\hp) \in \mathscr{A}_{r,C,R}^{\delta,\eps}(a,b)$, then $x_1$ and $x_2$ are the only two points at distance $r$ from $\rho$ that lie on a geodesic of length $C+1$ started from $\rho$. In particular, they do not depend on $\delta$, $\eps$ and $R$. Moreover, let $C' \geq C$, and assume that $B_R(\hp) \in \mathscr{A}_{r,C,R}^{\delta,\eps}(a,b)$ and $B_R(\hp) \in \mathscr{A}_{r,C',R}^{\delta,\eps}(a,b)$ for all $R \geq C'$. Then the two points at distance $r$ from $\rho$ that lie on a geodesic of length $C'+1$ from $\rho$ are the same as the two points that lie on a geodesic of length $C+1$ started from $\rho$, so the points $x_1$ and $x_2$ do not depend on $C$. Similarly, the point $x_0$ is the only point at distance $a$ from the root that lies on a geodesic of length $C+1$, so it does not depend on $\delta, \eps, C, R$. Hence, we can find $x_0$, $x_1$ and $x_2$ in $\hp$ and $\gamma_1, \gamma_2$ such that:
\begin{itemize}
\item
assumptions (i) and (ii) in the definition of $\mathscr{A}_{r,C,R}^{\delta,\eps}(a,b)$ are satisfied,
\item
there is $C>0$ such that assumption (iii) is satisfied,
\item
for every $R \geq C+1$, assumption (iv) is satisfied,
\item
for every $\eps>0$, there is $\delta>0$ such that assumption (v) is satisfied, which means that the geodesics from $\rho$ to $x_1$ and $x_2$ are disjoint between heights $b$ (excluded) and $r$ (included).
\end{itemize}
Since assumption (iv) is satisfied for any $R$ large enough, there are arbitrarily large geodesics started from $\rho$ passing through $x_1$. By a compactness argument, there are infinite geodesics started from $\rho$ and passing through $x_1$, and the same is true for $x_2$. By assumption (iii), the points $x_1$ and $x_2$ are the only ones with this property. By assumptions (i) and (v), the branching point between the geodesics from $\rho$ to $x_1$ and $x_2$ lies between heights $a$ and $b$, so $\mathbf{T}^g(\hp) \in \mathscr{A}_r^{t_0}(a,b)$.
%Compactness argument : apply the Ascoli theorem to functions from $\R^+$ to the locally compact space $\hp$, equipped with the topology of uniform convergence on any compact subset. The functions we consider are geodesics, so they are equicontinuous.
\end{proof}

The end of the proof of Theorem \ref{thm3_hbp} is now easy. By Lemma \ref{deterministic} and \eqref{4quantifs}, we get
\[ \P \left( \mathbf{T}^g(\hp) \in \mathscr{A}_r^{t_0}(a,b) \right) \geq \P \left( \mathbf{Y}_{2\sqrt{2}} \in \mathscr{A}_r^{t_0}(a,b) \right).\]
The general case for $t$ can be treated along the same lines. This shows that the distribution of $\mathbf{T}^g(\hp)$ dominates that of $\mathbf{Y}_{2\sqrt{2}}$. Since they are both probability measures, they are the same.

\appendix

\section{A Gromov--Hausdorff closedness result}

The goal of this appendix is to prove Proposition \ref{simplegeodesics_closed}. It shows that a wide class of events related to geodesics are closed for the Gromov--Hausdorff distance. We believe it might be of interest in other settings. We write $\mathscr{G}$ for the space of pointed compact metric spaces, equipped with the Gromov--Hausdorff distance. We will be interested in some events depending on a metric space $(X,d,\rho) \in \mathscr{G}$.

\begin{defn}\label{simplepoints}
We say that a subset $\mathscr{A}$ of $\mathscr{G}$ is \emph{simply generated by points} if it has the following form.
Let $k \geq 1$, and let $F \subset \R^{(k+2)^2}$ be closed. Then $\mathscr{A}$ is the set of those $(X,d,\rho) \in \mathscr{G}$ for which there are $(x_i)_{0 \leq i \leq k}$ in $X$ with $x_0=\rho$ such that, for any $x_{k+1} \in X$, the matrix
\[\left( d(x_i,x_j) \right)_{0 \leq i, j \leq k+1} \]
lies in $F$.
\end{defn}

A simple example of such a subset would be the set of metric spaces that can be covered by $k$ balls of radius $r$ for some fixed $k \geq 1$ and $r$.

\begin{lem}\label{simplepoints_closed}
Any subset of $\mathscr{G}$ that is simply generated by points is closed.
\end{lem}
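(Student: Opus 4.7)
The plan is to argue by sequential compactness, using the standard characterization of Gromov--Hausdorff convergence via correspondences. Suppose $(X_n, d_n, \rho_n) \in \mathscr{A}$ converges in $\mathscr{G}$ to $(X, d, \rho)$. For each $n$, fix a correspondence $R_n \subset X_n \times X$ with $(\rho_n, \rho) \in R_n$ and distortion $\eps_n \to 0$, and fix witnesses $x_0^n = \rho_n, x_1^n, \dots, x_k^n \in X_n$ such that for every $x_{k+1}^n \in X_n$ the matrix $\left( d_n(x_i^n, x_j^n) \right)_{0 \leq i,j \leq k+1}$ lies in $F$. For each $0 \leq i \leq k$ and each $n$, pick $y_i^n \in X$ with $(x_i^n, y_i^n) \in R_n$. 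Since $X$ is compact, we may extract a subsequence (still indexed by $n$) along which $y_i^n \to y_i \in X$ for every $i$. The matching $(\rho_n, \rho), (\rho_n, y_0^n) \in R_n$ forces $d(\rho, y_0^n) \leq \eps_n \to 0$, hence $y_0 = \rho$.

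The next step is to verify that $(y_i)_{0 \leq i \leq k}$ witness the property in $X$: given any $x_{k+1} \in X$, use the correspondence to choose $x_{k+1}^n \in X_n$ with $(x_{k+1}^n, x_{k+1}) \in R_n$. By hypothesis, the matrix $M^n := \left( d_n(x_i^n, x_j^n) \right)_{0 \leq i,j \leq k+1}$ belongs to $F$. Setting $y_{k+1}^n := x_{k+1}$ (constant in $n$), the defining property of $R_n$ yields
\[ \left| M^n_{ij} - d(y_i^n, y_j^n) \right| \leq \eps_n \quad \text{for all } 0 \leq i, j \leq k+1. \]
Letting $n \to \infty$ along the extracted subsequence, $d(y_i^n, y_j^n) \to d(y_i, y_j)$ with the convention $y_{k+1} := x_{k+1}$. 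Therefore $M^n$ converges to $\left( d(y_i, y_j) \right)_{0 \leq i, j \leq k+1}$, and since $F$ is closed this limit matrix lies in $F$. As $x_{k+1}$ was arbitrary, this shows $(X, d, \rho) \in \mathscr{A}$.

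I do not foresee a serious obstacle: the only delicate point is to first extract the subsequence making the witnesses $y_i^n$ converge (which is legitimate because there are only finitely many of them and $X$ is compact), and only \emph{then} to pick, for each candidate $x_{k+1} \in X$, the corresponding $x_{k+1}^n$ along the extracted subsequence. Since $F$ sits in finite-dimensional Euclidean space, componentwise convergence of the matrices suffices to invoke the closedness of $F$, and no uniform control in $x_{k+1}$ is needed.
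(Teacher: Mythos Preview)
Your proof is correct and follows essentially the same approach as the paper's: both extract subsequential limits of the witnesses in $X$ and then, for each candidate $x_{k+1}$, pull back to $X_n$ and use closedness of $F$. The only cosmetic difference is that the paper phrases the argument via a common isometric embedding of $X$ and the $X_n$ into a space $(Z,d_Z)$ with vanishing Hausdorff distance, whereas you use the equivalent language of correspondences with vanishing distortion.
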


\begin{proof}
Assume that $\mathscr{A}$ is simply generated by points and let $k$ and $F$ be as above. Let $(X_n, d_n, \rho_n)$ converge to a space $(X,d,\rho)$ with $X_n \in \mathscr{A}$ for every $n$. By Gromov--Hausdorff convergence, we can embed $X$ and all the $X_n$ isometrically in a space $(Z,d_Z)$ such that the Hausdorff distance $D_n$ between $X_n$ and $X$ goes to $0$.

For every $n$, let $x_0^n, \dots, x_k^n \in X_n$ satisfy the condition given by Definition \ref{simplepoints}. We take $y_0^n, \dots, y_k^n \in X$ such that $d_Z(x_i^n, y_i^n) \leq 2D_n$. For all $0 \leq i \leq k$, let $y_i$ be a subsequential limit of $(y_i^n)_{n \geq 0}$ in $X$ (which exists by compactness). To complete the proof that $X \in \mathscr{A}$, all we need to show is that $y_0=\rho$ and that for any $y_{k+1} \in X$, we have
\[\left( d(y_i,y_j) \right)_{0 \leq i,j \leq k+1} \in F.\]
The first point is easy because the distances $d_Z(\rho, \rho_n)$, $d_Z(\rho_n, y_0^n)$ and $d(y_0^n,y_0)$ all go to $0$ along some subsequence. Moreover, let $y_{k+1} \in X$. There is $x_{k+1}^n \in X_n$ such that $d_Z(x_{k+1}^n, y_{k+1}) \leq 2D_n$. For every $0 \leq i,j \leq k+1$, we then have
\[d(y_i,y_j)=\lim_{n \to +\infty} d(x_i^n, x_j^n)\]
along some subsequence.
But we know that for all $n$ we have $\left( d(x_i^n,x_j^n) \right)_{0 \leq i,j \leq k+1} \in F$, so we can conclude.
\end{proof}

\begin{defn}\label{simplegeodesics}
We say that a subset $\mathscr{A}$ of $\mathscr{G}$ is \emph{simply generated by geodesics} if it has the following form. Let $k \geq 1$, and let $F \subset \R^{(2k+2)^2}$ be closed. Then $\mathscr{A}$ is the set of those $(X,d,\rho) \in \mathscr{G}$ for which there are $(x_i)_{0 \leq i \leq k}$ in $X$ with $x_0=\rho$ and geodesics $(\gamma_i)_{1 \leq i \leq k}$ from $\rho$ to $x_i$, satisfying the following property. For any $(x_{k+i})_{1 \leq i \leq k}$ such that $x_{k+i} \in \gamma_i$ for every $i$, and for every $x_{2k+1} \in X$, the matrix
\[\left( d(x_i,x_j) \right)_{0 \leq i,j \leq 2k+1} \]
lies in $F$.
\end{defn}

For example, the events studied in Section \ref{subsec:GH_closed} are simply generated by geodesics. A simpler example of such an event would be "there are $k$ geodesics such that any point lies at distance at most $r$ from one of these geodesics".

\begin{prop}\label{simplegeodesics_closed}
Any subset of $\mathscr{G}$ that is simply generated by geodesics is closed.
\end{prop}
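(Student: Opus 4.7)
The plan is to imitate the proof of Lemma \ref{simplepoints_closed}, adding one extra step to extract subsequential limits of the geodesics. Suppose $(X_n, d_n, \rho_n) \to (X, d, \rho)$ in the pointed Gromov--Hausdorff sense with each $X_n \in \mathscr{A}$. As in the proof of Lemma \ref{simplepoints_closed}, embed all the $X_n$ and $X$ isometrically and base-point-preservingly in a common metric space $(Z, d_Z)$ in which the Hausdorff distance $D_n$ between $X_n$ and $X$ tends to $0$, and (upon replacing $Z$ by the closure of $X \cup \bigcup_n X_n$) we may assume $Z$ is compact. For each $n$, let $x_0^n, \dots, x_k^n \in X_n$ and geodesics $\gamma_i^n \colon [0, L_i^n] \to X_n$ with $L_i^n = d_n(\rho_n, x_i^n)$ realize the defining condition of $\mathscr{A}$. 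Extract (along a subsequence) limits $x_i^n \to x_i \in X$; then $L_i^n \to L_i := d(\rho, x_i)$, and $x_0 = \rho$ as in Lemma \ref{simplepoints_closed}.

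The new ingredient is an Arzel\`a--Ascoli argument to extract subsequential limits of the $\gamma_i^n$. Reparameterize each geodesic as $\widetilde{\gamma}_i^n \colon [0, L_i] \to Z$ by $\widetilde{\gamma}_i^n(t) = \gamma_i^n(\min(t, L_i^n))$; this is $1$-Lipschitz on $[0, L_i^n]$ and constant afterwards, with the final plateau of length $|L_i - L_i^n|$ shrinking to $0$. Since $Z$ is compact, a diagonal extraction yields a subsequence along which each $\widetilde{\gamma}_i^n$ converges uniformly to a $1$-Lipschitz map $\gamma_i \colon [0, L_i] \to Z$. Because $d_Z(\widetilde{\gamma}_i^n(t), X) \leq D_n \to 0$ and $X$ is closed in $Z$, in fact $\gamma_i$ takes values in $X$. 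By construction $\gamma_i(0) = \rho$, $\gamma_i(L_i) = x_i$, and $\gamma_i$ is $1$-Lipschitz of total displacement $L_i$, so $\gamma_i$ is a genuine geodesic from $\rho$ to $x_i$ in $X$.

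It remains to verify that these points and geodesics satisfy the condition defining $\mathscr{A}$. Fix arbitrary $y_{k+i} = \gamma_i(t_i) \in \gamma_i$ for $1 \leq i \leq k$ and arbitrary $y_{2k+1} \in X$. Choose $t_i^n \in [0, L_i^n]$ with $t_i^n \to t_i$ (possible since $L_i^n \to L_i$), so that $\gamma_i^n(t_i^n) = \widetilde{\gamma}_i^n(t_i^n) \to \gamma_i(t_i) = y_{k+i}$ uniformly, and pick $y_{2k+1}^n \in X_n$ with $d_Z(y_{2k+1}^n, y_{2k+1}) \to 0$. Setting $x_{k+i}^n = \gamma_i^n(t_i^n)$ and $x_{2k+1}^n = y_{2k+1}^n$, the matrix $(d_n(x_i^n, x_j^n))_{0 \leq i,j \leq 2k+1}$ lies in $F$ for every $n$, and it converges to the corresponding matrix of distances between the $x_i$, $y_{k+i}$ and $y_{2k+1}$ in $X$ (this uses that the embeddings are isometric). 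Since $F$ is closed, the limit matrix also belongs to $F$, proving $X \in \mathscr{A}$.

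The only mildly delicate step is the Arzel\`a--Ascoli extraction of the geodesics, where one must deal with the fact that the domains $[0, L_i^n]$ vary with $n$ and that the maps take values in varying sets $X_n$ rather than in $X$; the reparameterization above together with the bound $d_Z(\cdot, X) \leq D_n$ handles both issues simultaneously. Everything else is a routine adaptation of the proof of Lemma \ref{simplepoints_closed}.
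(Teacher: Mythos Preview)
Your proof is correct and takes a somewhat different route from the paper's. The paper reduces Proposition \ref{simplegeodesics_closed} to Lemma \ref{simplepoints_closed} by discretizing: for each $\ell \geq 0$ it replaces geodesics by $\ell$-geodesic chains (dyadic sequences of $2^\ell+1$ points with the right pairwise distances), observes that the resulting event $\mathscr{A}^\ell$ is simply generated by points and hence closed, and then shows $\mathscr{A}=\bigcap_{\ell\geq 0}\mathscr{A}^\ell$ via a diagonal extraction at dyadic times. You instead rerun the embedding argument of Lemma \ref{simplepoints_closed} directly and add an Arzel\`a--Ascoli step to extract limiting geodesics in one shot. Your approach is more self-contained and arguably more natural; the paper's approach has the virtue of being a clean black-box reduction to the already-proved lemma, so that the embedding only has to be written out once. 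One small point worth making explicit in your write-up: the compactness of $Z$ (needed for Arzel\`a--Ascoli) follows because $X\cup\bigcup_n X_n$ is totally bounded when $D_n\to 0$, so its closure in the completion of $Z$ is compact.
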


To go from Lemma \ref{simplepoints_closed} to Proposition \ref{simplegeodesics_closed}, we will need the following definition.

\begin{defn}
Let $\ell \geq 0$, and let $x,y$ be two points of a metric space $(X,d)$. An \emph{$\ell$-geodesic chain from $x$ to $y$} is a finite sequence $(x(i))_{0 \leq i \leq 2^{\ell}}$ of points of $X$ such that
\begin{enumerate}[(i)]
\item
$x(0)=x$ and $x(2^{\ell})=y$,
\item
$d(x(i),x(i+1))=\frac{1}{2^{\ell}} d(x,y)$ for any $0 \leq i \leq 2^{\ell}-1$.
\end{enumerate}
\end{defn}

\begin{proof}[Proof of Proposition \ref{simplegeodesics_closed}]
Let $\mathscr{A}$ be a subset of $\mathscr{G}$ that is simply generated by geodesics. For $\ell>0$, we write $\mathscr{A}^{\ell}$ for the subset of $\mathscr{G}$ we obtain if we replace continuous geodesics by $\ell$-geodesic chains in Definition \ref{simplegeodesics}. Then $\mathscr{A}^{\ell}$ is simply generated by points (because the conditions in the definition of an $\ell$-geodesic chain are closed), so $\mathscr{A}^{\ell}$ is closed by Lemma \ref{simplepoints_closed}. Hence, to conclude, it is enough to show
\[ \mathscr{A}=\bigcap_{\ell \geq 0} \mathscr{A}^{\ell}.\]
The inclusion from left to right is immediate since any continuous geodesic contains an $\ell$-geodesic chain. Now let $(X,d,\rho) \in \bigcap_{\ell \geq 0} \mathscr{A}^{\ell}$. For every $\ell \geq 0$ and $1 \leq i \leq k$, let $(x_i^{\ell})$ in $X$ and let $\left( \gamma_i^{\ell} (j) \right)_{0 \leq j \leq 2^{\ell}}$ be $\ell$-geodesic chains from $\rho$ to $x_i^{\ell}$ satisfying the assumptions of definition \ref{simplegeodesics}. Up to extraction, we may assume that for every $1 \leq i \leq k$, the points $x_i^{\ell}$ converge to a point $x_i \in X$. Up to further extraction, by a diagonal argument, for every $t$ of the form $\frac{j}{2^{m}}$ with $0 \leq j \leq 2^m$, the sequence $\left( \gamma_i^{m+{\ell}}(2^{\ell} j) \right)_{\ell \geq 0}$ converges to a point $\gamma_i(t)$. Moreover, for all such $t,t'$, we have $d(\gamma_i(t),\gamma_i(t'))=|t-t'|d(\rho,x_i)$, so we can extend $\left( \gamma_i(t) \right)_{0 \leq t \leq 1, t=j/2^m}$ to a continuous geodesic from $\rho$ to $x_i$. It is then easy to check that the geodesics $\gamma_i$ satisfy the required hypothesis.
\end{proof}

\bibliographystyle{abbrv}
\bibliography{bibli_curien}

\end{document}